\documentclass[a4paper,12pt,]{article}
\usepackage{hyperref}
\usepackage{amsfonts}
\usepackage{amsmath}
\usepackage{amssymb}
\usepackage{amsthm}
\usepackage{enumerate}
\usepackage[mathscr]{eucal}
\usepackage{eqlist}
\usepackage{graphicx}
\usepackage{esint}
\usepackage{color}
\usepackage{booktabs} 
\usepackage{multirow} 

\usepackage[body={15.6true cm, 24.9true cm}]{geometry}

\newtheorem{theorem}{\hskip\parindent\bf Theorem}[section]
\newtheorem{lemma}{\hskip\parindent\bf Lemma}[section]
\newtheorem{proposition}{\bf Proposition}[section]
\newtheorem{remark}{\hskip\parindent\bf Remark}[section]
\newtheorem{definition}{\hskip\parindent\bf Definition}[section]
\numberwithin{equation}{section}
\numberwithin{equation}{section} \allowdisplaybreaks
\usepackage{mathdots}

\renewcommand\abstract{{\bf Abstract}}

\begin{document}

\title {\LARGE \bf Far field refraction problem with loss of energy in negative refractive index material}

\author{{Haokun Sui$^{a}$,~~Feida Jiang$^{a,b,}$\thanks{Corresponding author.
E-mail address: jiangfeida@seu.edu.cn}}}
\maketitle
\begin{center}
\begin{minipage}{12cm}
\begin{description}
\item \small
 $a$.~School of Mathematics and Shing-Tung Yau Center of Southeast University, Southeast University, Nanjing 211189, P.R. China
 \item \small
$b$.~Shanghai Institute for Mathematics and Interdisciplinary Sciences, Shanghai 200433, P.R. China
\end{description}
\end{minipage}
\end{center}


	

\begin{abstract}{\bf:}{\footnotesize
~This paper studies the far field refraction problem in negative refractive index material with loss of energy, which is a remaining problem in E. Stachura, Nonlinear Anal. 2017;157:76-103. The analysis is divided into two cases according to the relative refractive index $\kappa$, that is, $\kappa<-1$ and $-1<\kappa<0$. For each case, we use the Minkowski method to establish the existence of the weak solution when the target measure is either discrete or a finite Radon measure. Eventually, the inequality involving a Monge-Amp\`ere type operator satisfied by the solution of the problem is derived, which is useful to understand this complex optical phenomenon.}		
\end{abstract}
	
{\bf Key Words:} Negative Refraction; Far Field; Weak Solution; Loss of Energy; Monge-Amp\`ere Type Operator
	
{{\bf 2010 Mathematics Subject Classification.} Primary: 35Q60, 78A05; Secondary: 35J96.}

\tableofcontents

\section{Introduction}\label{Section 1}

\subsection{Background}

Negative refraction was first proposed by the Russian scientist Veselago in 1968 \cite{Ve68}, referring to the phenomenon that when a light wave is incident from a material with a positive refractive index to the interface of a material with a negative refractive index, the refraction of the light wave is opposite to conventional refraction, with the incident and refracted waves located on the same side of the interface normal. Negative refractive index materials are artificially structured materials with both permittivity $\epsilon$ and permeability $\mu$ are negative. In such materials, the electric vector, magnetic vector, and wave vector of electromagnetic waves form a left-handed system, hence they are also called ``left-handed material''. While traditional materials have positive refractive indices, the unique properties of negative refractive index materials give them disruptive potential in fields like optics and electromagnetism.

In 2000, Smith et al. \cite{SP00} artificially synthesized the world's first medium with negative equivalent permittivity and permeability in the microwave range using copper-based composite materials. In~2001, Shelby et al. \cite{SS01} made a prism from existing negative refractive index materials, experimentally confirming negative refraction and showing that light incident on a negative refractive index medium surface refracts to the same side of the interface normal as the incident light. Since the beginning of 21st century, negative refractive index materials have been widely used in optical invisibility \cite{CT05}, perfect lens imaging \cite{PR02,Pen00}, wireless directional radiation \cite{ET02} and manufacturing of novel optical devices such as high-capacity optical discs \cite{LH04}.

In recent years, near field refraction and far field refraction problems have been widely researched mathematically. Near field refraction problem refers that given two media \textsc{I} and \textsc{II}, and a light source in medium \textsc{I}, constructing a refracting surface $\mathcal{R}$ separating media \textsc{I} and \textsc{II}, such that all ray emitted from the light source refract through $\mathcal{R}$ to a specified point $P$ in medium \textsc{II}. In 2014, Gutiérrez and Huang \cite{GH14} studied the single surface near field refraction problem in positive refractive index media without loss of energy. The existence of weak solutions to the refraction problem was proved by using Minkowski method and the corresponding partial differential equation was also derived. For related research on near field refraction problem, see \cite{GM21,GSA18, GT14, GT15, GT19}. Far field refraction problem refers that given two media \textsc{I} and \textsc{II}, and a light source in medium \textsc{I}, constructing a refracting surface $\Gamma$ separating media \textsc{I} and \textsc{II}, such that all ray emitted from the light source refract through $\Gamma$ to a specified direction $m$ in medium \textsc{II}. In 2009, Gutiérrez and Huang \cite{GH09} studied the single surface far field refraction problem in positive refractive index media without loss of energy. They used the optimal transportation method to prove the existence of weak solutions of this problem, derived the corresponding partial differential equation and verified that the equation satisfies the A3 condition in \cite{MTW05}. In 2017, Gutiérrez and Sabra \cite{GS18} studied the double surface far field refraction problem in positive refractive index media without loss of energy. They proved that given a lower surface, there exists an upper surface that satisfies the Monge-Amp\`ere type equation which can refract parallel light in a given direction. There are also some other research on far field refraction, see \cite{AF24,DG17,GH19,GT11}.

In 2015, Stachura and Gutiérrez \cite{GS15} first studied the refraction problem in negative refractive index media mathematically. They proposed the Snell's law for negative refractive index material and extending the near field and far field refraction problems from positive refractive index media to negative refractive index media. Later in 2016, Stachura and Gutiérrez \cite{GS16} further generalized their previous work by investigating double refraction in both near and far fields in negative refractive index media. In 2017, Stachura \cite{St17} conducted a deeper study of the refraction problem in negative refractive index media. The Minkowski method was used to prove the existence of the weak solution of near field refraction problem, and the optimal transmission method was used to prove the existence of weak solution of far field refraction problem and derive the corresponding Monge-Amp\`ere type equation. However, these studies are based on the assumption of energy conservation. In fact, when the light ray emitted from medium \textsc{I} strikes the interface between medium \textsc{I} and \textsc{II}, it gives two rays, some of the ray will be refracted into medium \textsc{II}, while the other ray will be reflected back to medium \textsc{I}. Therefore, the energy of the incident light ray is not equal to that of the refracted light ray. In 2013, Mawi and Gutiérrez \cite{GM13} studied the far field refraction problem with loss of energy in positive refractive index media, showing the existence of the weak solution of far field refraction problem with loss of energy and deriving the corresponding Monge-Amp\`ere type equation. In fact, refraction with loss of energy also occurs in negative refractive index material \cite{DC07}. Figure \ref{fig1} shows the refraction problem with loss of energy in negative refractive index material, indicating that when an incident light ray having direction of propagation $x\in S^{n-1}$ strikes at $\Gamma$, it will split into two rays: a reflected ray in direction $r\in S^{n-1}$ back into medium \textsc{I} and a refracted ray in direction $m\in S^{n-1}$ transmitted into medium \textsc{II}. 

\begin{figure}[h]
  \centering
  \includegraphics[width=8.3cm]{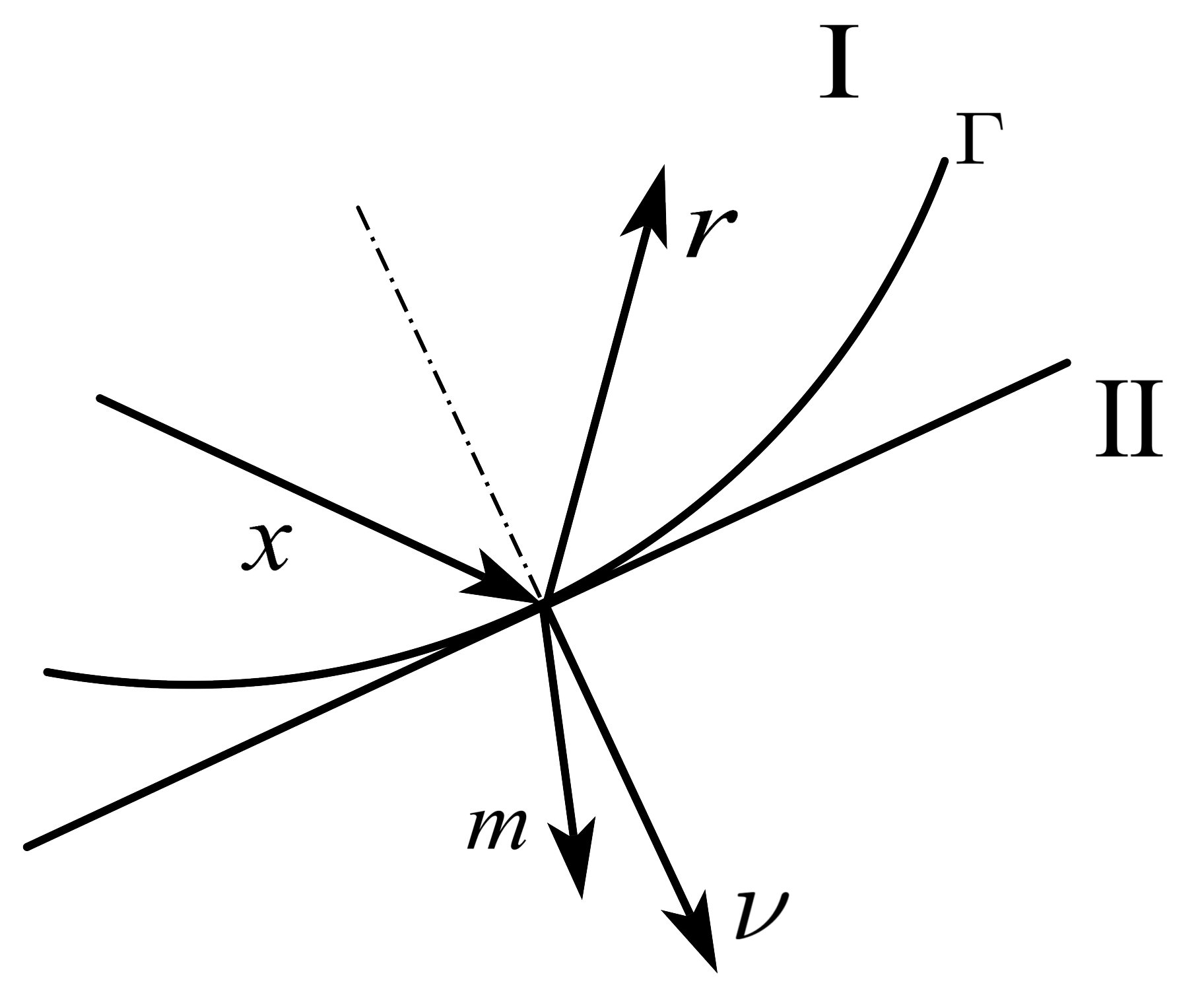}
\caption{Sketch of the refraction problem with loss of energy in negative refractive index material.}\label{fig1}
\end{figure}

\subsection{Description of the problem}

In this paper, we consider the following problem: Suppose that $\Omega$ and $\Omega^{\ast}$ are two domains in $S^{n-1}$, $f$ and $g$ are two integrable functions on $\Omega$ and $\Omega^{\ast}$ respectively, that is, $f\in L^{1}(\bar{\Omega})$, $g\in L^{1}(\bar{\Omega}^{\ast})$. Consider two homogeneous, isotropic media: medium \textsc{I} and medium \textsc{II}, surrounded by  $\Omega$ and $\Omega^{\ast}$ respectively which have different optical densities. Given a direction $m\in \Omega^{\ast}$, we want to construct a surface $\Gamma$ separating media \textsc{I} and \textsc{II}, such that all rays emanate from the origin $O$, located in medium \textsc{I}, with directions $x \in \Omega$ and intensity $f(x)$, are refracted into medium \textsc{II}, with direction $m \in \Omega^{\ast}$ and intensity $g(m)$.  Assuming that the refractive index of medium \textsc{I} is $n_{1}>0$, the refractive index of medium \textsc{II} is $n_{2}<0$, and set the relative refractive index $\kappa= \dfrac{n_{2}}{n_{1}}$, so $\kappa<0$. Notice that in application, it is natural to study the refraction problem in $S^{2}$, see Figure \ref{fig2}. However, in this paper, we directly study the problem in $S^{n-1}$ with $n \geq 2$ for its generality.

\begin{figure}[h]
  \centering
  \includegraphics[width=8.3cm]{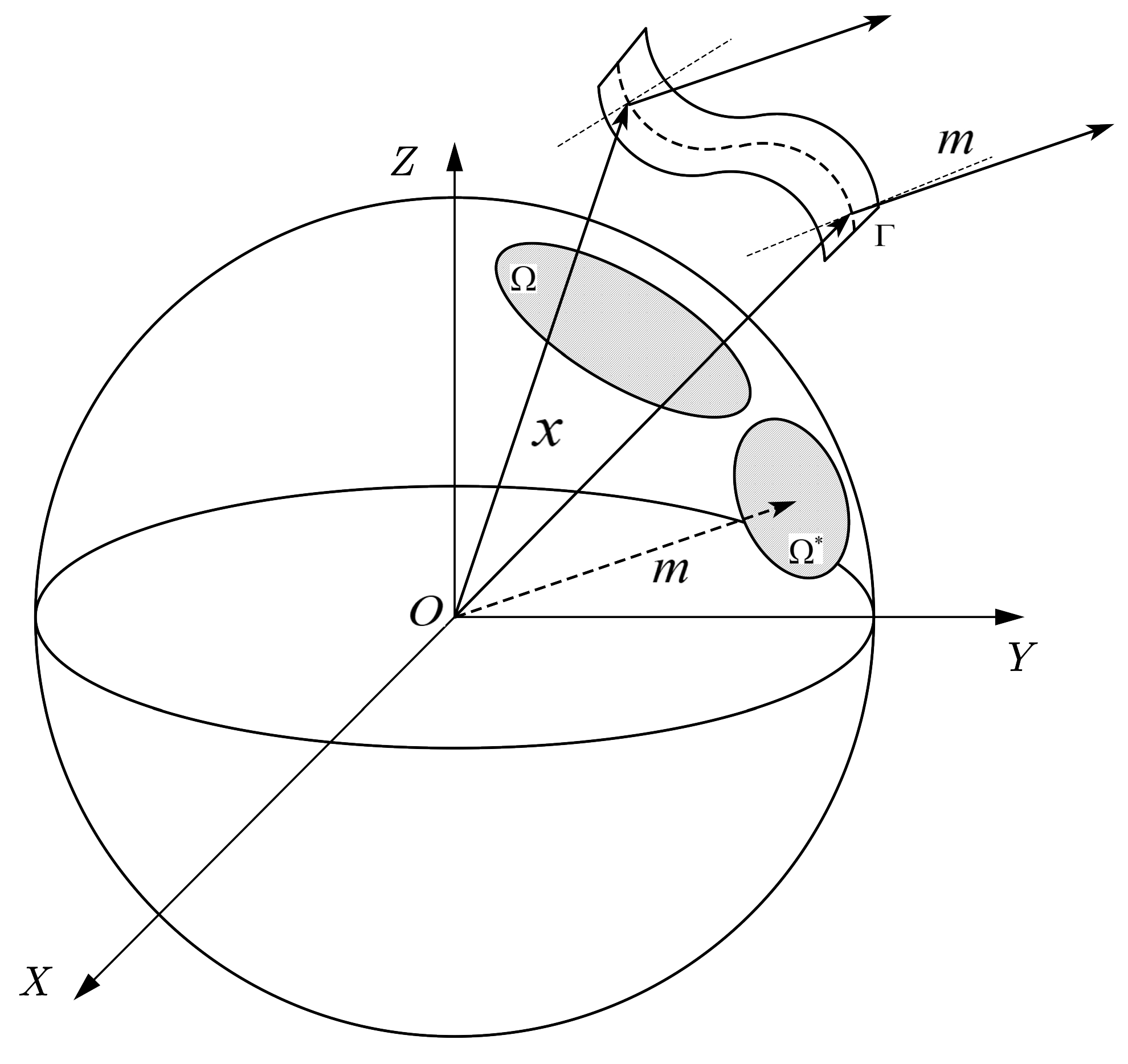}
\caption{Statement of the problem in $S^{2}$.}\label{fig2}
\end{figure}

\subsection{Main results}

There are three main methods for studying reflection and refraction problems \cite{Ma23}: The first method is using variational method to transform the problem into optimal transportation problem, the second method is using energy conservation conditions to derive the Monge-Amp\`ere type equation which the reflective or refractive surfaces satisfy, and the third method is Minkowski method. In this paper, we mainly use Minkowski method to study the far field refraction problem with loss of energy in negative refractive index medium. 

Minkowski method is an iterative approach for solving refraction and reflection problems in geometric optics. To sovle our problem, we first give some properties of refractor and Fresnel formula. Then we introduce the definition of the weak solution of the problem. Next, the existence results when the underlying measure is the finite sum of $\delta$-measures is proved by using approximation by hyperboloids or ellipsoids depending on whether $\kappa<-1$ or $-1<\kappa<0$, see {Theorems \ref{thm3.3} and \ref{thm4.2}}. Using these results, we prove the existence of the weak solution for the general finite Radon measure, see {Theorems \ref{thm3.4} and \ref{thm4.3}}.

Generally, refraction and reflection in geometric optics can be described by Monge-Amp\`ere type equation \cite{Ji23,JT14,JT18,LW21,Wa96}. Based on the definition of the weak solution of the far field refraction problem for the case $\kappa<-1$ and $-1<\kappa<0$, see {Definitions \ref{def3.4} and \ref{def4.4}}, the inequality involving a Monge-Amp\`ere type operator which the solution of the problem satisfies, see Theorem \ref{thm5.1}, is also derived in this paper. To the best of our knowledge, this work presents the first construction of a refractor in negative refractive index material that accounts for the energy used in internal reflection.

The rest of the content is organized as follows: In Section \ref{Section 2}, we give some preliminaries, namely Snell law and Fresnel formula in negative refractive index material. In Section \ref{Section 3}, we study the existence of the weak solution when $\kappa<-1$. We first study the existence of weak solutions in discrete situation, then use approximation by hyperboloids to investigate the existence of weak solutions in general situation. In Section \ref{Section 4}, we use similar way in Section \ref{Section 3} to study the existence of the weak solution when $-1<\kappa<0$. The inequality involving a Monge-Amp\`ere type operator which the solution of the problem satisfies is driven in Section \ref{Section 5}. Finally, in Section \ref{Section 6}, we summarize our work and compare it with previous research.

\section{Preliminaries}\label{Section 2}

\sloppy{}

In this section, we briefly introduce Snell law in vector form and Fresnel formula in negative refractive index material.

\subsection{Snell law in vector form}

Suppose $\Gamma$ is a surface in $\mathbb{R}^{n}$ that separates two homogeneous and isotropic media \textsc{I} and \textsc{II}, with refractive indices $n_{1}>0$ and $n_{2}<0$. A ray of light emitted from $O\in S^{n-1}$ in medium \textsc{I} with direction $x\in S^{n-1}$ strikes at $\Gamma$ at the point $P$, then the refracted ray has the direction $m\in S^{n-1}$ in medium \textsc{II}. Let $\nu$ be the unit normal to $\Gamma$ at $P$ going towards medium \textsc{II}, $\theta_{1}$ be the angel between $x$ and $\nu$ which called the angle of incidence and $\theta_{2}$ be the angle between $m$ and $\nu$ which called the angle of refraction. Then we have the well-known Snell law in scalar form:
\begin{equation}\label{2.1}
  n_{1}\sin\theta_{1} = n_{2}\sin\theta_{2}.
\end{equation}
This law can be written in vector form as:
\begin{equation}\label{2.2}
  n_{1}(x\times\nu) = n_{2}(m\times\nu).
\end{equation}

From (\ref{2.2}), it is easily seen that $x,m$ and $\nu$ are in the same plane. Since we have set $\kappa=\dfrac{n_{2}}{n_{1}}$, 
then~(\ref{2.2}) can be written as
\begin{equation}\label{2.3}
  x-\kappa m = \lambda \nu,
\end{equation}
where $\lambda\in \mathbb{R}$ is given by
\begin{equation}\label{2.4}
  \lambda = x\cdot \nu + \sqrt{(x\cdot \nu)^{2} - (1 - \kappa^{2})} = x\cdot \nu + \vert \kappa \vert \sqrt{1 - \kappa^{-2}(1 - (x\cdot \nu)^{2})}.
\end{equation}
If we set
\begin{equation}\label{2.5}
  \Phi(t) = t + \vert \kappa \vert \sqrt{1 - \kappa^{-2}(1 - t^{2})},
\end{equation}
then we have $\lambda = \Phi (x \cdot \nu)$.

Furthermore, we need to determine the physical constraints of $\bar{\Omega}$ and $\bar{\Omega}^{\ast}$ to ensure that total internal reflection cannot occur.

When $\kappa<-1$, that is, $n_{1}<\vert n_{2} \vert$, in this case, the direction of refracted ray $m$ is close to normal $\nu$. So when $\theta_{1} = \dfrac{\pi}{2}$, the angle of refraction attains its maximum $\theta_{2}^{\ast} = \arcsin\left(-\dfrac{1}{\kappa}\right) := \theta_{c}^{\ast}$. 
Then from Snell law~(\ref{2.1}), we have
\begin{equation*}
  \theta_{1} + \theta_{2} = \arcsin(-\kappa \sin\theta_{2}) + \theta_{2}.
\end{equation*}
Since the function $h(\theta) = \arcsin(-\kappa \sin\theta) + \theta$ is increasing on $[0,\theta_{c}^{\ast}]$, then we have $0\leq \theta_{1} + \theta_{2} = \dfrac{\pi}{2} + \theta_{c}^{\ast}$, so we have
\begin{equation*}
  x \cdot m = \cos(\theta_{1} + \theta_{2}) \geq \cos\left(\dfrac{\pi}{2} + \theta_{c}^{\ast}\right) = \frac{1}{\kappa}.
\end{equation*}

When $-1<\kappa<0$, that is, $n_{1}>|n_{2}|$, in this case, the direction of refracted ray $m$ is away from normal $\nu$. So when $\sin\theta_{1} = -\kappa = -\dfrac{n_{2}}{n_{1}}$, the angle of refraction attains its maximum $\theta_{2}^{\ast} = \dfrac{\pi}{2}$. Therefore, the sine value of the angle of incident is no larger than $-\kappa$, that is, $0\leq \theta_{1} \leq \theta_{c} := \arcsin(-\kappa)$. Then from Snell law (\ref{2.1}), we have
\begin{equation*}
   \theta_{1} + \theta_{2} = \arcsin\left(-\frac{1}{\kappa}\sin\theta_{1}\right) + \theta_{1}.
\end{equation*}
Since the function $h(\theta) = \arcsin\left(-\dfrac{1}{\kappa}\sin\theta\right) + \theta$ is increasing on $[0,\theta_{c}]$, then we have $0\leq \theta_{1} + \theta_{2} \leq \dfrac{\pi}{2} + \theta_{c}$, so we have
\begin{equation*}
  x \cdot m = \cos(\theta_{1} + \theta_{2}) \geq \cos\left(\frac{\pi}{2} + \theta_{c}\right) = \kappa.
\end{equation*}

From the above analysis, we have the following lemma:

\begin{lemma}
  Suppose that the refractive indices of media \textsc{I} and \textsc{II} are given by $n_{1}>0$ and $n_{2}<0$, and set $\kappa = \dfrac{n_{2}}{n_{1}}$.
    
    (a) If $\kappa <-1$, a light ray in medium \textsc{I} in the direction $x \in S^{n-1}$ is refracted by some surface into a light ray in medium \textsc{II} in the direction $m \in S^{n-1}$ if and only if $x\cdot m \geq \dfrac{1}{\kappa}$.
   
    (b) If $-1<\kappa <0$, a light ray in medium \textsc{I} in the direction $x \in S^{n-1}$ is refracted by some surface into a light ray in medium \textsc{II} in the direction $m \in S^{n-1}$ if and only if $x\cdot m \geq \kappa$.
\label{lem2.1}
\end{lemma}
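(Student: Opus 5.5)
Both parts are equivalences, and I would prove each direction using the vector Snell law (\ref{2.3}) together with the range of admissible incidence angles. Throughout, $x,m,\nu$ are unit vectors in a common plane, since $x-\kappa m=\lambda\nu$.

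\emph{Necessity.} Suppose $x$ is refracted into $m$ at a point with normal $\nu$. I would not lean on the monotonicity of the angle sum $h$. Instead I would compute directly from (\ref{2.3}):
\[
|x-\kappa m|^2=1+\kappa^2-2\kappa\,(x\cdot m)=\lambda^2\ge 0 .
\]
The bound then has to come from the sign of $\lambda$ and the constraint on $x\cdot\nu$.
\begin{itemize}
\item Dotting (\ref{2.3}) with $x$ gives $1-\kappa\, x\cdot m=\lambda\, x\cdot\nu$.
\item Dotting (\ref{2.3}) with $m$ gives $x\cdot m-\kappa=\lambda\, m\cdot\nu$.
\end{itemize}
For $-1<\kappa<0$, the refracted ray goes into medium \textsc{II}, so $m\cdot\nu\ge 0$. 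Since $\lambda=\Phi(x\cdot\nu)\ge 0$ whenever $x\cdot\nu\ge 0$ and the root in (\ref{2.4}) is real, the second identity gives $x\cdot m\ge\kappa$.

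For $\kappa<-1$, I would use the first identity. There $x\cdot\nu\in[0,1]$ and $\lambda>0$, with $\lambda\le 1+|\kappa|$. The target $x\cdot m\ge 1/\kappa$ is equivalent to $1-\kappa\,x\cdot m\ge 0$ (multiply by $-\kappa>0$). This reduces to $\lambda\,x\cdot\nu\ge 0$, which holds.

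\emph{Sufficiency.} Given $x,m$ satisfying the inequality, with $x\ne\kappa m$ (automatic since $|\kappa|\neq1$), I would set
\[
\nu=\frac{x-\kappa m}{|x-\kappa m|}, \qquad \lambda=|x-\kappa m|>0 .
\]
Then the plane through a point $P$ on the ray from $O$ with normal $\nu$ is a refracting surface, provided two things hold.
\begin{itemize}
\item \emph{Snell's law.} (\ref{2.2}) holds automatically, since $x\times\nu=\kappa\, m\times\nu$.
\item \emph{Physical orientation.} We need $x\cdot\nu\ge0$ (the ray hits the surface from medium \textsc{I}) and $m\cdot\nu\ge0$ (it enters \textsc{II}). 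In addition, $\lambda$ must agree with the root $\Phi(x\cdot\nu)$ in (\ref{2.4}), not the other root.
\end{itemize}
These orientation conditions read $1-\kappa\,x\cdot m\ge0$ and $x\cdot m-\kappa\ge0$.
\begin{itemize}
\item For $\kappa<-1$: the first is exactly the hypothesis. The second holds since $x\cdot m\ge -1>\kappa$.
\item For $-1<\kappa<0$: the second is the hypothesis. The first holds since $\kappa\,x\cdot m\le|\kappa|<1$.
\end{itemize}

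\emph{Main obstacle.} The delicate point is matching $\lambda=|x-\kappa m|$ with the specific branch $\Phi(x\cdot\nu)$, i.e.\ checking the plus sign in (\ref{2.4}). I would first solve $|x-\lambda\nu|^2=\kappa^2$ as a quadratic in $\lambda$; its roots are $x\cdot\nu\pm\sqrt{(x\cdot\nu)^2-(1-\kappa^2)}$. I would then show that the constraint $m\cdot\nu\ge0$ (for $-1<\kappa<0$), or $\lambda>0$ (for $\kappa<-1$, where the other root is negative), selects the plus root.

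For $-1<\kappa<0$ this amounts to showing $\lambda\ge x\cdot\nu$. Equivalently $-\kappa\, m\cdot\nu\ge 0$, which follows from dotting (\ref{2.3}) with $\nu$. This is also where the equality cases $x\cdot m=\kappa$ or $1/\kappa$ (grazing incidence or grazing refraction) would be checked as limits.
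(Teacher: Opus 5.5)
Your proposal is correct, and it takes a genuinely different route from the paper.

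The paper argues with the scalar Snell law. Because in negative refraction the tangential components of $x$ and $m$ point in opposite directions, $x\cdot m=\cos(\theta_1+\theta_2)$. It writes the angle sum as $h(\theta)=\arcsin(-\kappa\sin\theta)+\theta$ on $[0,\theta_c^{\ast}]$ (resp.\ $h(\theta)=\arcsin(-\kappa^{-1}\sin\theta)+\theta$ on $[0,\theta_c]$) and uses that $h$ is increasing. It then reads off $x\cdot m\geq\cos(\pi/2+\theta_c^{\ast})=1/\kappa$ (resp.\ $x\cdot m\geq\kappa$) from the extremal configuration of grazing incidence (resp.\ grazing refraction). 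This really only gives the ``only if'' direction. The converse is left implicit: it would need the continuity of $h$, the fact that its range fills the whole interval, and a choice of normal in the plane of $x$ and $m$.

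You instead dot $x-\kappa m=\lambda\nu$ with $x$ and with $m$, getting $1-\kappa\,x\cdot m=\lambda\,x\cdot\nu$ and $x\cdot m-\kappa=\lambda\,m\cdot\nu$. The physical constraints $x\cdot\nu\geq 0$ and $m\cdot\nu\geq 0$ then translate linearly into the two thresholds. You prove the converse with the explicit normal $\nu=(x-\kappa m)/|x-\kappa m|$. This includes the branch check $\lambda-x\cdot\nu=-\kappa\,m\cdot\nu\geq 0$, which shows that $\lambda$ is the root $\Phi(x\cdot\nu)$ of (\ref{2.4}).

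Your version is dimension-free, needs no trigonometry, and establishes both directions. It also isolates which constraint binds in each regime: incidence for $\kappa<-1$, refraction for $-1<\kappa<0$. Moreover, $1-\kappa\,x\cdot m$ is exactly the denominator of the hyperboloids and ellipsoids used later (compare (\ref{3.1})). The paper's version, in exchange, makes the critical angles behind the thresholds visible.

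As for your closing remark, no limiting argument is needed for the equality cases under the paper's convention, which admits $\theta_1=\pi/2$ in (a) and $\theta_2=\pi/2$ in (b). Your construction gives $x\cdot\nu=0$ in (a), the degenerate case where the ray lies in the plane. In (b) it gives $m\cdot\nu=0$ with $x\cdot\nu=(1-\kappa^2)/\lambda>0$. In both cases (\ref{2.3}) still holds with the correct branch.
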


\begin{remark}
  Lemma \ref{lem2.1} is typically used to solve the refraction problem without loss of energy. However, since this paper considers the refraction problem with loss of energy, we need to strengthen the conclusions of Lemma \ref{lem2.1} appropriately, see the following:
 
  Suppose that the refractive indices of media \textsc{I} and \textsc{II} surrounded by  $\Omega$ and $\Omega^{\ast}$ respectively, are given by $n_{1}>0$ and $n_{2}<0$, and set $\kappa = \dfrac{n_{2}}{n_{1}}$.
   
   (a) For the case $\kappa <-1$, we assume that there exists $\varepsilon >0$, such that
   \begin{equation}\label{2.6}
     \inf_{x \in \bar{\Omega}, m \in \bar{\Omega}^{\ast}} x \cdot m \geq \frac{1}{\kappa} + \varepsilon,
   \end{equation}
   then from Lemma \ref{lem2.1} (a), a light ray in medium \textsc{I} in the direction $x \in S^{n-1}$ can be refracted by some surface into a light ray in medium \textsc{II} in the direction $m \in S^{n-1}$.
   
   (b) For the case $-1<\kappa <0$, we assume that there exists $\varepsilon >0$, such that
   \begin{equation}\label{2.7}
     \inf_{x \in \bar{\Omega}, m \in \bar{\Omega}^{\ast}} x \cdot m \geq \kappa + \varepsilon,
   \end{equation}
   then from Lemma \ref{lem2.1} (b), a light ray in medium \textsc{I} in the direction $x \in S^{n-1}$ can be refracted by some surface into a light ray in medium \textsc{II} in the direction $m \in S^{n-1}$.
  \label{rem2.1}
\end{remark}


\subsection{Fresnel formula}

From the previous analysis,  we know that when the incident light ray strikes the surface $\Gamma$, it will split into refracted light ray and reflected light ray, so the energy of the incident light ray will be distributed to the refracted light ray and reflected light ray. This subsection briefly gives the energy distribution of reflected and refracted light ray  according to the electromagnetic field theory of light propagation.

Define $\mathbf{E} = \mathbf{E}(\mathbf{r},t)$ as electric field vector and $\mathbf{B} = \mathbf{B}(\mathbf{r},t)$ as magnetic field vector, where $\mathbf{r} = \mathbf{r}(x,y,z)$ represents a point in 3-d space and $t$ is the time, then we have the following system of Maxwell's equations absent from charges:
\begin{equation}\label{2.8}
  \left\{
   \begin{split}
     & \nabla\times \mathbf{E} = -\frac{\mu}{c} \frac{\partial \mathbf{B}}{\partial t},  \\
     & \nabla\times \mathbf{B} = -\frac{\epsilon}{c} \frac{\partial \mathbf{E}}{\partial t},  \\
     & \nabla \cdot (\epsilon \mathbf{E}) = 0,  \\
     & \nabla \cdot (\mu \mathbf{B}) = 0,
   \end{split}
    \right.
\end{equation}
where $c$ is the speed of light in vacuum, $\mu = \mu(x,y,z)$ is the magnetic permeability of the medium and $\epsilon = \epsilon(x,y,z)$ is the electric permittivity of the medium.

Assume that the waves are plane waves, that is, the waves have the same value at all points of any plane perpendicular to the direction of propagation, then from (\ref{2.8}), we have:
\begin{equation}\label{2.9}
  \left\{
  \begin{split}
      & \mathbf{E} = -\frac{c}{\epsilon \omega}(\mathbf{k} \times \mathbf{B}), \\
      & \mathbf{B} = \frac{c}{\mu \omega}(\mathbf{k} \times \mathbf{E}),
  \end{split}
  \right.
\end{equation}
where $c$ is the speed of light in free space, $\mathbf{k} = \displaystyle \frac{\omega}{v} \mathbf{s}$ represents the wave vector, $\omega$ represents the angular frequency of the electromagnetic wave, $v$ represents the speed of light ray in the medium and $\mathbf{s}$ is a unit vector.

The flow of the energy in an electromagnetic wave with electric field $\mathbf{E} = \mathbf{E}(\mathbf{r},t)$ and magnetic field $\mathbf{B} = \mathbf{B}(\mathbf{r},t)$ is given by Poynting vector
\begin{equation}\label{2.10}
  \mathbf{S} = \frac{c}{4\pi}\mathbf{E} \times \mathbf{B}.
\end{equation}
Then from (\ref{2.9}), we have
\begin{equation}\label{2.11}
  \mathbf{S} = \frac{c}{4\pi}\mathbf{E} \times (\frac{c}{\mu \omega} \mathbf{k} \times \mathbf{E}) = \frac{c}{4\pi} \sqrt{\frac{\epsilon}{\mu}}  \mathbf{E} \times \mathbf{s} \times \mathbf{E}.
\end{equation}

We denote quantities referring to the incident wave by the suffix $(i)$, to the refracted wave by $(t)$ and to the reflected wave by $(r)$. Choose a system of coordinates such that the normal $\nu$ to the interface $\Gamma$ at the point of incidence is on the $z$-axis and the $x$ and $y$ axes are on the plane perpendicular to $\nu$. So the tangent plane to $\Gamma$ at $P$ is the $xy$-plane and the incident plane is the $xz$-plane. Then each of the electric field and magnetic field vectors can be resolved into components parallel denoted by subscript $\parallel$ and perpendicular denoted by subscript $\bot$. Then we obtain:
\begin{equation*}
  \left\{
  \begin{split}
       & \mathbf{E}^{(\mathbf{i})}(\mathbf{r},t) = (-A_{\parallel}\cos\theta_{i} , A_{\bot} , A_{\parallel}\sin\theta_{i}) \cos(\omega(t - \frac{\mathbf{r} \cdot \mathbf{s^{(i)}}}{v_{1}})) = \mathbf{E}_{0}^{(\mathbf{i})} \cos(\omega(t - \frac{\mathbf{r} \cdot \mathbf{s^{(i)}}}{v_{1}})), \\
       & \mathbf{E}^{(\mathbf{t})}(\mathbf{r},t) = (-T_{\parallel}\cos\theta_{t} , T_{\bot} , T_{\parallel}\sin\theta_{t}) \cos(\omega(t - \frac{\mathbf{r} \cdot \mathbf{s^{(t)}}}{v_{2}})) = \mathbf{E}_{0}^{(\mathbf{t})} \cos(\omega(t - \frac{\mathbf{r} \cdot \mathbf{s^{(t)}}}{v_{2}})), \\
       & \mathbf{E}^{(\mathbf{r})}(\mathbf{r},t) = (-R_{\parallel}\cos\theta_{r} , R_{\bot} , R_{\parallel}\sin\theta_{r}) \cos(\omega(t - \frac{\mathbf{r} \cdot \mathbf{s^{(r)}}}{v_{1}})) = \mathbf{E}_{0}^{(\mathbf{r})} \cos(\omega(t - \frac{\mathbf{r} \cdot \mathbf{s^{(r)}}}{v_{1}})),
  \end{split}
  \right.
\end{equation*}
and
\begin{equation*}
  \begin{cases}
    \mathbf{B}^{(\mathbf{i})}(\mathbf{r},t) & = \sqrt{\dfrac{\epsilon_{1}}{\mu_{1}}}(-A_{\bot}\cos\theta_{i} , -A_{\parallel} , A_{\bot}\sin\theta_{i}) \cos(\omega(t - \dfrac{\mathbf{r} \cdot \mathbf{s^{(i)}}}{v_{1}})) \\
    & =  \sqrt{\dfrac{\epsilon_{1}}{\mu_{1}}} \mathbf{B}_{0}^{(\mathbf{i})} \cos(\omega(t - \dfrac{\mathbf{r} \cdot \mathbf{s^{(i)}}}{v_{1}})), \\
    \mathbf{B}^{(\mathbf{t})}(\mathbf{r},t) & = \sqrt{\dfrac{\epsilon_{2}}{\mu_{2}}}(-A_{\bot}\cos\theta_{t} , -A_{\parallel} , A_{\bot}\sin\theta_{t}) \cos(\omega(t - \dfrac{\mathbf{r} \cdot \mathbf{s^{(t)}}}{v_{2}})) \\
    & =  \sqrt{\dfrac{\epsilon_{2}}{\mu_{2}}} \mathbf{B}_{0}^{(\mathbf{t})} \cos(\omega(t - \dfrac{\mathbf{r} \cdot \mathbf{s^{(t)}}}{v_{2}})), \\
    \mathbf{B}^{(\mathbf{r})}(\mathbf{r},t) & = \sqrt{\dfrac{\epsilon_{1}}{\mu_{1}}}(-A_{\bot}\cos\theta_{r} , -A_{\parallel} , A_{\bot}\sin\theta_{r}) \cos(\omega(t - \dfrac{\mathbf{r} \cdot \mathbf{s^{(r)}}}{v_{1}}))  \\
    & =  \sqrt{\dfrac{\epsilon_{1}}{\mu_{1}}} \mathbf{B}_{0}^{(\mathbf{r})} \cos(\omega(t - \dfrac{\mathbf{r} \cdot \mathbf{s^{(r)}}}{v_{1}})),
  \end{cases}
\end{equation*}
where $v_{1} = \displaystyle \frac{c}{\sqrt{\epsilon_{1} \mu_{1}}}$, $v_{2} = \displaystyle \frac{c}{\sqrt{\epsilon_{2} \mu_{2}}}$, $A$, $R$ and $T$ are the amplitude vectors and $\mathbf{s}^{(\mathbf{i})}$, $\mathbf{s}^{(\mathbf{t})}$ and $\mathbf{s}^{(\mathbf{r})}$ are the directions of propagation of the corresponding fields. The boundary conditions expressing the continuity of the tangential components of the electric and magnetic fields across the interface \cite{BW13}, then we have
\begin{equation}\label{2.12}
  \left\{
  \begin{split}
       & \mathbf{k} \times \mathbf{E}^{(\mathbf{i})}_{0} + \mathbf{k} \times \mathbf{E}^{(\mathbf{r})}_{0} =  \mathbf{k} \times \mathbf{E}^{(\mathbf{t})}_{0},  \\
       & \mathbf{k} \times \mathbf{B}^{(\mathbf{i})}_{0} + \mathbf{k} \times \mathbf{B}^{(\mathbf{r})}_{0} = \mathbf{k} \times \mathbf{B}^{(\mathbf{t})}_{0}.
  \end{split}
  \right.
\end{equation}
From (\ref{2.11}), we obtain
\begin{equation}\label{2.13}
  \left\{
  \begin{split}
       & A_{\bot} + R_{\bot} = T_{\bot}, \\
       & \cos\theta_{i}(A_{\parallel} - R_{\parallel}) = \cos\theta_{t}T_{\parallel}, \\
       & \frac{A_{\parallel}}{\sqrt{\dfrac{\epsilon_{1}}{\mu_{1}}}} +  \frac{R_{\parallel}}{\sqrt{\dfrac{\epsilon_{1}}{\mu_{1}}}} = \frac{T_{\parallel}}{\sqrt{\dfrac{\epsilon_{2}}{\mu_{2}}}}, \\
       & \cos\theta_{i}\left(\frac{A_{\bot}}{\sqrt{\dfrac{\epsilon_{1}}{\mu_{1}}}} - \frac{R_{\bot}}{\sqrt{\dfrac{\epsilon_{1}}{\mu_{1}}}}\right) = \cos\theta_{t} \frac{T_{\bot}}{\sqrt{\dfrac{\epsilon_{2}}{\mu_{2}}}}.
  \end{split}
  \right.
\end{equation}
Define the wave impedance of the medium as $z = \sqrt{\dfrac{\mu}{\epsilon}}$, then we obtain the following Fresnel formula from (\ref{2.13}):
\begin{equation}\label{2.14}
  \left\{
  \begin{split}
       & T_{\parallel} = \frac{2z_{1}\cos\theta_{i}}{z_{2} \cos\theta_{i} + z_{1}\cos\theta_{t}}A_{\parallel}, \\
       & T_{\bot} = \frac{2z_{1}\cos\theta_{i}}{z_{1}\cos\theta_{i} +  z_{2} \cos\theta_{t}}A_{\bot}, \\
       & R_{\parallel} =  \frac{z_{2} \cos\theta_{i} - z_{1}\cos\theta_{t}}{z_{2} \cos\theta_{i} + z_{1}\cos\theta_{t}}A_{\parallel},\\
       & R_{\bot} = \frac{z_{1}\cos\theta_{i} -  z_{2} \cos\theta_{t}}{z_{1}\cos\theta_{i} +  z_{2} \cos\theta_{t}}A_{\bot}.
  \end{split}
  \right.
\end{equation}
From Snell law (\ref{2.3}) and the fact that $x \cdot \nu = \cos\theta_{i}$, $m \cdot \nu = \cos\theta_{t}$, (\ref{2.14}) can be written as
\begin{equation}\label{2.15}
  \left\{
  \begin{split}
       & T_{\parallel} = \frac{2 z_{1} x\cdot (x-\kappa m)}{(z_{2} x + z_{1} m)\cdot (x - \kappa m)}A_{\parallel}, \\
       & T_{\bot} = \frac{2 z_{1} x\cdot (x-\kappa m)}{(z_{1} x + z_{2} m) \cdot (x - \kappa m)}A_{\bot}, \\
       & R_{\parallel} = \frac{(z_{2} x - z_{1} m)\cdot (x - \kappa m)}{(z_{2} x + z_{1} m)\cdot (x - \kappa m)}A_{\parallel}, \\
       & R_{\bot} = \frac{(z_{1} x - z_{2} m) \cdot (x - \kappa m)}{(z_{1} x + z_{2} m) \cdot (x - \kappa m)}A_{\bot}.
  \end{split}
  \right.
\end{equation}

Using Poynting vector (\ref{2.10}), the amount of energies of incident, transmitted and reflected waves leaving a unit area of the boundary per second is given by
\begin{equation*}
  \left\{
  \begin{split}
       & J^{(i)} = \vert \mathbf{S}^{\mathbf{i}} \vert \cos\theta_{i} = \frac{c}{4\pi} \sqrt{\frac{\epsilon_{1}}{\mu_{1}}} \vert \mathbf{E}_{0}^{(\mathbf{i})}\vert^{2} x \cdot \nu, \\
       & J^{(t)} = \vert \mathbf{S}^{\mathbf{t}} \vert \cos\theta_{t} = \frac{c}{4\pi} \sqrt{\frac{\epsilon_{2}}{\mu_{2}}} \vert \mathbf{E}_{0}^{(\mathbf{t})}\vert^{2} m \cdot \nu, \\
       & J^{(r)} = \vert \mathbf{S}^{\mathbf{r}} \vert \cos\theta_{r} = \frac{c}{4\pi} \sqrt{\frac{\epsilon_{1}}{\mu_{1}}} \vert \mathbf{E}_{0}^{(\mathbf{r})}\vert^{2} x \cdot \nu.
  \end{split}
  \right.
\end{equation*}
Then we can define the reflection and transmission coefficients as
\begin{equation*}
  \left\{
  \begin{split}
       & r_{\Gamma}(x) = \frac{J^{(r)}}{J^{(i)}} = \left(\frac{\vert \mathbf{E}_{0}^{(\mathbf{r})} \vert}{\vert \mathbf{E}_{0}^{(\mathbf{i})} \vert}\right)^{2}, \\
       & t_{\Gamma}(x) = \frac{J^{(t)}}{J^{(i)}} = \sqrt{\frac{\epsilon_{2}\mu_{1}}{\epsilon_{1}\mu_{2}}} \frac{m \cdot \nu}{x \cdot \nu}\left(\frac{\vert \mathbf{E}_{0}^{(\mathbf{t})} \vert}{\vert \mathbf{E}_{0}^{(\mathbf{i})}\vert}\right)^{2}.
  \end{split}
  \right.
\end{equation*}
From Fresnel formula (\ref{2.15}), we have
\begin{equation}\label{2.16}
  \begin{aligned}
  r_{\Gamma}(x) & = \left[ \frac{z_{2} + \kappa z_{1} - (z_{1} + \kappa z_{2}) x \cdot m}{z_{2} - \kappa z_{1} + (z_{1} - \kappa z_{2}) x \cdot m} \right]^{2} \frac{A_{\parallel}^{2}}{A_{\parallel}^{2} + A_{\bot}^{2}} \\
  & \quad + \left[ \frac{z_{1} + \kappa z_{2} - (z_{2} + \kappa z_{1}) x \cdot m}{z_{1} - \kappa z_{2} + (z_{2} - \kappa z_{1}) x \cdot m} \right]^{2} \frac{A_{\bot}^{2}}{A_{\parallel}^{2} + A_{\bot}^{2}},
  \end{aligned}
\end{equation}
and by conservation of energy, we have
\begin{equation}\label{2.17}
  t_{\Gamma}(x) = 1 - r_{\Gamma}(x).
\end{equation}

\begin{remark}
  Equations (\ref{2.16}) and (\ref{2.17}) are called Fresnel's equation and $r_{\Gamma}(x)$ and $t_{\Gamma}(x)$ are called Fresnel coefficients.
  \label{rem2.2}
\end{remark}

\begin{remark}
    From Snell law (\ref{2.3}) and Eqs (\ref{2.16}) and (\ref{2.17}), $r_{\Gamma}(x)$ and $t_{\Gamma}(x)$ are functions only depending on $x$ and $\nu$.
    \label{rem2.3}
\end{remark}

\section{Far field refraction problem for the case $\kappa <-1$ with loss of energy}\label{Section 3}

\sloppy{}

In this section, we study the far field refraction problem for $\kappa = \dfrac{n_{2}}{n_{1}}<-1$. We first give the definition and some properties of the refractor, then discuss properties of Fresnel coefficients and define the weak solution of the far field refraction problem for the case $\kappa <-1$ with loss of energy. Finally, the existence of weak solution in both discrete and general situations are proved. Recall from (\ref{2.6}) in Remark \ref{rem2.1}, we must have
\begin{equation*}
  \inf_{x \in \bar{\Omega}, m \in \bar{\Omega}^{\ast}} x \cdot m \geq \frac{1}{\kappa} + \varepsilon
\end{equation*}
for some $\varepsilon>0$. Hence we have
\begin{equation}\label{3.1}
  1 - \kappa x \cdot m \geq -\varepsilon \kappa.
\end{equation}

\subsection{Refractor and its properties}\label{sec3.1}
The definition of refractor in the case $\kappa<-1$ stems from \cite{St17}.

\begin{definition}
  A parameterized surface $\Gamma$ in $\mathbb{R}^{n}$ given by $\Gamma = \{\rho(x)x; \rho \in C(\bar{\Omega})\}$ is a refractor from $\bar{\Omega}$ to $\bar{\Omega}^{\ast}$ in the case $\kappa<-1$, if for any $x_{0} \in \bar{\Omega}$, there exists a semi-hyperboloid defined as $H(m,b) = \{\rho(x)x; ~\rho(x) = \dfrac{b}{1 - \kappa m \cdot x}, ~x\in S^{n-1}, ~x \cdot m \geq \dfrac{1}{\kappa}\}$, such that $\rho(x_{0}) = \dfrac{b}{1 - \kappa m\cdot x_{0}}$ and $\rho(x) \geq \dfrac{b}{1 - \kappa m\cdot x}$ for all $x\in \bar{\Omega}$. Such $H(m,b)$ is called a supporting hyperboloid to $\Gamma$ at the point $\rho(x_{0})x_{0}$.
  \label{def3.1}
\end{definition}

The following Figure \ref{fig3} shows the semi-hyperboloid which refracts all ray emitted from the source $O$ to a specific direction for the case $\kappa<-1$.

\begin{figure}[h]
  \centering
  \includegraphics[width=8.3cm]{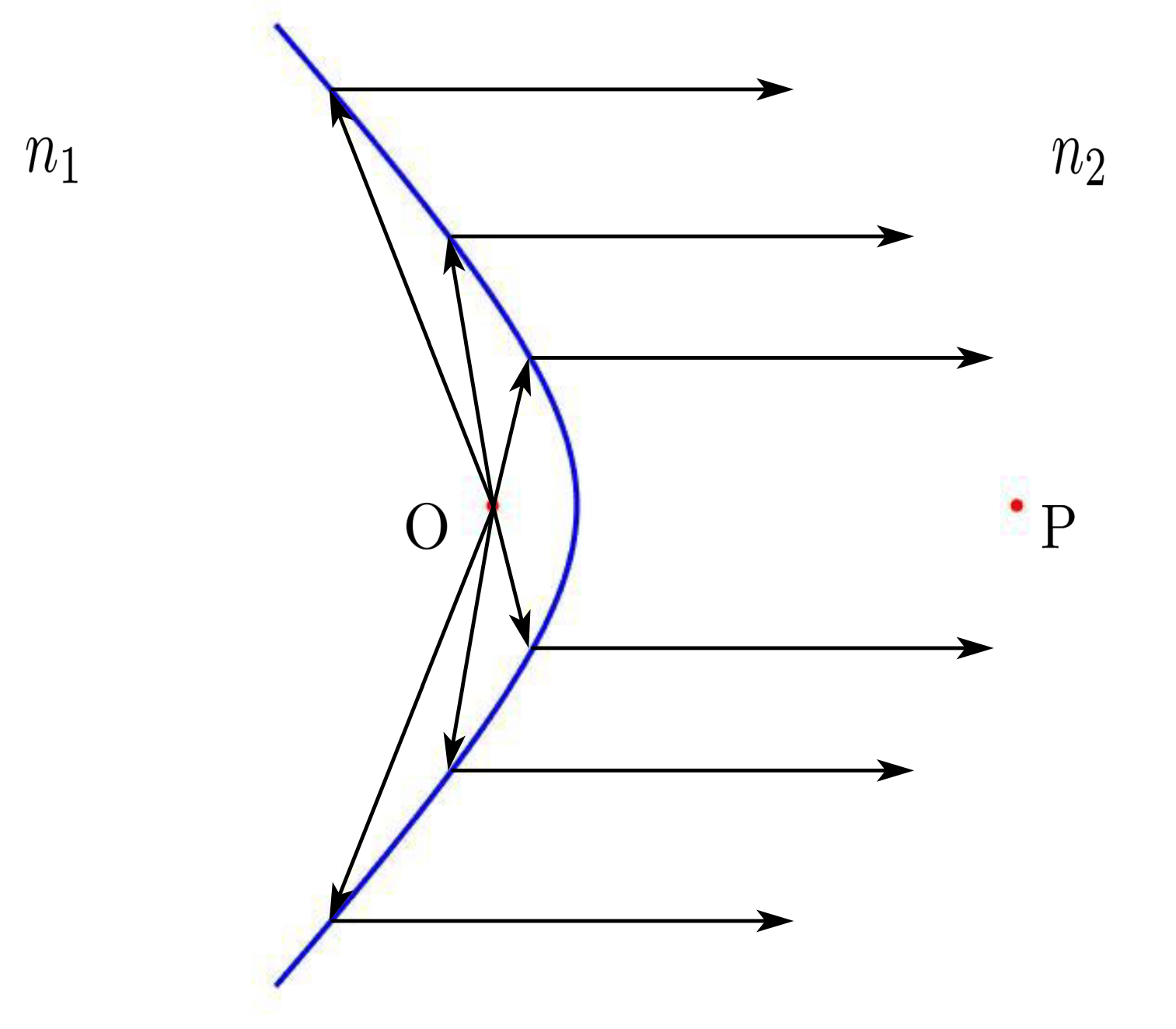}
\caption{Hyperboloid refracting when $\kappa<-1$, where $O$ and $P$ are focus of hyperboloid.}\label{fig3}
\end{figure}

Now we turn to discuss some properties of the refractor for the case $\kappa<-1$.

\begin{lemma}
  Any refractor is globally Lipschitz continuous on $\bar{\Omega}$, hence the set of singular points (set of discontinuous points) is a null set.
  \label{lem3.1}
\end{lemma}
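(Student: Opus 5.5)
The plan is to reduce Lipschitz continuity of $\rho$ to uniform bounds on the supporting hyperboloids, following the standard argument of \cite{GH09,St17} for refractors made of supporting surfaces.

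\textbf{Step 1: $\rho$ is bounded above and away from zero.} Since $\rho\in C(\bar{\Omega})$ and $\bar{\Omega}$ is compact, we may set $M=\max_{\bar{\Omega}}\rho$. We also need $\min_{\bar\Omega}\rho>0$, which holds because $\rho$ is a positive radial function: the surface does not pass through the source $O$.

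\textbf{Step 2: uniform bounds on the supporting parameters.} Fix $x_0\in\bar{\Omega}$ and let $H(m,b)$ be a supporting hyperboloid at $\rho(x_0)x_0$. The supporting hyperboloids used are those with $m\in\bar{\Omega}^{\ast}$, as in the refraction problem. By (\ref{3.1}) we have $1-\kappa m\cdot x\ge -\varepsilon\kappa>0$ for all $x\in\bar{\Omega}$, and trivially $1-\kappa m\cdot x\le 1-\kappa$. From $b=\rho(x_0)(1-\kappa m\cdot x_0)$ we then get
\[
0<b\le M(1-\kappa).
\]

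\textbf{Step 3: a one-sided Lipschitz bound at $x_0$.} For any $x\in\bar{\Omega}$, the supporting property gives
\[
\rho(x)-\rho(x_0)\ \ge\ \frac{b}{1-\kappa m\cdot x}-\frac{b}{1-\kappa m\cdot x_0}
=\frac{b\,\kappa\, m\cdot(x-x_0)}{(1-\kappa m\cdot x)(1-\kappa m\cdot x_0)}.
\]
Using Step 2 and $|m|=1$, this yields
\[
\rho(x)-\rho(x_0)\ \ge\ -C|x-x_0|,\qquad C=\frac{M(1-\kappa)|\kappa|}{\varepsilon^2\kappa^2}.
\]
The constant $C$ is independent of $x_0$.

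\textbf{Step 4: symmetrize.} Exchanging the roles of $x$ and $x_0$ (a supporting hyperboloid exists at every point) gives $\rho(x_0)-\rho(x)\ge -C|x-x_0|$. Combining the two inequalities, $|\rho(x)-\rho(x_0)|\le C|x-x_0|$ for all $x,x_0\in\bar{\Omega}$. This is global Lipschitz continuity, with the Euclidean distance on $S^{n-1}$ being equivalent to the geodesic distance.

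\textbf{Step 5: the singular set.} By Rademacher's theorem applied in local charts of $S^{n-1}$, $\rho$ is differentiable almost everywhere. Hence the set of singular points, meaning points where $\Gamma$ has no unique tangent plane or $\rho$ fails to be differentiable, has surface measure zero.

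\textbf{Main obstacle.} The critical point is Step 2. It needs the uniform bound away from zero on the denominators, which requires $m\in\bar{\Omega}^{\ast}$ together with condition (\ref{2.6}); this is exactly where the $\varepsilon$ of Remark \ref{rem2.1} is used. Without it, a supporting hyperboloid with $x\cdot m$ near $1/\kappa$ would make the constant $C$ blow up.
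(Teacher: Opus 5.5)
Your proof is correct and follows the paper's argument: bound $b$ using $\max_{\bar\Omega}\rho$ and (\ref{3.1}), obtain a Lipschitz estimate from the supporting hyperboloid at one point, symmetrize, and invoke Rademacher's theorem. Your version is slightly more careful than the paper's: the paper writes the two-sided absolute value before symmetrizing, and it asserts $b\le M$ where only $b\le M(1-\kappa)$ follows.
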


\begin{proof}
  Suppose $\Gamma$ is a refractor from $\bar{\Omega}$ to $\bar{\Omega}^{\ast}$, parameterized by $\rho(x)x$, $x\in \bar{\Omega}$. Let $x\in \Omega$ and $H(m,b)$ supports $\Gamma$ at $\rho(x)x$. Then for any $y \in \bar{\Omega}$, we have $\rho(y) \geq \dfrac{b}{1 - \kappa m\cdot y}$ and $\rho(x) = \dfrac{b}{1 - \kappa m\cdot x}$.

  Since $\rho \in C(\bar{\Omega})$, then there exists $M>0$, such that $\rho(x) \leq M$ for all $x\in \bar{\Omega}$, thus $b \leq M$.
  Using (\ref{3.1}), we have
  \begin{align*}
    \vert \rho(x) - \rho(y) \vert & \leq \vert \frac{b}{1 - \kappa m\cdot x} - \frac{b}{1 - \kappa m\cdot y}  \vert \\
                      & = \vert \frac{b \kappa m \cdot (x-y)}{(1 - \kappa m \cdot x)(1 - \kappa m \cdot y)} \vert \\
                      & \leq -\frac{b\kappa \Vert x-y \Vert}{(-\varepsilon \kappa)^{2}} \\
                      & \leq -\frac{M}{\varepsilon^{2} \kappa} \Vert x-y \Vert.
  \end{align*}

Exchanging the roles of $x$ and $y$, then we can get $\vert \rho(x) - \rho(y)\vert \leq L \Vert x - y \Vert$ for some $L \geq 0$, hence $\rho$ is globally Lipschitz continuous on $\bar{\Omega}$. Then from Rademacher's theorem \cite{EV10}, we get the singular points set of $\rho$ is a null set.
\end{proof}

\begin{remark}
  If a refractor $\Gamma$ parameterized by $\rho$ has two distinct supporting semi-hyperboloid at $\rho(x)x$, then $\rho(x)x$ is a singular point of $\Gamma$.
  \label{rem3.1}
\end{remark}

\begin{lemma}
  Suppose $\Gamma = \{\rho(x)x ; ~\rho \in C(\bar{\Omega})\}$ is a refractor from $\bar{\Omega}$ to $\bar{\Omega}^{\ast}$, such that $\inf\limits_{x\in \bar{\Omega}}\rho(x) = 1$, then there exists a constant $C>0$ depending on $\varepsilon$ and $\kappa$, such that $\sup\limits_{x\in \bar{\Omega}}\rho(x) \leq C$.
  \label{lem3.2}
\end{lemma}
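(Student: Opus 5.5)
Since $\bar{\Omega}$ is compact and $\rho\in C(\bar{\Omega})$, the infimum is attained at some $x_{0}\in\bar{\Omega}$ with $\rho(x_{0})=1$. By Definition \ref{def3.1}, there is a supporting semi-hyperboloid $H(m,b)$ at $\rho(x_{0})x_{0}$ with $m\in\bar{\Omega}^{\ast}$. This gives
\begin{equation*}
  1=\rho(x_{0})=\frac{b}{1-\kappa m\cdot x_{0}},
\end{equation*}
hence $b=1-\kappa m\cdot x_{0}$. Since $\kappa<0$ and $|m\cdot x_{0}|\le 1$, this yields $-\varepsilon\kappa\le b\le 1-\kappa$, using (\ref{3.1}) for the lower bound. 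So $b$ is controlled purely by $\kappa$ and $\varepsilon$. That pins down one hyperboloid, but its support is only a lower bound on $\rho$, so it cannot bound the supremum. The upper bound has to come from the supporting hyperboloid at the maximum point instead.

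So take $x_{1}\in\bar{\Omega}$ with $\rho(x_{1})=\sup\rho$, and let $H(m_{1},b_{1})$ support $\Gamma$ there, with $m_{1}\in\bar{\Omega}^{\ast}$. Then
\begin{equation*}
  \rho(x_{1})=\frac{b_{1}}{1-\kappa m_{1}\cdot x_{1}}
  \qquad\text{and}\qquad
  \rho(x)\ge \frac{b_{1}}{1-\kappa m_{1}\cdot x}\quad\text{for all } x\in\bar{\Omega}.
\end{equation*}
Evaluating the second relation at $x_{0}$ gives $1\ge b_{1}/(1-\kappa m_{1}\cdot x_{0})$. Hence $b_{1}\le 1-\kappa m_{1}\cdot x_{0}\le 1-\kappa$.

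Finally, use (\ref{3.1}) in the denominator at $x_{1}$:
\begin{equation*}
  \sup\rho=\rho(x_{1})=\frac{b_{1}}{1-\kappa m_{1}\cdot x_{1}}\le \frac{1-\kappa}{-\varepsilon\kappa}=:C,
\end{equation*}
which depends only on $\varepsilon$ and $\kappa$.

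The main point to check is that the vectors $m$ of the supporting hyperboloids lie in $\bar{\Omega}^{\ast}$, so that (\ref{3.1}) applies to $m_{1}\cdot x_{1}$. This is implicit in calling $\Gamma$ a refractor "from $\bar{\Omega}$ to $\bar{\Omega}^{\ast}$", and I would state it explicitly at that step. Positivity of $b_{1}$ also follows, since $\rho>0$.
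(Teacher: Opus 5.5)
Your proof is correct and follows essentially the same argument as the paper. You take the supporting hyperboloid $H(m_1,b_1)$ at a maximum point, use $\rho\ge b_1/(1-\kappa m_1\cdot x)$ with $\inf\rho=1$ to get $b_1\le 1-\kappa$, and bound the denominator below by $-\varepsilon\kappa$ via (\ref{3.1}), giving $C=(1-\kappa)/(-\varepsilon\kappa)$; the only differences are cosmetic (you evaluate at the minimizer rather than taking an infimum over $x$), and your opening paragraph on the hyperboloid supporting at the minimizer is unnecessary and can be dropped.
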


\begin{proof}
  Suppose that there exists $x_{0} \in \bar{\Omega}$, such that $\rho(x_{0}) = \sup\limits_{x\in \bar{\Omega}}\rho(x)$, and let $H(m_{0},b_{0})$ be the supporting hyperboloid to $\Gamma$ at $\rho(x_{0})x_{0}$. Then we have
    $\rho(x_{0}) = \dfrac{b_{0}}{1 - \kappa m_{0} \cdot x_{0}},$ and $\rho(x) \geq \dfrac{b_{0}}{1 - \kappa m_{0} \cdot x}$ for all $x \in \bar{\Omega}$. Since $\dfrac{b_{0}}{1 - \kappa} \leq \dfrac{b_{0}}{1 - \kappa m_{0} \cdot x}$ for all $x \in \bar{\Omega}$, so we have
    $$\frac{b_{0}}{1 - \kappa} \leq \inf_{x \in \bar{\Omega}}\frac{b_{0}}{1 - \kappa m_{0} \cdot x} \leq \inf_{x \in \bar{\Omega}} \rho(x) = 1.$$
    Hence we have $b_{0} \leq 1 - \kappa$.
    Consequently,
    $$\rho(x_{0}) = \frac{b_{0}}{1 - \kappa m_{0} \cdot x_{0}} \leq \frac{1 - \kappa}{1 - \kappa m_{0} \cdot x_{0}} \leq \frac{1-\kappa}{-\varepsilon \kappa}.$$
    Then we get $\sup\limits_{x\in \bar{\Omega}}\rho(x) \leq C$.
\end{proof}

Next, we define refractor mapping and trace mapping, and discuss some properties of them.

\begin{definition}
  Suppose the refractor $\Gamma = \{\rho(x)x; ~\rho \in C(\bar{\Omega})\}$ is given, the refractor mapping of $\Gamma$ is a multi-value map defined by
  \begin{equation}\label{3.2}
    \mathcal{N}_{\Gamma}(x_{0}) = \{m \in \bar{\Omega}^{\ast};~H(m,b) ~supports~\Gamma ~at ~\rho(x_{0})x_{0} ~ for~some ~b>0 \}.
  \end{equation}
  Given $m \in \bar{\Omega}^{\ast}$, the trace mapping of $\Gamma$ is defined by
  \begin{equation}\label{3.3}
    \mathcal{T}_{\Gamma}(m_{0}) = \{x \in \bar{\Omega};~m_{0}\in \mathcal{N}_{\Gamma}(x_{0})\}.
  \end{equation}
  \label{def3.2}
\end{definition}

\begin{lemma}
  If $m \in \bar{\Omega}^{\ast}$, then $\mathcal{T}_{\Gamma}(m)$ is a closed set in $\bar{\Omega}$.
  \label{lem3.3}
\end{lemma}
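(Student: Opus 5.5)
To prove Lemma \ref{lem3.3} we show that $\mathcal{T}_{\Gamma}(m)$ is sequentially closed. Take $x_k \in \mathcal{T}_{\Gamma}(m)$ with $x_k \to x_0$; since $\bar{\Omega}$ is closed, $x_0 \in \bar{\Omega}$. By Definition \ref{def3.2}, for each $k$ there is $b_k>0$ such that $H(m,b_k)$ supports $\Gamma$ at $\rho(x_k)x_k$. That is,
\begin{equation*}
\rho(x_k) = \frac{b_k}{1-\kappa m\cdot x_k}, \qquad \rho(x) \geq \frac{b_k}{1-\kappa m\cdot x} \quad \text{for all } x\in\bar{\Omega}.
\end{equation*}
The direction $m$ is fixed throughout, so only the constants $b_k$ vary.

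We next show that the $b_k$ have a positive limit. From the first identity, $b_k = \rho(x_k)(1-\kappa m\cdot x_k)$. Because $\rho$ is continuous on $\bar{\Omega}$ (indeed Lipschitz, by Lemma \ref{lem3.1}), we get
\begin{equation*}
b_k \to b_0 := \rho(x_0)(1-\kappa m\cdot x_0).
\end{equation*}
There is no need to pass to a subsequence. We have $1-\kappa m\cdot x_0 \geq -\varepsilon\kappa>0$ by (\ref{3.1}). Also $\rho(x_0)>0$, which holds whenever the refractor is described by a positive radial function, as implicit in the definition. Hence $b_0>0$.

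We then pass to the limit in the support inequality. For each fixed $x\in\bar{\Omega}$, letting $k\to\infty$ in $\rho(x)\geq b_k/(1-\kappa m\cdot x)$ gives $\rho(x)\geq b_0/(1-\kappa m\cdot x)$. By construction, $\rho(x_0)=b_0/(1-\kappa m\cdot x_0)$. The constraint $x\cdot m\geq 1/\kappa$ in the definition of $H(m,b_0)$ holds on $\bar{\Omega}$ by (\ref{2.6}). Therefore $H(m,b_0)$ supports $\Gamma$ at $\rho(x_0)x_0$, so $m\in\mathcal{N}_{\Gamma}(x_0)$ and $x_0\in\mathcal{T}_{\Gamma}(m)$.

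The argument is routine. The only point needing care is that the limiting parameter $b_0$ is strictly positive, so that $H(m,b_0)$ is an admissible supporting hyperboloid under Definition \ref{def3.2}, which requires $b>0$. This follows from (\ref{3.1}) together with the positivity of $\rho$.
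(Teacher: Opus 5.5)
Your proof is correct and follows essentially the same route as the paper: take $x_k\to x_0$ in $\mathcal{T}_{\Gamma}(m)$, use continuity of $\rho$ to pass the support identity and inequality to the limit, and conclude that $H(m,b_0)$ supports $\Gamma$ at $\rho(x_0)x_0$. The only difference is that the paper silently uses a single $b$ for all $x_k$. That is legitimate, since the support condition forces $b=\min_{\bar{\Omega}}\rho(x)(1-\kappa m\cdot x)$, so $b_k$ is in fact constant. You instead let $b_k$ vary and obtain $b_0>0$ by continuity, which is slightly more careful.
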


\begin{proof}
  Let $x_{n} \in \mathcal{T}_{\Gamma}(m)$ and $x_{n} \rightarrow x_{0}$, we need to prove that $x_{0} \in \mathcal{T}_{\Gamma}(m)$.

  For $x_{n} \in \mathcal{T}_{\Gamma}(m)$, then there exists $b>0$, such that $H(m,b)$ supports $\Gamma$ at $\rho(x_{n})x_{n}$, thus we have $\rho(x_{n}) = \dfrac{b}{1 - \kappa m \cdot x_{n}}$. For $x_{0} \in \bar{\Omega}$, then from Lemma \ref{lem3.1}, we have $\rho(x_{n})x_{n} \rightarrow \rho(x_{0})x_{0}$, so $\rho(x_{0}) = \dfrac{b}{1 - \kappa m \cdot x_{0}}$. Moreover, for $x \in \bar{\Omega}$, we have $\rho(x) \geq \dfrac{b}{1 - \kappa m \cdot x_{n}}$, then $\rho(x) \geq \dfrac{b}{1 - \kappa m \cdot x_{0}}$. So $H(m,b)$ supports $\Gamma$ at $\rho(x_{0})x_{0}$, that is, $x_{0} \in \mathcal{T}_{\Gamma}(m)$.
\end{proof}

\begin{lemma}
  For any $F \in \bar{\Omega}^{\ast}$, we have
  
   (a) $[\mathcal{T}_{\Gamma}(F)]^{c} \subseteq \mathcal{T}_{\Gamma}(F^{c})$;
  
   (b) The set $\mathcal{M} = \{F \subseteq \bar{\Omega}^{\ast};~\mathcal{T}_{\Gamma}(F)~is~Lebesgue~measurable\}$ is a $\sigma$-algebra containing all Borel sets in $\bar{\Omega}^{\ast}$.
  \label{lem3.4}
\end{lemma}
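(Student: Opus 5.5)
Part (a) is set-theoretic and uses only that $\mathcal{N}_{\Gamma}(x)$ is nonempty for every $x\in\bar{\Omega}$. Here the complement of $\mathcal{T}_{\Gamma}(F)$ is taken in $\bar{\Omega}$ and $F^{c}$ means $\bar{\Omega}^{\ast}\setminus F$. Let $x\in\bar{\Omega}$ with $x\notin\mathcal{T}_{\Gamma}(F)$. By Definition \ref{def3.1} there is a supporting semi-hyperboloid $H(m,b)$ to $\Gamma$ at $\rho(x)x$. By the definition of $\mathcal{N}_{\Gamma}$ (with directions taken in $\bar{\Omega}^{\ast}$), $m\in\mathcal{N}_{\Gamma}(x)$, so $x\in\mathcal{T}_{\Gamma}(m)$. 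If $m$ were in $F$, then $x\in\mathcal{T}_{\Gamma}(F)$, which is false. Hence $m\in F^{c}$ and $x\in\mathcal{T}_{\Gamma}(F^{c})$.

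For (b) I follow the standard scheme of Gutiérrez--Huang.

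\textbf{Membership of $\bar{\Omega}^{\ast}$ and countable unions.} We have $\mathcal{T}_{\Gamma}(\bar{\Omega}^{\ast})=\bar{\Omega}$, which is measurable, so $\bar{\Omega}^{\ast}\in\mathcal{M}$. The trace map commutes with unions, $\mathcal{T}_{\Gamma}(\bigcup F_i)=\bigcup\mathcal{T}_{\Gamma}(F_i)$, so $\mathcal{M}$ is closed under countable unions.

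\textbf{Complements.} This is the main step. Write
$$\mathcal{T}_{\Gamma}(F^{c})=[\mathcal{T}_{\Gamma}(F)]^{c}\cup\big(\mathcal{T}_{\Gamma}(F^{c})\cap\mathcal{T}_{\Gamma}(F)\big),$$
which holds by part (a). Any $x$ in the intersection has two distinct directions $m\in F$ and $m'\in F^{c}$ in $\mathcal{N}_{\Gamma}(x)$. I then argue that such an $x$ is a singular point of $\rho$.

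Suppose $\rho$ were differentiable at $x$, where $x$ is interior to $\bar{\Omega}$ (the boundary of $\Omega$ is assumed null). Then both functions $b/(1-\kappa m\cdot y)$ and $b'/(1-\kappa m'\cdot y)$ touch $\rho$ from below at $x$, so their tangential gradients at $x$ coincide with that of $\rho$. Equivalently, the normal $\nu$ to $\Gamma$ at $\rho(x)x$ is determined. Snell's law (\ref{2.3}) with $\lambda=\Phi(x\cdot\nu)$ then gives $m=(x-\lambda\nu)/\kappa$ uniquely, contradicting $m\neq m'$. This is the content of Remark \ref{rem3.1}, which I will justify in this way.

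By Lemma \ref{lem3.1} the set of singular points is null, so the intersection is Lebesgue null. Therefore $\mathcal{T}_{\Gamma}(F^{c})$ is measurable whenever $\mathcal{T}_{\Gamma}(F)$ is.

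\textbf{Borel sets.} If $K\subseteq\bar{\Omega}^{\ast}$ is closed, it is compact. Take $x_n\in\mathcal{T}_{\Gamma}(K)$ with $x_n\to x_0$ and choose $m_n\in K$ with $H(m_n,b_n)$ supporting $\Gamma$ at $\rho(x_n)x_n$. The $b_n=\rho(x_n)(1-\kappa m_n\cdot x_n)$ are bounded, since $\rho$ is bounded by continuity. Pass to a subsequence with $m_n\to m_0\in K$ and $b_n\to b_0$. By (\ref{3.1}) the denominators $1-\kappa m_n\cdot y\geq-\varepsilon\kappa$ stay bounded below, so the limits pass as in Lemma \ref{lem3.3}: $H(m_0,b_0)$ supports $\Gamma$ at $\rho(x_0)x_0$. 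Thus $\mathcal{T}_{\Gamma}(K)$ is closed, hence measurable. A $\sigma$-algebra containing all closed sets contains all Borel sets.

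The main obstacle is the complement step, namely making rigorous that two distinct supporting directions force non-differentiability of $\rho$ at $x$. I plan to do this by differentiating $\rho(y)(1-\kappa m\cdot y)$ along tangent directions at $x$: if $\rho$ is differentiable at $x$, this pins down the tangential component of $m$ in terms of $\nabla\rho(x)$ and $\rho(x)$. Combined with $|m|=1$ and the sign condition (\ref{3.1}), which fixes the normal component, this makes $m$ unique.
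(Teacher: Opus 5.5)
Your proposal is correct and follows the paper's proof essentially step for step. It uses the same set-theoretic argument for (a), the same decomposition $\mathcal{T}_{\Gamma}(F^{c})=[\mathcal{T}_{\Gamma}(F)]^{c}\cup(\mathcal{T}_{\Gamma}(F^{c})\cap\mathcal{T}_{\Gamma}(F))$ with a null intersection for complements, and the same subsequence argument showing $\mathcal{T}_{\Gamma}(K)$ is closed for closed $K$.

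The one place you go beyond the paper is in actually justifying why that intersection is null; the paper simply asserts it, implicitly via Remark \ref{rem3.1} and Lemma \ref{lem3.1}. Your differentiation argument works because, when $\kappa<-1$, the resulting quadratic in $1-\kappa m\cdot x$ has roots of opposite sign, so (\ref{3.1}) selects a unique $m$. Your extra hypothesis $|\partial\Omega|=0$ could even be dropped by extending $\rho$ to a neighborhood of $\bar{\Omega}$ as the supremum of its supporting hyperboloids.
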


\begin{proof}
  $(a)$ If $x \in [\mathcal{T}_{\Gamma}(F)]^{c}$, then $\mathcal{N}_{\Gamma}(x) \cap F = \emptyset$, then $\mathcal{N}_{\Gamma}(x) \cap F^{c} \neq \emptyset$. Then by the definition of trace mapping (\ref{3.3}), we have $x \in \mathcal{T}_{\Gamma}(F^{c})$.

  $(b)$ We first prove that the set $\mathcal{M}$ is a $\sigma$-algebra.

  Obviously, we have $\mathcal{T}_{\Gamma}(\emptyset) = \emptyset$, $\mathcal{T}_{\Gamma}(\bar{\Omega}^{\ast}) = \bar{\Omega}$.
  For $F_{i} \in \mathcal{M}$, we have
  $$\mathcal{T}_{\Gamma}(\bigcup\limits_{i=1}^{\infty}F_{i}) = \bigcup\limits_{m \in \bigcup\limits_{i=1}^{\infty}F_{i}}\{x \in \bar{\Omega};~m\in \mathcal{N}_{\Gamma}(x)\} = \bigcup\limits_{i=1}^{\infty}\bigcup\limits_{m \in F_{i}}\{x \in \bar{\Omega};~m\in \mathcal{N}_{\Gamma}(x)\} =\bigcup\limits_{i=1}^{\infty} \mathcal{T}_{\Gamma}(F_{i}),$$
  so $\mathcal{M}$ is closed under countable union. Also for $F \in \mathcal{M}$, using $(a)$, we can get
  $$\mathcal{T}_{\Gamma}(F^{c}) = (\mathcal{T}_{\Gamma}(F^{c}) \cap [\mathcal{T}_{\Gamma}(F)]^{c}) \cup (\mathcal{T}_{\Gamma}(F^{c}) \cap \mathcal{T}_{\Gamma}(F)) = [\mathcal{T}_{\Gamma}(F)]^{c} \cup [\mathcal{T}_{\Gamma}(F^{c}) \cap \mathcal{T}_{\Gamma}(F)].$$
  Since $\vert \mathcal{T}_{\Gamma}(F^{c}) \cap \mathcal{T}_{\Gamma}(F) \vert = 0$ and $\mathcal{T}_{\Gamma}(F)$ is measurable, then $\mathcal{T}_{\Gamma}(F^{c})$ is measurable, hence $\mathcal{M}$ is closed under taking complements.

  Next, we prove that the set $\mathcal{M}$ contains all Borel sets in $\bar{\Omega}^{\ast}$.

  Indeed, choose a closed set $K \subseteq \bar{\Omega}^{\ast}$, obviously $K$ is compact. Take $x_{i} \in \mathcal{T}_{\Gamma}(K)$, then there exist $m_{i} \in \mathcal{N}_{\Gamma}(x_{i}) \cap K$. Suppose that $H(m_{i},b_{i})$ supports $\Gamma$ at $\rho(x_{i})x_{i}$, then we have $\rho(x_{i}) = \dfrac{b_{i}}{1 - \kappa m_{i}\cdot x_{i}}$ and $\rho(x) \geq \dfrac{b_{i}}{1 - \kappa m_{i}\cdot x}$ for all $x\in \bar{\Omega}$. For $1 - \kappa m_{i}\cdot x >0$ and $1 - \kappa m_{i}\cdot x_{i} >0$, we have
  $$\rho(x)(1 - \kappa m_{i}\cdot x) \geq b_{i} \quad \text{and} \quad \rho(x_{i})(1 - \kappa m_{i}\cdot x_{i}) = b_{i} ~\text{for all} ~x\in \bar{\Omega}.$$
  Assume that there exist constants $0 < a_{1} \leq a_{2}$, such that $a_{1} \leq \rho(x) \leq a_{2}$ on $\bar{\Omega}$, then we have
  $$b_{i} \leq \rho(x)(1 - \kappa m_{i}\cdot x) \leq a_{2}(1 - \kappa m_{i}\cdot x) \leq a_{2}(1 - \kappa),$$
  and
  $$b_{i} = \rho(x_{i})(1 - \kappa m_{i}\cdot x_{i}) \geq a_{1}(1 - \kappa m_{i}\cdot x_{i}) \geq a_{1}(-\kappa \varepsilon).$$
  Thus we have
  $$-a_{1}\kappa \varepsilon \leq b_{i} \leq a_{2}(1 - \kappa),$$
  so $b_{i}$s are bounded. Assume that there exist subsequences $x_{i}\rightarrow x_{0} \in \bar{\Omega}$, $m_{i}\rightarrow m_{0} \in K$ and $b_{i} \rightarrow b_{0}$ as $i \rightarrow \infty$. Then for $x \in \bar{\Omega}$ and all $i$, we have

  $$\rho(x)(1 - \kappa m_{i} \cdot x) \geq b_{i} \quad \text{and} \quad \rho(x_{i})(1 - \kappa m_{i} \cdot x_{i}) = b_{i}.$$
  Taking the limit as $i \rightarrow \infty$, we have

  $$\rho(x)(1 - \kappa m_{0} \cdot x) \geq b_{0} \quad \text{and} \quad \rho(x_{0})(1 - \kappa m_{0} \cdot x_{0}) = b_{0}.$$
  Hence $H(m_{0},b_{0})$ supports $\Gamma$ at $\rho(x_{0})x_{0}$ and $x_{0} \in \mathcal{T}_{\Gamma}(m_{0})$. Consequently, $\mathcal{T}_{\Gamma}(K)$ is compact, hence $\mathcal{M}$ contains all Borel sets in $\bar{\Omega}^{\ast}$.
\end{proof}

\begin{lemma}
  Suppose that $H(m_{k},b_{k})$ is a sequence of semi-hyperboloid, and $m_{k} \rightarrow m_{0}$, $b_{k} \rightarrow b_{0}$ as $k\rightarrow \infty$. Let $z_{k} \in H(m_{k},b_{k})$ with $z_{k} \rightarrow z_{0}$ as $k\rightarrow \infty$. Then $z_{0} \in H(m_{0},b_{0})$, and the normal $\nu_{k}(z_{k})$ to the semi-hyperboloid $H(m_{k},b_{k})$ at $z_{k}$ satisfies $\nu_{k}(z_{k}) \rightarrow \nu(z_{0})$ the normal to the semi-hyperboloid $H(m_{0},b_{0})$ at the point $z_{0}$.
  \label{lem3.5}
\end{lemma}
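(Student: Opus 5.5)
The plan is to work directly with the polar representation of the semi-hyperboloids. Write $z_{k} = \rho_{k}(x_{k})x_{k}$ with $x_{k} = z_{k}/|z_{k}| \in S^{n-1}$ and $\rho_{k}(x) = \dfrac{b_{k}}{1 - \kappa m_{k}\cdot x}$. We need $z_{0}\neq 0$ to define $x_{0} = z_{0}/|z_{0}|$. This holds because $|z_{k}| = b_{k}/(1-\kappa m_{k}\cdot x_{k}) \geq b_{k}/(1-\kappa)$, and $b_{k}\to b_{0}>0$ keeps these radii bounded away from zero. Hence $x_{k}\to x_{0}$ and $|z_{k}|\to|z_{0}|$.

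For membership, I would pass to the limit in the identity $|z_{k}|(1-\kappa m_{k}\cdot x_{k}) = b_{k}$, which gives $|z_{0}|(1-\kappa m_{0}\cdot x_{0}) = b_{0}$. The constraint $x_{k}\cdot m_{k}\geq 1/\kappa$ is a closed condition, so it passes to the limit as $x_{0}\cdot m_{0}\geq 1/\kappa$. Since $b_{0}>0$, this yields $1-\kappa m_{0}\cdot x_{0}>0$, and therefore $z_{0} = \rho_{0}(x_{0})x_{0}\in H(m_{0},b_{0})$.

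For the normals, I would use the Snell law relation (\ref{2.3}), which the hyperboloid satisfies by construction. Alternatively, one can compute the normal of the level set $\{z:\ |z| - \kappa m\cdot z = b\}$ directly. Its gradient is
\begin{equation*}
\nabla\bigl(|z| - \kappa m\cdot z\bigr) = \frac{z}{|z|} - \kappa m,
\end{equation*}
so
\begin{equation*}
\nu_{k}(z_{k}) = \frac{x_{k}-\kappa m_{k}}{|x_{k}-\kappa m_{k}|},
\end{equation*}
oriented toward medium \textsc{II} as in (\ref{2.3}). The denominator satisfies
\begin{equation*}
|x-\kappa m|^{2} = 1 - 2\kappa\, x\cdot m + \kappa^{2} \geq (|\kappa|-1)^{2} > 0,
\end{equation*}
because $\kappa<-1$. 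So the map $(x,m)\mapsto (x-\kappa m)/|x-\kappa m|$ is continuous on $S^{n-1}\times S^{n-1}$, and $\nu_{k}(z_{k})\to (x_{0}-\kappa m_{0})/|x_{0}-\kappa m_{0}| = \nu(z_{0})$.

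The only delicate points are the following:
\begin{enumerate}[(i)]
\item ruling out $z_{0}=0$, which needs $b_{0}>0$ (implicit in the definition, as $b>0$ throughout);
\item confirming the orientation of the normal is consistent, which follows from (\ref{2.3}) with $\lambda>0$;
\item checking that the defining function is smooth at $z_{0}$, which holds since $z_{0}\neq0$.
\end{enumerate}
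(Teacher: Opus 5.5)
Your proposal is correct and follows the paper's route: the paper also writes $H(m_k,b_k)$ as the level set $|z|-\kappa m_k\cdot z=b_k$, takes the gradient $z/|z|-\kappa m_k$ as the normal, and passes to the limit. Your extra checks fill in details that the paper's one-line argument leaves implicit: that $z_0\neq 0$ (using $b_0>0$), that $z_0\in H(m_0,b_0)$, and that $|x-\kappa m|\geq|\kappa|-1>0$, which makes the normalized normal continuous.
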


\begin{proof}
  The Cartesian coordinate of the equation of $H(m_{k},b_{k})$ is $\vert z \vert - \kappa m_{k}z = b_{k}$, then the normal vector at $z$ is $\nu_{k}(z) = \dfrac{z}{\vert z \vert} - \kappa m_{k}$, so we have
  $$\nu_{k}(z_{k}) = \frac{z_{k}}{\vert z_{k} \vert} - \kappa m_{k} \rightarrow \frac{z_{0}}{\vert z_{0} \vert} - \kappa m = \nu(z_{0}).$$
\end{proof}

\begin{lemma}
  Assume that $\Gamma_{k} = \{\rho_{k}(x)x;~x\in \bar{\Omega}\}$, $k \geq 1$ is a sequence of refractors from $\bar{\Omega}$ to $\bar{\Omega}^{\ast}$. Suppose that $0<a_{1}\leq \rho_{k} \leq a_{2}$ and $\rho_{k} \rightarrow \rho$ uniformly on $\bar{\Omega}$. Then we have
  
   (a) $\Gamma := \{\rho(x)x;~x\in \bar{\Omega}\}$ is a refractor from $\bar{\Omega}$ to $\bar{\Omega}^{\ast}$;
  
   (b) For any compact set $K\subseteq \bar{\Omega}^{\ast}$,
  $$\varlimsup_{k\rightarrow \infty} \mathcal{T}_{\Gamma_{k}}(K) \subseteq \mathcal{T}_{\Gamma}(K);$$
  
   (c) For any open set $G \subseteq \bar{\Omega}^{\ast}$,
  $$\mathcal{T}_{\Gamma}(G) \subseteq \varliminf_{k\rightarrow \infty}\mathcal{T}_{\Gamma_{k}}(G) \cup E,$$
  where $E$ is the singular set of $\Gamma$.
  \label{lem3.6}
\end{lemma}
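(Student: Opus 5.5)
The plan is a compactness argument on the parameters of supporting hyperboloids, in the spirit of the proofs of Lemma \ref{lem3.3} and Lemma \ref{lem3.4}(b). The key preliminary is a uniform bound on the parameters. If $H(m,b)$ supports $\Gamma_k$ at $\rho_k(x)x$, then
\[
b=\rho_k(x)(1-\kappa m\cdot x)\ge a_1(-\kappa\varepsilon)
\]
by (\ref{3.1}). Also $b\le \rho_k(y)(1-\kappa m\cdot y)\le a_2(1-\kappa)$ for every $y\in\bar\Omega$. So all parameters $b$ lie in the fixed interval $[-a_1\kappa\varepsilon,\ a_2(1-\kappa)]$, and all directions $m$ lie in the compact set $\bar\Omega^{\ast}$.

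\textbf{Part (a).} Fix $x_0\in\bar\Omega$ and take $H(m_k,b_k)$ supporting $\Gamma_k$ at $\rho_k(x_0)x_0$. By the bounds above we may pass to a subsequence with $m_k\to m_0\in\bar\Omega^{\ast}$ and $b_k\to b_0>0$. The relations
\[
\rho_k(x)(1-\kappa m_k\cdot x)\ge b_k \quad\text{for all } x\in\bar\Omega, \qquad \rho_k(x_0)(1-\kappa m_k\cdot x_0)=b_k
\]
pass to the limit by uniform convergence. Hence $H(m_0,b_0)$ supports $\Gamma$ at $\rho(x_0)x_0$. Continuity of $\rho$ is automatic as a uniform limit of continuous functions, so $\Gamma$ is a refractor.

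\textbf{Part (b).} Let $x_0\in\varlimsup_k\mathcal{T}_{\Gamma_k}(K)$. Then there is a subsequence $k_j$ with $x_0\in\mathcal{T}_{\Gamma_{k_j}}(K)$, so $x_0$ has a supporting hyperboloid $H(m_j,b_j)$ to $\Gamma_{k_j}$ with $m_j\in K$. Running the same limit argument, with compactness of $K$ giving $m_0\in K$, shows that $H(m_0,b_0)$ supports $\Gamma$ at $x_0$. Hence $x_0\in\mathcal{T}_\Gamma(K)$.

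\textbf{Part (c).} Let $x_0\in\mathcal{T}_\Gamma(G)\setminus E$, with $m_0\in\mathcal N_\Gamma(x_0)\cap G$. Since $x_0$ is not singular, Remark \ref{rem3.1} gives $\mathcal N_\Gamma(x_0)=\{m_0\}$. Argue by contradiction: if $x_0\notin\varliminf_k\mathcal{T}_{\Gamma_k}(G)$, there is a subsequence $k_j$ with $x_0\notin \mathcal{T}_{\Gamma_{k_j}}(G)$. Then any $m_j\in\mathcal N_{\Gamma_{k_j}}(x_0)$, which exists because each $\Gamma_k$ is a refractor, lies in $\bar\Omega^{\ast}\setminus G$, and this set is compact. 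Extracting a further subsequence and repeating the limit argument of (a) produces $m'\in\bar\Omega^{\ast}\setminus G$ with $m'\in\mathcal N_\Gamma(x_0)$. Uniqueness then forces $m'=m_0\in G$, a contradiction.

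The main obstacle is this uniqueness step: we must justify that a regular point admits only one supporting hyperboloid. Remark \ref{rem3.1} is exactly the needed statement. Underneath it is the fact that the direction $m$ is recovered from the normal via Snell's law $x-\kappa m=\lambda\nu$, together with Lemma \ref{lem3.5}. If needed, I would verify the remark directly: two supporting hyperboloids with different $m$ touching at the same point have different normals there, which is incompatible with differentiability of $\rho$ at $x_0$.
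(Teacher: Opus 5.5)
Your proposal is correct and follows essentially the paper's argument. The uniform bound $-a_1\kappa\varepsilon\le b\le a_2(1-\kappa)$, together with compactness of $\bar\Omega^{\ast}$ (resp.\ $K$), gives (a) and (b). Your pointwise contradiction in (c) is the unpacked form of the paper's complement argument: (b) applied to the compact set $\bar\Omega^{\ast}\setminus G$, combined with Lemma \ref{lem3.4}(a) and the fact that $\mathcal{T}_\Gamma(G)\cap\mathcal{T}_\Gamma(\bar\Omega^{\ast}\setminus G)\subseteq E$ (Remark \ref{rem3.1}); your version is in fact cleaner than the paper's set manipulations there.
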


\begin{proof}
  $(a)$ Obviously we have $\rho \in C(\bar{\Omega})$ and $\rho >0$. Fix $x_{0} \in \bar{\Omega}$, then there exist $m_{k} \in \bar{\Omega}^{\ast}$ and $b_{k} >0$, such that $H(m_{k},b_{k})$ supports $\Gamma_{k}$ at $\rho(x_{0})x_{0}$, thus
  $$\rho_{k}(x) \geq \frac{b_{k}}{1 - \kappa m_{k}\cdot x} \quad \text{and} \quad \rho_{k}(x_{0}) = \frac{b_{k}}{1 - \kappa m_{k}\cdot x_{0}}~\text{for all} ~x\in \bar{\Omega}.$$
  So for all $x\in \bar{\Omega}$ and $k$, we have
  $$\frac{b_{k}}{1 - \kappa m_{k}\cdot x_{0}} \geq a_{1} \quad \text{and} \quad \frac{b_{k}}{1 - \kappa m_{k}\cdot x}\leq a_{2},$$
  hence
  $$a_{1}(1 - \kappa m_{k}\cdot x_{0}) \leq b_{k} \leq a_{2}(1 - \kappa m_{k}\cdot x).$$
  Combing with (\ref{3.1}), we have
  $$-a_{1}\varepsilon \kappa \leq b_{k} \leq a_{2}(1 - \kappa)$$
  for all $k$. Then there exist $m_{0} \in \bar{\Omega}^{\ast}$ and $b_{0} >0$, such that $m_{k}\rightarrow m_{0}$ and $b_{k}\rightarrow b_{0}$. Hence we have
  $$\rho(x_{0}) = \lim_{k\rightarrow \infty}\rho_{k}(x_{0}) = \lim_{k\rightarrow \infty}\frac{b_{k}}{1 - \kappa m_{k}\cdot x_{0}} = \frac{b_{0}}{1 - \kappa m_{0}\cdot x_{0}}$$
  and
  $$\rho(x) = \lim_{k\rightarrow \infty}\rho_{k}(x) \geq \lim_{k\rightarrow \infty}\frac{b_{k}}{1 - \kappa m_{k}\cdot x} = \frac{b_{0}}{1 - \kappa m_{0}\cdot x}.$$
  for all $x\in \bar{\Omega}$, hence $H(m_{0},b_{0})$ supports $\Gamma$ at $\rho(x_{0})x_{0}$. So $\Gamma$ is a refractor.

   $(b)$ Let $x_{0} \in \varlimsup\limits_{k\rightarrow \infty} \mathcal{T}_{\Gamma_{k}}(K)$. Without loss of generality, we assume that $x_{0} \in \mathcal{T}_{\Gamma_{k}}(K)$ for all $k \geq 1$, then there exist $m_{k} \in \mathcal{N}_{\Gamma_{k}(x_{0})} \cap K$ and $b_{k} >0$, such that
   $$\rho_{k}(x_{0}) = \frac{b_{k}}{1 - \kappa m_{k}\cdot x_{0}} \quad \text{and} \quad \rho_{k}(x) \geq \frac{b_{k}}{1 - \kappa m_{k}\cdot x}$$
   for all $x \in \bar{\Omega}$. We may assume that there exist $m_{0} \in K$ and $b_{0} >0$, such that $m_{k}\rightarrow m_{0}$ and $b_{k}\rightarrow b_{0}$, then as in proof of $(a)$, $H(m_{0},b_{0})$ supports $\Gamma$ at $\rho(x_{0})x_{0}$, hence $x_{0} \in \mathcal{T}_{\Gamma}(m_{0})$. Consequently, $x \in \mathcal{T}_{\Gamma}(K)$.

   $(c)$ Suppose that $G \subseteq \bar{\Omega}^{\ast}$ is a open set, then $G^{c}$ is a compact set. From $(b)$, we have
   $$\varlimsup_{k\rightarrow \infty}\mathcal{T}_{\Gamma_{k}}(G^{c}) \subseteq \mathcal{T}_{\Gamma}(G^{c}).$$
   Besides, by Lemma \ref{lem3.4}, we also have
   $$\varlimsup_{k\rightarrow \infty}[\mathcal{T}_{\Gamma_{k}}(G)]^{c} \subseteq \varlimsup_{k\rightarrow \infty}[\mathcal{T}_{\Gamma_{k}}(G)]^{c} \cup [\mathcal{T}_{\Gamma_{k}}(G) \cap \mathcal{T}_{\Gamma_{k}}(G^{c})] = \varlimsup_{k\rightarrow \infty}\mathcal{T}_{\Gamma_{k}}(G^{c}).$$
   From $(b)$, we have
   \begin{equation}\label{3.4}
     \varlimsup_{k\rightarrow \infty}[\mathcal{T}_{\Gamma_{k}}(G)]^{c} \subseteq \mathcal{T}_{\Gamma}(G^{c}) = [\mathcal{T}_{\Gamma}(G)]^{c} \cup [\mathcal{T}_{\Gamma}(G) \cap \mathcal{T}_{\Gamma}(G^{c})].
   \end{equation}
   Taking complements in (\ref{3.4}), we have
   $$\varliminf_{k\rightarrow \infty}\mathcal{T}_{\Gamma_{k}}(G) \supseteq \mathcal{T}_{\Gamma}(G) \cap [\mathcal{T}_{\Gamma}(G) \cap \mathcal{T}_{\Gamma}(G^{c})].$$
   Hence
   $$\mathcal{T}_{\Gamma}(G) \cap [\varliminf_{k\rightarrow \infty}\mathcal{T}_{\Gamma_{k}}(G)]^{c} \cup E \subseteq \varliminf_{k\rightarrow \infty}\mathcal{T}_{\Gamma_{k}}(G) \cup E.$$
   However, $\varliminf\limits_{k\rightarrow \infty}\mathcal{T}_{\Gamma_{k}}(G) \subseteq E$, hence
   $$\mathcal{T}_{\Gamma}(G) \subseteq \mathcal{T}_{\Gamma}(G) \cup E \subseteq \varliminf_{k\rightarrow \infty}\mathcal{T}_{\Gamma_{k}}(G) \cup E.$$

\end{proof}

\subsection{Properties of Fresnel coefficients}

Recall the Fresnel coefficients in (\ref{2.16}) and (\ref{2.17})
\begin{equation*}
  \begin{cases}
    r_{\Gamma}(x) \!\!\!\!&= \left[\dfrac{z_{2} + \kappa z_{1} - (z_{1} + \kappa z_{2}) x \cdot m}{z_{2} - \kappa z_{1} + (z_{1} - \kappa z_{2}) x \cdot m} \right]^{2} \dfrac{A_{\parallel}^{2}}{A_{\parallel}^{2} + A_{\bot}^{2}} \\
    \!\!\!\!&\quad+ \left[ \dfrac{z_{1} + \kappa z_{2} - (z_{2} + \kappa z_{1}) x \cdot m}{z_{1} - \kappa z_{2} + (z_{2} - \kappa z_{1}) x \cdot m}\right]^{2} \dfrac{A_{\bot}^{2}}{A_{\parallel}^{2} + A_{\bot}^{2}},  \\
    t_{\Gamma}(x) \!\!\!\!&= 1 - r_{\Gamma}(x).
  \end{cases}
\end{equation*}
We first discuss the boundedness of $r_{\Gamma}(x)$ and $t_{\Gamma}(x)$.

For simplicity, let $\sigma = \displaystyle \frac{z_{2}}{z_{1}} = \displaystyle \sqrt{\frac{\mu_{2}\epsilon_{1}}{\mu_{1}\epsilon_{2}}} > 0$ and introduce a function
\begin{equation}\label{3.5}
  \psi(t) := \left[\frac{\sigma + \kappa -(1 + \kappa \sigma)t}{\sigma - \kappa + (1 - \kappa \sigma)t}\right]^{2} \alpha + \left[\frac{1 + \kappa \sigma - (\sigma + \kappa)t}{1 - \kappa \sigma + (\sigma - \kappa)t}\right]^{2} \beta,
\end{equation}
where $\alpha = \dfrac{A_{\parallel}^{2}}{A_{\parallel}^{2} + A_{\bot}^{2}}$, $\beta = \dfrac{A_{\bot}^{2}}{A_{\parallel}^{2} + A_{\bot}^{2}}$. Then $r_{\Gamma}(x) = \psi(x \cdot m)$, $t_{\Gamma}(x) = 1- \psi(x \cdot m)$. From (\ref{2.6}), we know $t\in\left[\dfrac{1}{\kappa} + \varepsilon,1\right]$. We denote
$$p(t) = \frac{\sigma + \kappa -(1 + \kappa \sigma)t}{\sigma - \kappa + (1 - \kappa \sigma)t} \quad \text{and} \quad q(t) = \frac{1 + \kappa \sigma - (\sigma + \kappa)t}{1 - \kappa \sigma + (\sigma - \kappa)t}.$$

For $p(t)$, we have $p'(t) = \displaystyle \frac{2 \sigma (\kappa^{2} - 1)}{[\sigma - \kappa + (1 - \kappa \sigma)t]^{2}}$.  For $\kappa <-1$, then $\kappa^{2} - 1 >0$, so $p(t)$ increases on $\left[\dfrac{1}{\kappa} + \varepsilon,1\right]$. Hence 
$$p^{2}(t)_{\max} = \max\left\{p^{2}\left(\dfrac{1}{\kappa} + \varepsilon\right), p^{2}(1)\right\}.$$
We have $p^{2}(1) = \left[\dfrac{\sigma - 1}{\sigma + 1}\right]^{2}$, $p^{2}\left(\dfrac{1}{\kappa} + \varepsilon\right) = \left[\dfrac{\kappa^{2} - 1 - \varepsilon \kappa(1 + \kappa \sigma)}{1 - \kappa^{2} + \varepsilon \kappa(1 - \kappa \sigma)}\right]^{2}$. For $\varepsilon$ is small enough, then $p^{2}(t)_{\max} = p^{2}\left(\dfrac{1}{\kappa} + \varepsilon\right)$.

For $q(t)$, we have $q'(t) = \dfrac{2 \sigma (\kappa^{2} - 1)}{[1 - \kappa \sigma + (\sigma - \kappa)t]^{2}}$. For $\kappa <-1$, then $\kappa^{2} - 1 >0$, then $q(t)$ increases on $\left[\dfrac{1}{\kappa} + \varepsilon,1\right]$. Hence 
$$q^{2}(t)_{\max} = \max\left\{q^{2}\left(\dfrac{1}{\kappa} + \varepsilon\right), q^{2}(1)\right\}.$$
We have $q^{2}(1) = \left[-\dfrac{\sigma - 1}{\sigma + 1}\right]^{2}$, $q^{2}\left(\dfrac{1}{\kappa} + \varepsilon\right) = \left[\dfrac{\sigma (\kappa^{2} - 1) - \varepsilon \kappa^{2}(\sigma + 1)}{\sigma (1 - \kappa^{2}) + \varepsilon \kappa (\sigma - \kappa)}\right]^{2}$. For $\varepsilon$ is small enough, then $q^{2}(t)_{\max} = q^{2}\left(\dfrac{1}{\kappa} + \varepsilon\right)$.

From above analysis, we obtain the following proposition:
\begin{proposition}
  Suppose that $\bar{\Omega}$ and $\bar{\Omega}^{\ast}$ satisfy (\ref{2.6}), $\Gamma$ is a refractor from $\bar{\Omega}$ to $\bar{\Omega}^{\ast}$, then there exists a constant $C_{\varepsilon}$ associated with $\varepsilon$, such that $r_{\Gamma}(x) \leq C_{\varepsilon}$ and $C_{\varepsilon} < t_{\Gamma}(x) <1$.
  \label{prop3.1}
\end{proposition}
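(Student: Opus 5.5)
We write $r_{\Gamma}(x)=\psi(x\cdot m)$ with $\psi$ as in (\ref{3.5}), where $m\in\mathcal{N}_{\Gamma}(x)$ is the refracted direction. By (\ref{2.6}) we have $t=x\cdot m\in I_\varepsilon:=\left[\frac{1}{\kappa}+\varepsilon,1\right]$. Since $\psi(t)=\alpha p^{2}(t)+\beta q^{2}(t)$ with $\alpha,\beta\ge 0$ and $\alpha+\beta=1$, it suffices to bound $p^{2}$ and $q^{2}$ uniformly on $I_\varepsilon$ by a constant strictly less than $1$. We then set $C_{\varepsilon}:=\max\{\max_{I_\varepsilon}p^{2},\max_{I_\varepsilon}q^{2}\}$, which gives $r_{\Gamma}\le C_{\varepsilon}$ and $t_{\Gamma}=1-r_{\Gamma}\ge 1-C_{\varepsilon}$.

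\textbf{Step 1: the denominators do not vanish on $I_\varepsilon$.} The denominator of $p$ is affine in $t$, with values $\sigma+1>0$ at $t=1$ and $\frac{\kappa^{2}-1}{-\kappa}\cdot\frac{1-\kappa\sigma}{\kappa}\cdot(\cdots)$ at the left endpoint. I would check directly that $\sigma-\kappa+(1-\kappa\sigma)t=(1-\kappa\sigma)\bigl(t-\tfrac{1}{\kappa}\bigr)+\tfrac{\sigma(\kappa^{2}-1)}{-\kappa}\cdot(\ldots)$. It is cleaner to note that at $t=1/\kappa$ the denominator equals $\sigma(1-\kappa^{2})/(-\kappa)\cdot(-1)$, that is, $\sigma(\kappa^{2}-1)/(-\kappa)\cdot\mathrm{sign}$, and to use the monotonicity computation ($p'$ of one sign) to conclude that $p$ is continuous and increasing on $I_\varepsilon$. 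The same holds for $q$. Hence $\max_{I_\varepsilon}p^{2}=\max\{p^{2}(\frac1\kappa+\varepsilon),p^{2}(1)\}$, and likewise for $q^{2}$, exactly as computed before the statement.

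\textbf{Step 2: each endpoint value is strictly less than $1$.} At $t=1$ we get $p^{2}(1)=q^{2}(1)=\bigl(\frac{\sigma-1}{\sigma+1}\bigr)^{2}<1$ because $\sigma>0$. At the left endpoint, the key observation is that $p(1/\kappa)=\pm1$ and $q(1/\kappa)=\pm1$: substituting gives numerator and denominator equal to $\pm\sigma(\kappa^{2}-1)/\kappa$, which corresponds to grazing incidence and total reflection. Since $p$ is strictly monotone, $|p(\frac1\kappa+\varepsilon)|<1$ for small $\varepsilon>0$. More precisely, the explicit formulas give $|p|<1$ if and only if $|\kappa^{2}-1-\varepsilon\kappa(1+\kappa\sigma)|<|1-\kappa^{2}+\varepsilon\kappa(1-\kappa\sigma)|$, which I would verify via $a^{2}-b^{2}=(a-b)(a+b)$. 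For $q$ the analogous check reduces to $\varepsilon\kappa^{2}(\sigma+1)$ versus $\varepsilon\kappa(\sigma-\kappa)$ terms. So $C_{\varepsilon}<1$.

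\textbf{Step 3: the remaining inequalities.} The statement's lower bound $C_\varepsilon<t_\Gamma$ should be read, and proved, with $1-C_{\varepsilon}$, which is positive by Step 2. The upper bound $t_{\Gamma}<1$ needs $r_{\Gamma}>0$, that is, $p$ and $q$ not vanishing simultaneously. The zeros are $t_p=\frac{\sigma+\kappa}{1+\kappa\sigma}$ and $t_q=\frac{1+\kappa\sigma}{\sigma+\kappa}$, and they coincide only if $t^{2}=1$. If $\alpha,\beta>0$ this already gives $r_{\Gamma}>0$, except in degenerate cases ($\alpha=0$ or $\beta=0$, the Brewster angle), which I would handle by noting that $\psi>0$ unless $t$ equals that single zero. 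I expect Step 2 to be the main obstacle: the sign bookkeeping for $\kappa<-1$, and making ``$\varepsilon$ small'' precise so that $C_{\varepsilon}<1$ holds uniformly.
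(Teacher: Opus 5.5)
Your Steps 1 and 2 follow the paper's argument: the monotonicity-and-endpoints computation preceding the statement, using $p'>0$ and $q'>0$ for $\kappa<-1$. Your grazing-incidence observation supplies what the paper leaves implicit, namely $C_\varepsilon<1$. Tighten three things:

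\begin{enumerate}
\item The denominators $\sigma-\kappa+(1-\kappa\sigma)t$ and $1-\kappa\sigma+(\sigma-\kappa)t$ are affine with positive slopes. They equal $(1-\kappa^{2})/\kappa>0$ and $\sigma(1-\kappa^{2})/\kappa>0$ at $t=1/\kappa$, so they are positive on $[1/\kappa,1]$. You cannot get this from the sign of $p'$, because that formula presupposes there is no pole.
\item One has exactly $p(1/\kappa)=q(1/\kappa)=-1$, and the sign matters: an increasing function starting at $+1$ would exceed $1$. So $p$ increases from $-1$ to $\frac{\sigma-1}{\sigma+1}$ and $q$ from $-1$ to $\frac{1-\sigma}{1+\sigma}$. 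This gives $|p|,|q|<1$ on $(1/\kappa,1]$, hence $C_\varepsilon<1$ for every admissible $\varepsilon$. No smallness of $\varepsilon$ is needed, so what you call the main obstacle is not one.
\item If you do the $a^{2}-b^{2}$ check for $q$, note that after clearing the factor $\kappa$ the numerator of $q(\frac1\kappa+\varepsilon)$ is $\sigma(\kappa^{2}-1)-\varepsilon\kappa(\sigma+\kappa)$. This is not the expression printed before the statement.
\end{enumerate}

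Your reading of the lower bound as $t_\Gamma\ge 1-C_\varepsilon$ is right. The literal $r_\Gamma\le C_\varepsilon<t_\Gamma=1-r_\Gamma$ would force $r_\Gamma<\frac12$, which fails near grazing incidence. The $1-C_\varepsilon$ form is the one actually used in Lemma~\ref{lem3.8} and Theorem~\ref{thm3.3}. For strictness, take $C_\varepsilon$ slightly above the maximum but still below $1$.

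The genuine gap is Step 3. From ``$t_p$ and $t_q$ coincide only if $t^{2}=1$'' you cannot conclude $r_\Gamma>0$ when $\alpha,\beta>0$. The point $t=1$ lies in $I_\varepsilon$, and $t_p=t_q=1$ exactly when $\sigma=1$. In that case $p(1)=q(1)=0$, so $r_\Gamma=0$ at normal incidence ($x\cdot m=1$).

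In the degenerate polarizations it is worse. $p$ has a zero in $(1/\kappa,1)$ when $\sigma>1$, and $q$ has one when $\sigma<1$. So with $\beta=0$ (respectively $\alpha=0$), $r_\Gamma$ vanishes at the Brewster direction once $\varepsilon$ is small enough for that zero to lie in $I_\varepsilon$. Your remark that $\psi>0$ ``unless $t$ equals that single zero'' does not prove $t_\Gamma<1$; it identifies where $t_\Gamma=1$.

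So the strict upper bound cannot be proved as stated. What holds is $t_\Gamma\le 1$, with strict inequality everywhere when $\alpha\beta>0$ and $\sigma\neq1$, and otherwise failing at most for one value of $x\cdot m$. The paper's argument does not address this part at all, and nothing later uses the strict inequality. The correct fix is therefore to weaken that clause of the statement, not to patch your Step 3.
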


Next, we discuss the continuity of $t_{\Gamma}(x)$.
\begin{proposition}
  Suppose that $\Gamma = \{\rho(x)x;~x\in \bar{\Omega}\}$ is refractor from $\bar{\Omega}$ to $\bar{\Omega}^{\ast}$ and $E$ is the singular set of $\Gamma$, then $t_{\Gamma}(x)$ is continuous on $\bar{\Omega}\backslash E$.
  \label{prop3.2}
\end{proposition}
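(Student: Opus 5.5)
The plan is to use that $t_{\Gamma}(x)=1-\psi(x\cdot m)$, where $m$ is the (unique) element of $\mathcal{N}_{\Gamma}(x)$ at a nonsingular point and $\psi$ is the function in (\ref{3.5}). Continuity of $t_{\Gamma}$ on $\bar{\Omega}\backslash E$ then reduces to two facts: $\psi$ is continuous on $\left[\frac{1}{\kappa}+\varepsilon,1\right]$, and the map $x\mapsto m(x)$, $\{m(x)\}=\mathcal{N}_{\Gamma}(x)$, is continuous on $\bar{\Omega}\backslash E$.

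\emph{Continuity of $\psi$.} The denominators of $p$ and $q$ must not vanish on $\left[\frac{1}{\kappa}+\varepsilon,1\right]$. They are affine in $t$; at $t=1$ they equal $(\sigma+1)(1-\kappa)>0$, and at $t=\frac{1}{\kappa}+\varepsilon$ they equal the expressions computed before Proposition \ref{prop3.1}. Physically these denominators are $(z_{2}x+z_{1}m)\cdot(x-\kappa m)$ and $(z_{1}x+z_{2}m)\cdot(x-\kappa m)$, i.e.\ positive multiples of $z_{2}\cos\theta_i+z_1\cos\theta_t$ and the analogous term. I would check their sign directly using $x\cdot\nu>0$, $m\cdot\nu>0$ for the refracted configuration, which (\ref{3.1}) guarantees. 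So $p$ and $q$ are rational with nonvanishing denominators, and $\psi$ is continuous.

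\emph{Continuity of $m(x)$ off $E$.} By Remark \ref{rem3.1}, at $x\notin E$ the supporting hyperboloid is unique, so $\mathcal{N}_{\Gamma}(x)=\{m(x)\}$. Take $x_k\to x_0$ with $x_k,x_0\in\bar{\Omega}\backslash E$, and let $H(m_k,b_k)$ support $\Gamma$ at $\rho(x_k)x_k$. As in the proof of Lemma \ref{lem3.4}(b), the $b_k$ lie in $[-a_1\kappa\varepsilon,\,a_2(1-\kappa)]$, using the bounds $0<a_1\le\rho\le a_2$ that hold because $\rho$ is continuous and positive. Extract any subsequence with $m_k\to m'$ and $b_k\to b'$. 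Passing to the limit in $\rho(x)(1-\kappa m_k\cdot x)\ge b_k$ and $\rho(x_k)(1-\kappa m_k\cdot x_k)=b_k$, using continuity of $\rho$ (Lemma \ref{lem3.1}), shows that $H(m',b')$ supports $\Gamma$ at $\rho(x_0)x_0$. Uniqueness gives $m'=m(x_0)$. Since every subsequential limit agrees, $m(x_k)\to m(x_0)$. Lemma \ref{lem3.5} also gives convergence of the normals, matching Remark \ref{rem2.3}'s description of $t_\Gamma$ via $x$ and $\nu$.

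Composing the two steps, $t_{\Gamma}(x_k)=1-\psi(x_k\cdot m(x_k))\to 1-\psi(x_0\cdot m(x_0))=t_{\Gamma}(x_0)$. The main obstacle I expect is the nonvanishing of the Fresnel denominators on the whole admissible range, which the paper asserts only implicitly through Proposition \ref{prop3.1}. I would handle it first through the physical form $z_2\cos\theta_i+z_1\cos\theta_t>0$, and fall back on the explicit endpoint values for small $\varepsilon$ if that fails.
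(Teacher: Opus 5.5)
Your proposal is correct and follows essentially the paper's route: bound the parameters $b_k$ of the supporting hyperboloids at $\rho(x_k)x_k$, extract a convergent subsequence of $(m_k,b_k)$, pass to the limit to get a supporting hyperboloid at $\rho(x_0)x_0$, and conclude by continuity of the Fresnel expression. The differences are in bookkeeping: the paper tracks normals via Lemma \ref{lem3.5} and argues upper and lower semicontinuity separately, whereas you track $m(x)$ directly and get full continuity from the subsequence principle. Your version also makes explicit two points the paper leaves implicit.
\begin{itemize}
\item The limit is identified through uniqueness of the supporting hyperboloid off $E$ (Remark \ref{rem3.1}).
\item The denominators of $p$ and $q$ do not vanish. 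This is immediate: both are affine in $t$ with positive slopes $1-\kappa\sigma$ and $\sigma-\kappa$, and at $t=\frac{1}{\kappa}$ they equal $\kappa^{-1}-\kappa>0$ and $\sigma(\kappa^{-1}-\kappa)>0$ respectively.
\end{itemize}
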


\begin{proof}
  To prove $t_{\Gamma}(x)$ is continuous on $\bar{\Omega}\backslash E$, we only need to prove $r_{\Gamma}(x)$ is continuous on $\bar{\Omega}\backslash E$. From previous analysis, we can assume that there exist constants $C_{1},C_{2}>0$, such that $C_{1} \leq \rho(x) \leq C_{2}$. From (\ref{2.16}), we know that $r_{\Gamma}(x)$ is a function $\phi(x) = G(x,\nu(x))$ define on $\bar{\Omega}\backslash E$, and $G(x,m)$ is continuous on $\bar{\Omega} \times \bar{\Omega}^{\ast}$.

  To prove $r_{\Gamma}(x)$ is continuous on $\bar{\Omega}\backslash E$, we only need to prove $r_{\Gamma}(x)$ is both upper and lower semi-continuous on $\bar{\Omega}\backslash E$. We first prove  $r_{\Gamma}(x)$ is upper semi-continuous on $\bar{\Omega}\backslash E$, that is, for any $\alpha \in \mathbb{R}$, the set $M_{\alpha} = \{x \in \bar{\Omega}\backslash E;~\phi(x)\leq \alpha\}$ is a closed set in $\bar{\Omega}\backslash E$. Then we need to prove that for a sequence $x_{k} \in M_{\alpha}$ and $x_{0} \in \bar{\Omega}\backslash E$, if $x_{k} \rightarrow x_{0}$, then $x_{0} \in M_{\alpha}$.

  We claim that for $x_{k},x_{0} \in \bar{\Omega}\backslash E$, if $x_{k} \rightarrow x_{0}$, then there exists a subsequence $x_{k_{j}}$, such that $\nu(x_{k_{j}})\rightarrow \nu(x_{0})$ as $j\rightarrow \infty$.

  Indeed, suppose that $H(m_{k},b_{k})$ support $\Gamma$ at $\rho(x_{k})x_{k}$, then we have
  $$\rho(x) \geq \frac{b}{1 - \kappa m_{k} \cdot x} \quad \text{and} \quad \rho(x_{k}) = \frac{b_{k}}{1 - \kappa m_{k} \cdot x_{k}}.$$
  Hence from (\ref{3.1}), we have
  $$-C_{1}\kappa \varepsilon \leq b_{k} \leq C_{2}(1 - \kappa).$$
  Then there exist subsequence $b_{k_{j}}\rightarrow b_{0}$ and $m_{k_{j}}\rightarrow m_{0}$ as $j \rightarrow \infty$. From Lemma \ref{lem3.5}, the claim holds true.

  Consequently, if $x_{k} \in M_{\alpha}$, then $\phi(x_{k}) = G(x_{k},\nu(x_{k})) \leq \alpha$. However, from claim, there exists a subsequence $x_{k_{j}}$, such that $\nu(x_{k_{j}})\rightarrow \nu(x_{0})$ as $j\rightarrow \infty$. Then for $G$ is continuous, we know that $r_{\Gamma}(x)$ is upper semi-continuous on $\bar{\Omega}\backslash E$.

  Using the similar argument, we can prove that $r_{\Gamma}(x)$ is lower semi-continuous on $\bar{\Omega}\backslash E$. Then $r_{\Gamma}(x)$ is continuous on $\bar{\Omega}\backslash E$.
\end{proof}

\begin{remark}
  From Lemma \ref{lem3.1}, the singular points set of $\rho$ is a null set, then $r_{\Gamma}(x)$ is well-defined on $\Omega$ a.e., hence $r_{\Gamma}(x)$ is measurable in $\Omega$.
  \label{rem3.2}
\end{remark}

From above analysis, we can get the following lemma and theorem, which are useful in proving the existence of the weak solution.

\begin{lemma}
  Let $\Gamma_{k}$ and $\Gamma$ be refractors with defining functions $\rho_{k}(x)$ and $\rho(x)$, the corresponding fresnel coefficients are $t_{k}$ and $t$. Suppose that $\rho_{k}\rightarrow \rho$ pointwise in $\bar{\Omega}$ and there exist constants $C_{1},C_{2}>0$, such that $C_{1} \leq \rho_{k}(x) \leq C_{2}$ in $\bar{\Omega}$. Then for $y \notin E$, there exists a subsequence $t_{k_{j}}(y)\rightarrow t(y)$ as $j\rightarrow \infty$, where $E$ is the union of singular points of refractors $\Gamma_{k}$ and $\Gamma$.
  \label{lem3.7}
\end{lemma}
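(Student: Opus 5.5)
Since $t_k(y)=1-\psi(y\cdot m_k(y))$ and $t(y)=1-\psi(y\cdot m(y))$, where $m_k(y)\in\mathcal{N}_{\Gamma_k}(y)$ and $m(y)\in\mathcal{N}_{\Gamma}(y)$ are the (unique, because $y\notin E$) refracted directions, and $\psi$ in (\ref{3.5}) is continuous on $\left[\frac{1}{\kappa}+\varepsilon,1\right]$, it suffices to find a subsequence with $m_{k_j}(y)\rightarrow m(y)$. Equivalently, by Lemma \ref{lem3.5}, it suffices to show the normals $\nu_{k_j}(y)\rightarrow\nu(y)$. Note that the denominators in $\psi$ do not vanish on this interval; this is implicit in Proposition \ref{prop3.1}.

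Fix $y\notin E$ and let $H(m_k,b_k)$ be the supporting hyperboloid of $\Gamma_k$ at $\rho_k(y)y$, so that
\[
\rho_k(x)\geq \frac{b_k}{1-\kappa m_k\cdot x}\ \text{ for all } x\in\bar\Omega,
\qquad
\rho_k(y)=\frac{b_k}{1-\kappa m_k\cdot y}.
\]
Using $C_1\leq\rho_k\leq C_2$ and (\ref{3.1}), exactly as in the proof of Lemma \ref{lem3.6}(a), we get
\[
-C_1\kappa\varepsilon\leq b_k\leq C_2(1-\kappa).
\]
By compactness of $\bar\Omega^{\ast}$ we then extract a subsequence with $m_{k_j}\rightarrow m_0\in\bar\Omega^{\ast}$ and $b_{k_j}\rightarrow b_0>0$.

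Passing to the limit in the two relations above, using only the pointwise convergence $\rho_k\rightarrow\rho$ (evaluated at each fixed $x$ and at $y$), gives $\rho(x)\geq b_0/(1-\kappa m_0\cdot x)$ for all $x$ and $\rho(y)=b_0/(1-\kappa m_0\cdot y)$. Hence $H(m_0,b_0)$ supports $\Gamma$ at $\rho(y)y$, i.e. $m_0\in\mathcal{N}_\Gamma(y)$. Since $y$ is not a singular point of $\Gamma$, Remark \ref{rem3.1} shows the supporting hyperboloid is unique, so $m_0=m(y)$. The points $z_{k_j}=\rho_{k_j}(y)y$ converge to $\rho(y)y$, so Lemma \ref{lem3.5} gives $\nu_{k_j}(y)\rightarrow\nu(y)$, consistent with the above. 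Continuity of $\psi$ then yields $t_{k_j}(y)\rightarrow t(y)$.

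The main obstacle is the identification $m_0=m(y)$. It requires reading Remark \ref{rem3.1} as saying that a non-singular point has exactly one supporting hyperboloid, hence one refracted direction. Also, the fact that $y\notin E$ for the $\Gamma_k$ is only used to make $t_k(y)$ single-valued. If uniqueness had to be justified directly, I would argue via differentiability: at a point where $\rho$ is differentiable, any supporting hyperboloid is tangent there, so the normal $\nu$ is determined by $\nabla\rho(y)$. The relation $y-\kappa m=\lambda\nu$ from (\ref{2.3}) then determines $m$ uniquely, because $\kappa\neq0$ and $\lambda$ is fixed by (\ref{2.4}).
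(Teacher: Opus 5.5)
Your proposal is correct and follows essentially the same route as the paper. You bound $b_k$ via (\ref{3.1}), extract a convergent subsequence $(m_{k_j},b_{k_j})$, and pass to the limit to get a supporting hyperboloid of $\Gamma$ at $\rho(y)y$; convergence of the refracted directions (equivalently the normals), and hence of $t_{k_j}(y)$, follows. Your version is slightly more careful than the paper's: you state explicitly that identifying the limit hyperboloid with the one determining $t(y)$ uses uniqueness at non-singular points (Remark~\ref{rem3.1}), and that $\psi$ is continuous on $\left[\frac{1}{\kappa}+\varepsilon,1\right]$, both of which the paper leaves implicit.
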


\begin{proof}
  Given $y \notin E$ and $k$, there exist $b_{k}>0$ and $m_{k} \in \bar{\Omega}^{\ast}$, such that
  $$\rho_{k}(y) = \frac{b_{k}}{1 - \kappa m_{k} \cdot y} \quad \text{and} \quad \rho_{k}(z) \geq \frac{b_{k}}{1 - \kappa m_{k} \cdot z}~\text{for all}~ z\in \bar{\Omega}. $$
  So we have $C_{1} \leq \dfrac{b_{k}}{1 - \kappa m_{k} \cdot y} \leq C_{2}$, then from (\ref{3.1}), we get
  $$-C_{1}\kappa \varepsilon \leq b_{k} \leq C_{2}(1 - \kappa).$$
  So $b_{k}$s are bounded and away from 0 and $\infty$, then there exist subsequence $b_{k_{j}}\rightarrow b >0$ and $m_{k_{j}}\rightarrow m \in \bar{\Omega}^{\ast}$, hence $H(m,b)$ supports $\Gamma$ at $y\rho(y)$, so $y \in \mathcal{T}_{\Gamma}(m)$. For $y \notin E$, the normal $\nu_{k_{j}}(y)$ to the semi-hyperboloid  $H(m_{k_{j}},b_{k_{j}})$ equals to the normal to the refractor $\Gamma_{k_{j}}$ at $y$, and the normal $\nu(y)$ to the semi-hyperboloid $H(m,b)$ equals to the normal to the refractor $\Gamma$ at $y$. Since $H(m_{k_{j}},b_{k_{j}}) \rightarrow H(m,b)$ as $j\rightarrow \infty$, then $\nu_{k_{j}}(y) \rightarrow \nu(y)$ for $y \notin E$ as  $j\rightarrow \infty$. So we have $t_{k_{j}}(y)\rightarrow t(y)$ as $j \rightarrow \infty$.
\end{proof}

\begin{theorem}
  Assume that the hypotheses and notations of Lemma \ref{lem3.7} hold, and let $F \subseteq \bar{\Omega}^{\ast}$ be a compact set, set $F_{k} = \mathcal{T}_{\Gamma_{k}}(F)$. Then for all $y \notin E$, we have
  \begin{equation}\label{3.6}
    (a) \varlimsup_{k \rightarrow \infty} \chi_{F_{k}}(y)t_{k}(y) = t(y)\varlimsup_{k \rightarrow \infty} \chi_{F_{k}}(y);
  \end{equation}
  \begin{equation}\label{3.7}
    (b) \varliminf_{k \rightarrow \infty} \chi_{F_{k}}(y)t_{k}(y) = t(y)\varliminf_{k \rightarrow \infty} \chi_{F_{k}}(y).
  \end{equation}
  \label{thm3.1}
\end{theorem}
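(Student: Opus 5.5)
We fix $y\notin E$ and treat (a) and (b) by the same argument, the only difference being whether we pick a subsequence realizing the $\varlimsup$ or the $\varliminf$. The key point is that, off the singular set, the whole sequence $t_k(y)$ converges to $t(y)$, not merely a subsequence as in Lemma \ref{lem3.7}. Granting this, $\chi_{F_k}(y)\in\{0,1\}$ is a bounded sequence and $t_k(y)\to t(y)$, so
$$\chi_{F_k}(y)t_k(y)=\chi_{F_k}(y)t(y)+\chi_{F_k}(y)\bigl(t_k(y)-t(y)\bigr),$$
and the last term tends to $0$. Taking $\varlimsup$ or $\varliminf$ then gives (\ref{3.6}) and (\ref{3.7}) at once, using $t(y)\ge 0$ (indeed $t(y)>C_\varepsilon$ by Proposition \ref{prop3.1}) to pull the constant out of the limit.

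The plan is therefore first to upgrade Lemma \ref{lem3.7} to full convergence by a subsequence-of-subsequence argument. Let $(t_{k_i}(y))$ be any subsequence. The hypotheses of Lemma \ref{lem3.7} (pointwise convergence $\rho_{k_i}\to\rho$ and the bounds $C_1\le\rho_{k_i}\le C_2$) still hold along $k_i$. Rerunning that proof, the parameters $b_{k_i}$ satisfy $-C_1\kappa\varepsilon\le b_{k_i}\le C_2(1-\kappa)$ by (\ref{3.1}), and the $m_{k_i}$ lie in the compact set $\bar\Omega^\ast$. We can therefore extract a further subsequence along which $H(m_{k_{i_j}},b_{k_{i_j}})\to H(m,b)$, where $H(m,b)$ supports $\Gamma$ at $\rho(y)y$.

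Because $y\notin E$, $\Gamma$ has a unique supporting hyperboloid at $\rho(y)y$ (Remark \ref{rem3.1}). Hence the limit normal $\nu(y)=y-\kappa m$ (Lemma \ref{lem3.5}) and the limiting coefficient $t(y)=1-\psi(y\cdot m)$ do not depend on the chosen subsequence. By continuity of $\psi$ on $[\frac1\kappa+\varepsilon,1]$, $t_{k_{i_j}}(y)=1-\psi(y\cdot m_{k_{i_j}})\to t(y)$. Every subsequence thus has a further subsequence converging to the same limit $t(y)$, so $t_k(y)\to t(y)$.

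The main obstacle is exactly this uniqueness step: one must ensure that the limit direction $m$ is forced by $\Gamma$ at $y$, not by the subsequence. This requires that $y$ being a regular point of $\Gamma$ excludes two distinct supporting semi-hyperboloids, i.e. the contrapositive of Remark \ref{rem3.1}, together with the fact that a supporting hyperboloid at a differentiability point is determined by the normal via Snell's law (\ref{2.3}).

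I would argue this directly. At a regular point the tangent plane exists, so any supporting $H(m,b)$ must share the normal $\nu$ there. By (\ref{2.3}) we have $y-\kappa m=\lambda\nu$ with $\lambda=\Phi(y\cdot\nu)$, which determines $m$ uniquely; $b$ is then fixed by $\rho(y)=b/(1-\kappa m\cdot y)$. No control on $\chi_{F_k}$ is needed beyond its boundedness, so compactness of $F$ plays no role in this particular statement.
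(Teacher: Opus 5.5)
Your proof is correct and is essentially the argument the paper defers to (Theorem 5.5 of Guti\'errez--Mawi): you first get full convergence $t_k(y)\to t(y)$ for $y\notin E$, and then use elementary limit algebra, with $\chi_{F_k}(y)$ bounded and $t(y)\ge 0$. Your subsequence-of-subsequence upgrade of Lemma~\ref{lem3.7} is exactly the step that is needed. It rests on uniqueness of the supporting hyperboloid at non-singular points (Remark~\ref{rem3.1}), and it matters because Lemma~\ref{lem3.7} as stated here only gives a $y$-dependent subsequence, which would not by itself yield (\ref{3.6})--(\ref{3.7}).
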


\begin{proof}
  See Theorem 5.5 in \cite{GM13}.
\end{proof}

\subsection{Weak solution of the far field refraction problem for the case $\kappa <-1$ with loss of energy}
In this subsection, we define the weak solution of the far field refraction problem for the case $\kappa <-1$ with loss of energy. We first give the definition of refractor measure originated from \cite{GM13}.

\begin{definition}
  Suppose $\Gamma$ is a refractor from $\bar{\Omega}$ to $\bar{\Omega}^{\ast}$, $f \in L^{1}(\bar{\Omega})$ and $\inf\limits_{\bar{\Omega}}f >0$. The refractor measure associated with $\Gamma$ and $f$ is defined by a set function on Borel subsets of $\bar{\Omega}^{\ast}$:
  \begin{equation}\label{3.8}
    G_{\Gamma}(F) := \int_{\mathcal{T}_{\Gamma}(F)}f(x)t_{\Gamma}(x) \, dx,
  \end{equation}
  where $dx$ is the surface measure on $S^{n-1}$.
  \label{def3.3}
\end{definition}

\begin{remark}
  $G_{\Gamma}(F)$ is a finite Borel measure defined on $\mathcal{M}$, where $\mathcal{M}$ is defined in Lemma \ref{lem3.4} $(b)$.
  \label{rem3.3}
\end{remark}

Now we can define the weak solution of the far field refraction problem for the case $\kappa <-1$ with loss of energy.
\begin{definition}
  Suppose that $\mu$ is a Radon measure on the Borel subset of $\bar{\Omega}^{\ast}$ and $f \in L^{1}(\bar{\Omega})$, a refractor $\Gamma$ is a weak solution of the far field refraction problem for the case $\kappa <-1$ with emitting illumination intensity $f(x)$ and prescribe refracted illumination intensity $\mu$ if for any Borel set $\omega \subseteq \bar{\Omega}^{\ast}$, there holds:
  \begin{equation}\label{3.9}
    G_{\Gamma}(\omega) = \int_{\mathcal{T}_{\Gamma}(\omega)}f(x)t_{\Gamma}(x)dx \geq \mu(\omega).
  \end{equation}
  \label{def3.4}
\end{definition}

\begin{remark}
  Since a small portion of energy is used for internal reflection, a little extra energy is required to ensure that light can be refracted into $\bar{\Omega}^{\ast}$, so we use ``$\geq$'' in (\ref{3.9}). 
\label{rem3.4}
\end{remark}

From Definition \ref{def3.4}, we can prove that the weak solution is unique up to a multiplicative constant.

\begin{theorem}
  If $\Gamma = \{\rho(x)x;~x\in \bar{\Omega}\}$ is a weak solution of the refraction problem, then for any $c>0$, $c\Gamma = \{c\rho(x)x;~x\in \bar{\Omega}\}$ is also a weak solution of the refraction problem.
  \label{thm3.2}
\end{theorem}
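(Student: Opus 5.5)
The plan is to show that scaling by $c>0$ leaves unchanged every ingredient of the refractor measure $G_{\Gamma}$: the supporting hyperboloids up to the value of $b$, the refractor and trace mappings, and the Fresnel coefficient $t_{\Gamma}$. Inequality (\ref{3.9}) for $c\Gamma$ then follows directly from the one for $\Gamma$.

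\emph{Step 1: $c\Gamma$ is a refractor.} First, $c\rho\in C(\bar{\Omega})$. Fix $x_{0}\in\bar{\Omega}$ and let $H(m,b)$ support $\Gamma$ at $\rho(x_{0})x_{0}$. Multiplying $\rho(x_{0})=\frac{b}{1-\kappa m\cdot x_{0}}$ and $\rho(x)\geq \frac{b}{1-\kappa m\cdot x}$ by $c$, we see that $H(m,cb)$ supports $c\Gamma$ at $c\rho(x_{0})x_{0}$. The constraint $x\cdot m\geq 1/\kappa$ in the definition of $H$ does not depend on $b$, so this is a legitimate semi-hyperboloid. The argument is symmetric: dividing by $c$ turns a supporting $H(m,b')$ of $c\Gamma$ into a supporting $H(m,b'/c)$ of $\Gamma$.

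\emph{Step 2: the mappings agree.} By Step 1, $\mathcal{N}_{c\Gamma}(x)=\mathcal{N}_{\Gamma}(x)$ for every $x\in\bar{\Omega}$, since the direction $m$ is unchanged and only $b$ is rescaled. Consequently $\mathcal{T}_{c\Gamma}(\omega)=\mathcal{T}_{\Gamma}(\omega)$ for every Borel $\omega\subseteq\bar{\Omega}^{\ast}$. It also follows that the singular sets coincide, because by Remark \ref{rem3.1} singularity is detected by nonuniqueness of the supporting direction.

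\emph{Step 3: the Fresnel coefficients agree.} For $x$ outside the common null singular set, the normal to $c\Gamma$ at $c\rho(x)x$ is parallel to the normal to $\Gamma$ at $\rho(x)x$. Indeed, by Lemma \ref{lem3.5} both normals are given by $\frac{z}{|z|}-\kappa m$ with the same $m$ and the same direction $z/|z|=x$. Equivalently, dilation about the origin preserves normal directions. By Remark \ref{rem2.3}, $t$ depends only on $x$ and $\nu$, and in (\ref{2.16}) only on $x\cdot m$. Hence $t_{c\Gamma}(x)=t_{\Gamma}(x)$ almost everywhere. Therefore
\[
G_{c\Gamma}(\omega)=\int_{\mathcal{T}_{c\Gamma}(\omega)}f\,t_{c\Gamma}\,dx=\int_{\mathcal{T}_{\Gamma}(\omega)}f\,t_{\Gamma}\,dx=G_{\Gamma}(\omega)\geq\mu(\omega),
\]
which is exactly (\ref{3.9}) for $c\Gamma$.

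The only point needing some care is Step 3, namely that the Fresnel coefficient is scale invariant. I expect this to be immediate once $t_{\Gamma}$ is written as $1-\psi(x\cdot m)$ with $m\in\mathcal{N}_{\Gamma}(x)$, using Step 2 together with the fact that the singular sets have measure zero (Lemma \ref{lem3.1}).
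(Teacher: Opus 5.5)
Your proposal is correct and follows the paper's proof. You rescale a supporting $H(m,b)$ to $H(m,cb)$, deduce $\mathcal{T}_{c\Gamma}(\omega)=\mathcal{T}_{\Gamma}(\omega)$ and $t_{c\Gamma}=t_{\Gamma}$ a.e., and conclude $G_{c\Gamma}(\omega)=G_{\Gamma}(\omega)\geq\mu(\omega)$; you only add detail the paper omits (the reverse correspondence $H(m,b')\mapsto H(m,b'/c)$ and the normal $x-\kappa m$), and the appeal to Remark~\ref{rem3.1} for equal singular sets is unnecessary, since it is enough that each singular set is null.
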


\begin{proof}
  If $H(m,b)$ supports $\Gamma$ at $\rho(x)x$, then $H(m,cb)$ supports $c\Gamma$ at $c\rho(x)x$. Then for any $\omega \in \bar{\Omega}^{\ast}$, we have $\mathcal{T}_{\Gamma}(\omega) = \mathcal{T}_{c\Gamma}(\omega)$ and $t_{\Gamma}(x) = t_{c\Gamma}(x)$, hence $c\Gamma$ is also a weak solution.
\end{proof}

The existence of weak solution is discussed in the following two subsections.

\subsection{Existence of the weak solution when $\mu$ is discrete measure}

In this subsection, we assume that $\mu$ equals finite sum of $\delta$-measures, hence all rays are refracted into finite directions. Based on this assumption, we establish the existence of the weak solution of the far field refraction problem for the case $\kappa <-1$ with loss of energy when $\mu$ is discrete measure.

\begin{remark}
  Suppose that $m_{1},m_{2},\ldots, m_{l},~l \geq 2$ are discrete points in $\bar{\Omega}^{\ast}$, then for $\mathbf{b} = (b_{1},b_{2},\cdots ,b_{l}) \in \mathbb{R}^{l},~b_{i}>0$, the refractor is defined as
  \begin{equation}\label{3.10}
    \Gamma(\mathbf{b}) = \{\rho(x)x;~x\in \bar{\Omega},~\rho(x)=\max_{1\leq i \leq l}\frac{b_{i}}{1 - \kappa m_{i}\cdot x}\}.
  \end{equation}
  \label{rem3.5}
\end{remark}

Now we show the existence of the weak solution when $\mu$ equals the linear combination of the $\delta$-measures at $m_{1},m_{2},\ldots, m_{l}$. 

\begin{theorem}
  Suppose that $f \in L^{1}(\bar{\Omega})$ and $\inf\limits_{x \in \bar{\Omega}}f(x)>0$, $m_{1},m_{2},\ldots, m_{l},~l \geq 2$ are discrete points in $\bar{\Omega}^{\ast}$, $g_{1}, g_{2}, \ldots, g_{l}>0$. Let $\mu$ be the Borel measure defined on $\bar{\Omega}^{\ast}$ by $\mu = \sum\limits_{i = 1}^{l}g_{i}\delta_{m_{i}}(\omega)$, where $\omega \in \bar{\Omega}^{\ast}$ is Borel set. Also suppose that $$\displaystyle\int_{\bar{\Omega}}f(x)dx \geq \dfrac{1}{1 - C_{\varepsilon}} \mu(\bar{\Omega}^{\ast}),$$ where $C_{\varepsilon}$ is defined in Proposition \ref{prop3.1}. Then there exist $\mathbf{b}_{0} \in \mathbb{R}^{l}$ and refractor $\Gamma(\mathbf{b}_{0})$, such that
  $$\int_{\mathcal{T}_{\Gamma{(\mathbf{b_{0}})}(m_{i})}}f(x)t_{\Gamma(\mathbf{b}_{0})}(x)dx = g_{i}$$
  for $i = 2,\ldots,l$, and
  $$\int_{\mathcal{T}_{\Gamma{(\mathbf{b_{0}})}(m_{1})}}f(x)t_{\Gamma(\mathbf{b}_{0})}(x)dx > g_{1},$$
  namely the weak solution of the far field refraction problem for the case $\kappa <-1$ with loss of energy exists.
  \label{thm3.3}
\end{theorem}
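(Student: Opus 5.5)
We follow the Minkowski-type argument of Gutiérrez--Mawi \cite{GM13} and Stachura \cite{St17}, adapted to the semi-hyperboloids $H(m_i,b_i)$ and the refractors $\Gamma(\mathbf{b})$ of Remark \ref{rem3.5}. Normalize $b_1=1$ (justified by Theorem \ref{thm3.2}) and consider
\[
W=\Big\{\mathbf{b}=(1,b_2,\ldots,b_l);\ b_i>0,\ \int_{\mathcal{T}_{\Gamma(\mathbf{b})}(m_i)}f\,t_{\Gamma(\mathbf{b})}\,dx\le g_i,\ i=2,\ldots,l\Big\}.
\]
Write $G_i(\mathbf{b})=\int_{\mathcal{T}_{\Gamma(\mathbf{b})}(m_i)}f\,t_{\Gamma(\mathbf{b})}\,dx$. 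The plan is to show three things: $W\neq\emptyset$, $W$ is closed, and the components $b_i$ are bounded on $W$. We then take $\mathbf{b}_0\in W$ minimizing $b_2+\cdots+b_l$ and show that every inequality is an equality for $i\ge2$ and that $G_1(\mathbf{b}_0)>g_1$.

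\emph{Nonemptiness.} Choose $b_2,\ldots,b_l$ so small that $\frac{b_i}{1-\kappa m_i\cdot x}<\frac{1}{1-\kappa m_1\cdot x}$ on $\bar\Omega$. Using \eqref{3.1}, the condition $b_i<\frac{-\kappa\varepsilon}{1-\kappa}$ suffices. Then $\mathcal{T}(m_i)$ is empty for $i\ge2$, so $\mathbf{b}\in W$.

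\emph{Boundedness.} If $b_i$ were large, then $\Gamma(\mathbf{b})$ would be the $i$-th hyperboloid on all of $\bar\Omega$ (more precisely, $b_i\ge\frac{1-\kappa}{-\kappa\varepsilon}$ forces this). Then $\mathcal{T}(m_i)=\bar\Omega$ and, by Proposition \ref{prop3.1},
\[
G_i\ge(1-C_\varepsilon)\int f>\ldots\ge g_i,
\]
which is a contradiction. Here one needs the hypothesis $\int f\ge\frac{1}{1-C_\varepsilon}\mu(\bar\Omega^*)$, with $\mu(\bar\Omega^*)>g_i$ because $l\ge2$.

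\emph{Closedness.} Take $\mathbf{b}^k\to\mathbf{b}$ in $W$. Then $\rho_k\to\rho$ uniformly, with uniform positive bounds. Apply Lemma \ref{lem3.6}(b) to the compact set $\{m_i\}$ to get $\varlimsup \mathcal{T}_{\Gamma_k}(m_i)\subseteq\mathcal{T}_\Gamma(m_i)$. Then Theorem \ref{thm3.1}(a) with Fatou's lemma (reverse Fatou, since $f t_k\le f\in L^1$) gives
\[
\int_{\mathcal{T}_\Gamma(m_i)} f t \ \ge\ \varlimsup\int_{\mathcal{T}_{\Gamma_k}(m_i)} f t_k .
\]
This is the wrong direction. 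So we instead use Lemma \ref{lem3.6}(c) with open neighbourhoods, or note that $\mathcal{T}_\Gamma(m_i)$ differs from $\bigcap_{j\ne i}\{\rho_i>\rho_j\}$ only by a null set (the singular set, Lemma \ref{lem3.1}). On such open sets, $\chi_{F_k}\to\chi_F$ a.e. pointwise, and Theorem \ref{thm3.1} together with dominated convergence gives $G_i(\mathbf{b}^k)\to G_i(\mathbf{b})$. Hence $G_i$ is continuous on $\{b_i>0\}$ and $W$ is closed. Positivity of the minimizer's coordinates is the same kind of issue: if some $b_i$ is tiny, the $i$-th piece vanishes harmlessly, so we can work with $b_i\ge0$, or first restrict to $b_i\ge\delta$ and check that the minimum is interior.

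\emph{Equality.} Suppose $G_i(\mathbf{b}_0)<g_i$ for some $i\ge2$. Decrease $b_i$ slightly to $b_i-\eta$. Then $\mathcal{T}(m_j)$ for $j\ne i$ can only grow, and $\mathcal{T}(m_i)$ shrinks. The key monotonicity observation is that $t_{\Gamma}(x)$ at a nonsingular point depends only on which $m_j$ is used there, via $\psi(x\cdot m_j)$. Consequently, $G_j$ for $j\neq i,1$ cannot decrease in a bad way. More precisely, the points that switch from $i$ to $j$ would only increase $G_j$, so we must argue differently: decreasing $b_i$ keeps $G_i<g_i$ by continuity, and it does not increase any $G_j$ with $j\ne i$. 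This holds because the sets $\{\rho=\rho_j\}$ for $j\ne i$ can only grow (wrong direction), so the classical trick is reversed, as in \cite{GM13}: one instead shows that increasing the other components is forbidden by minimality. I expect this monotonicity bookkeeping to be the main obstacle. I would follow the \cite{GM13} version exactly, where one shows $G_j(\mathbf{b})$ is nonincreasing in $b_i$ for $j\ne i$ (lowering $b_i$ enlarges the other traces). Then the lowered vector still lies in $W$ as long as the enlarged $G_j$ stay $\le g_j$, and this uses that $G_j$ at the minimizer are controlled. Finally, $G_1>g_1$ follows from
\[
\sum_i G_i=\int_{\bar\Omega} f t_\Gamma\ge(1-C_\varepsilon)\int f\ge\mu(\bar\Omega^*)=\sum_i g_i ,
\]
where the middle inequality is strict because $t_\Gamma>C_\varepsilon$ and the $\mathcal{T}(m_i)$ overlap only on null sets.
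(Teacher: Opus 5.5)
\textbf{The extremal step runs in the wrong direction for $\kappa<-1$; this is a genuine gap.} Here $\Gamma(\mathbf{b})$ is the \emph{maximum} of the hyperboloids $\rho_j=b_j/(1-\kappa m_j\cdot x)$. Raising $b_i$ enlarges $\mathcal{T}(m_i)$ and, up to the null singular set, shrinks every $\mathcal{T}(m_j)$ with $j\neq i$; lowering $b_i$ does the opposite.

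Minimizing $b_2+\cdots+b_l$ over $W$ therefore cannot work. Your own nonemptiness argument shows that every $\mathbf{b}$ with all $b_i<\frac{-\kappa\varepsilon}{1-\kappa}$ ($i\ge2$) lies in $W$ with $G_i(\mathbf{b})=0$. Hence $\inf_W\sum_{i\ge2}b_i=0$ is not attained. Restricting to $b_i\ge\delta$, or allowing $b_i\ge0$, only puts the minimum at a degenerate boundary point where $G_i=0<g_i$.

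The perturbation you then try (lowering $b_i$) enlarges the other traces, so any constraint $G_j\le g_j$ that is already tight can break. Your phrase ``this uses that $G_j$ at the minimizer are controlled'' is exactly the missing step, and it is false in general. Minimizing and decreasing is the correct scheme for the ellipsoid case $-1<\kappa<0$, where $\rho$ is a minimum, not here.

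The paper instead \emph{maximizes} $d(\mathbf{b})=\sum_i b_i$ over $W$. If $G_2<g_2$ at the maximizer $\mathbf{b}^\ast$, it replaces $b_2^\ast$ by $\xi b_2^\ast$ with $\xi>1$ close to $1$. Then $G_2$ stays below $g_2$ by continuity (Lemma~\ref{lem3.10}). For $j\ge3$ one has $\mathcal{T}_{\Gamma(\mathbf{b}_\xi)}(m_j)\setminus E\subseteq\mathcal{T}_{\Gamma(\mathbf{b}^\ast)}(m_j)$, with $t$ unchanged there, so $G_j$ does not increase. Thus $\mathbf{b}_\xi\in W$ and $d(\mathbf{b}_\xi)>d(\mathbf{b}^\ast)$, a contradiction.

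\textbf{Maximization needs an upper bound on $W$, and your boundedness argument does not supply it as written.} The condition $b_i\ge\frac{1-\kappa}{-\kappa\varepsilon}$ only gives $\rho_i\ge\rho_1$, not $\rho_i\ge\rho_j$ for other large $b_j$, so $\mathcal{T}(m_i)=\bar\Omega$ does not follow. The correct route (Lemma~\ref{lem3.8}(b)) uses the mass hypothesis to get $G_1(\mathbf{b})\ge g_1>0$ for every $\mathbf{b}\in W$. This produces a nonsingular $x_0\in\mathcal{T}(m_1)$ where $\rho_1$ is strictly on top, which yields $b_i<\frac{1-\kappa m_i\cdot x_0}{1-\kappa m_1\cdot x_0}\le\frac{1-\kappa}{-\kappa\varepsilon}$.

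\textbf{The strictness $G_1>g_1$ does not follow from $t_\Gamma>C_\varepsilon$.} You need $t_\Gamma>1-C_\varepsilon$ on a set of positive measure. The paper gets this from $|\mathcal{T}(m_1)|>0$ together with the fact that $\psi$ attains each value at only finitely many $t$. So $\psi(x\cdot m_1)=C_\varepsilon$ can hold only on finitely many slices $\{x\cdot m_1=c\}$, which are null.

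Your continuity argument (a.e.\ convergence of indicators off the null sets $\{\rho_i=\rho_j\}$, plus dominated convergence) is fine, and is a cleaner version of Lemma~\ref{lem3.10}.
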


\begin{remark}
    If $l = 1$, this problem might be overdetermined. In this case, we have $\Gamma(\mathbf{b})$ equals $H(m_{1},b_{1})$, hence this predetermines the value of $t_{\Gamma}(x)$ and $\displaystyle\int_{\bar{\Omega}}f(x)t_{\Gamma}(x)dx$, so we must have
    $$\int_{\mathcal{T}_{\Gamma{(\mathbf{b_{0}})}(m_{1})}}f(x)t_{\Gamma(\mathbf{b}_{0})}(x)dx > g_{1}.$$
    \label{rem3.6}
\end{remark}

In order to prove Theorem \ref{thm3.3}, we need some lemmas.

\begin{lemma}
  Suppose that $f \in L^{1}(\bar{\Omega})$ and $\inf\limits_{x \in \bar{\Omega}}f(x)>0$, $m_{1},m_{2},\ldots, m_{l},~l \geq 2$ are discrete points in $\bar{\Omega}^{\ast}$, $g_{1}, g_{2}, \ldots, g_{l}>0$. Suppose that $W \subseteq \mathbb{R}^{l}$ is a set defined by $W := \{\mathbf{b} = (1,b_{2},\ldots, b_{l});~b_{i}>0 \ {for} \ i=2, \ldots, l\}$, and for any $\mathbf{b} \in W$, $\Gamma(\mathbf{b})$ satisfies $G_{\Gamma(\mathbf{b})}(m_{i}) = \displaystyle\int_{\mathcal{T}_{\Gamma(\mathbf{b})}(m_{i})}f(x)t_{\Gamma(\mathbf{b})}(x)dx \leq g_{i}$ for $i = 2,\ldots,l$. Then we have:
  
   (a) $W \neq \emptyset$;
  
   (b) If $\mathbf{b} = (1,b_{2}, \ldots, b_{l}) \in W$, then $b_{i}<\dfrac{1 - \kappa}{-\varepsilon \kappa}$ for $i=2, \ldots, l$.
  \label{lem3.8}
\end{lemma}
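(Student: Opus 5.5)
For (a), I will exhibit an explicit element of $W$ in which $b_1=1$ is large compared with the other $b_i$, so that the hyperboloid $H(m_1,1)$ dominates everywhere. The goal is to choose $b_2,\ldots,b_l>0$ small enough that
\[
\frac{b_i}{1-\kappa m_i\cdot x}<\frac{1}{1-\kappa m_1\cdot x}\qquad\text{for all } x\in\bar{\Omega},\ i\ge 2 .
\]
By (\ref{3.1}) we have $-\varepsilon\kappa\le 1-\kappa m\cdot x\le 1-\kappa$ for all $x\in\bar\Omega$, $m\in\bar\Omega^*$. So it suffices to take
\[
b_i<\frac{-\varepsilon\kappa}{1-\kappa},\qquad i\ge 2 .
\]
With this choice $\rho(x)=\dfrac{1}{1-\kappa m_1\cdot x}$ on all of $\bar\Omega$, by the definition (\ref{3.10}). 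I then claim that $\mathcal{T}_{\Gamma(\mathbf b)}(m_i)$ is empty for $i\ge 2$. Suppose $H(m_i,b)$ supported $\Gamma(\mathbf b)$ at some $x_0$. Then
\[
\frac{b}{1-\kappa m_i\cdot x}\le \frac{1}{1-\kappa m_1\cdot x}\quad\text{on }\bar\Omega,\ \text{with equality at } x_0 .
\]
Because $\Gamma(\mathbf b)$ coincides with the single smooth hyperboloid $H(m_1,1)$, the normals at $\rho(x_0)x_0$ agree. Snell's law (\ref{2.3}) determines the refracted direction uniquely from the normal, which forces $m_i=m_1$, a contradiction since the points are distinct. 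Alternatively one can say that any supporting hyperboloid to a hyperboloid at an interior point must be the hyperboloid itself. I will phrase this via Lemma \ref{lem3.5}/Remark \ref{rem3.1}: a touching hyperboloid from below shares the tangent plane. At boundary points of $\bar\Omega$ I will fall back on the measure-zero argument, which gives
\[
\mathcal{T}_{\Gamma(\mathbf b)}(m_i)\subseteq\partial\Omega .
\]
This has zero surface measure, so $G_{\Gamma(\mathbf b)}(m_i)=0\le g_i$. Either way $\mathbf b\in W$, so $W\neq\emptyset$.

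For (b), the plan is contrapositive. Suppose $b_i\ge \frac{1-\kappa}{-\varepsilon\kappa}$ for some $i\ge2$. Then for every $x\in\bar\Omega$ and every $j$, using (\ref{3.1}) on the left side and $1-\kappa m_j\cdot x\ge -\varepsilon\kappa$ on the right,
\[
\frac{b_i}{1-\kappa m_i\cdot x}\ \ge\ \frac{b_i}{1-\kappa}\ \ge\ \frac{1}{-\varepsilon\kappa}\ \ge\ \frac{1}{1-\kappa m_1\cdot x}.
\]
So the $m_1$-hyperboloid is everywhere below the $m_i$-one. I will want strict inequality, at least almost everywhere, so that $\mathcal{T}_{\Gamma(\mathbf b)}(m_1)$ is negligible. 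The statement's strict inequality $b_i<\frac{1-\kappa}{-\varepsilon\kappa}$ suggests the author argues this way. More precisely, I will show that $\bar\Omega$ is covered up to a null set by $\bigcup_{j\ge2}\mathcal{T}_{\Gamma(\mathbf b)}(m_j)$, since every point has some supporting $m_j$ and $m_1$ cannot be active off a null set. Combining with $\mathbf b\in W$ and Proposition \ref{prop3.1} gives
\[
(1-C_\varepsilon)\int_{\bar\Omega}f\ \le\ \int_{\bar\Omega}f\,t_{\Gamma(\mathbf b)}\ \le\ \sum_{j\ge 2}G_{\Gamma(\mathbf b)}(m_j)\ \le\ \sum_{j\ge2}g_j\ <\ \mu(\bar\Omega^*).
\]
The last step uses $g_1>0$. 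This contradicts the hypothesis $\int_{\bar\Omega} f\ge \frac{1}{1-C_\varepsilon}\mu(\bar\Omega^*)$ carried over from Theorem \ref{thm3.3}, which I will assume is part of the standing setting.

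The main obstacle is the boundary case of equality in the chain above. Equality there forces $m_i\cdot x=1$, $m_1\cdot x=\frac1\kappa+\varepsilon$ and equality in (\ref{3.1}), which can hold only on a null set of $x$. I will need to check carefully that the set where the $m_1$-hyperboloid ties with the maximum, and is not strictly exceeded by some $m_j$, has measure zero. Overlaps $\mathcal{T}(m_1)\cap\mathcal{T}(m_j)$ are null sets of singular points by Lemma \ref{lem3.1}, so the covering argument still goes through. A secondary subtlety is that the overlaps among the $\mathcal{T}(m_j)$ must be handled as null sets via Lemma \ref{lem3.4}, so that the integrals add up correctly.
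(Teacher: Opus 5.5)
Your proof is correct and is essentially the paper's argument. In (a) you make the same choice $b_i<\frac{-\varepsilon\kappa}{1-\kappa}$, so that $\Gamma(\mathbf b)$ coincides with $H(m_1,1)$ over $\bar\Omega$ and $\mathcal{T}_{\Gamma(\mathbf b)}(m_i)$ is negligible. Part (b) is simply the contrapositive of the paper's direct argument and uses the same ingredients: the mass-balance hypothesis of Theorem \ref{thm3.3}, Proposition \ref{prop3.1}, $g_1>0$, and the bounds from (\ref{3.1}).

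The equality case you single out as the main obstacle is harmless. At any point where $H(m_1,1)$ ties with $H(m_i,b_i)$, both hyperboloids support $\Gamma(\mathbf b)$, so the point is singular in the sense of Remark \ref{rem3.1}, exactly as the paper uses.

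Your boundary fallback relies on $|\partial\Omega|=0$, which the paper does not state, but it is not needed. The contact set of $H(m_1,1)$ with any $H(m_i,b)$, $m_i\neq m_1$, lies in the hyperplane section $\{x\in S^{n-1};~\kappa(m_i-bm_1)\cdot x=1-b\}$, which is a null set.
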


\begin{proof}
  $(a)$ If for some $i \neq 1$, $H(m_{i},b_{i})$ supports $\Gamma$ at $\rho(x)x$, then we have
  $$\frac{1}{1 - \kappa} \leq \frac{1}{1 - \kappa m_{1} \cdot x} \leq \rho(x) = \frac{b_{i}}{1 - \kappa m_{i} \cdot x} \leq \frac{b_{i}}{-\kappa \varepsilon},$$
  so $b_{i} \geq \dfrac{-\kappa \varepsilon}{1 - \kappa}$.

  We claim that if for some $i \neq 1$, there holds $b_{i} < \dfrac{-\kappa \varepsilon}{1 - \kappa}$, then $\mathcal{T}_{\Gamma_{\mathbf{b}}(m_{i})} \subseteq E$, where $E$ is the singular point set of $\Gamma(\mathbf{b})$.

  Indeed, if $x \in \mathcal{T}_{\Gamma_{\mathbf{b}}(m_{i})}$, then there exist $b>0$, such that $H(m,b)$ supports $\Gamma$ at $\rho(x_{0})x_{0}$. Then we have
  $$\rho(x) = \max_{1 \leq i \leq l}\frac{b_{i}}{1 - \kappa m_{i} \cdot x}, \quad \rho(x) \geq \frac{b}{1 - \kappa m_{i} \cdot x} \quad \text{and} \quad \rho(x_{0}) = \frac{b}{1 - \kappa m_{i} \cdot x_{0}},$$
  hence
  $$\frac{b_{i}}{1 - \kappa m_{i} \cdot x_{0}} \leq \rho(x_{0}) = \frac{b}{1 - \kappa m_{i} \cdot x_{0}},$$
  so we have $b_{i} \leq b$. If $b_{i} = b$, then $H(m_{i},b_{i})$ supports $\Gamma$, that is a contradiction, so $b_{i} < b$. Then for any $x \in \bar{\Omega}$, we have $\rho(x) \geq \dfrac{b}{1 - \kappa m_{i} \cdot x} > \dfrac{b_{i}}{1 - \kappa m_{i} \cdot x}$, so $\rho(x) = \max\limits_{j \neq 1} \dfrac{b_{j}}{1 - \kappa m_{j} \cdot x}$. Consequently, there exist $k \neq i$, such that
  $$\rho(x_{0}) = \frac{b_{k}}{1 - \kappa m_{k} \cdot x_{0}} \quad \text{and} \quad \rho(x) \geq \frac{b_{k}}{1 - \kappa m_{k} \cdot x}~\text{for all}~x\in \bar{\Omega}.$$
  So $x_{0} \in E$, then we have $\mathcal{T}_{\Gamma_{\mathbf{b}}(m_{i})} \subseteq E$.

  So we have
  $$G_{\Gamma(\mathbf{b})}(m_{i}) = \int_{\mathcal{T}_{\Gamma(\mathbf{b})}(m_{i})}f(x)t_{\Gamma(\mathbf{b})}(x)dx \leq  \int_{E}f(x)t_{\Gamma(\mathbf{b})}(x)dx < g_{i}.$$
  Take $\mathbf{b} = (1,b_{2}, \ldots, b_{l})$, such that $b_{i} < \dfrac{-\kappa \varepsilon}{1 - \kappa}$ for $2 \leq i \leq l$, then $\mathbf{b} \in W$, hence $W \neq \emptyset$.

  $(b)$ From Remark \ref{rem3.1}, we first claim that if $\mathbf{b} \in W$, then $g_{1} \leq G_{\Gamma(\mathbf{b})}(m_{1})$.

  Indeed, for we have
  \begin{align*}
     \sum_{i=1}^{l} G_{\Gamma(\mathbf{b})}(m_{i}) & =  \sum_{i=1}^{l} \int_{\mathcal{T}_{\Gamma(\mathbf{b})}(m_{i})}f(x)t_{\Gamma(\mathbf{b})}(x)dx \\
     & =  \int_{\bigcup\limits_{i=1}^{l}\mathcal{T}_{\Gamma(\mathbf{b})}(m_{i})}f(x)t_{\Gamma(\mathbf{b})}(x)dx  =  \int_{\bar{\Omega}}f(x)t_{\Gamma(\mathbf{b})}(x)dx\\
     & \geq (1 - C_{\varepsilon})\int_{\bar{\Omega}}f(x)dx \geq \mu(\bar{\Omega}^{\ast}) \\
     & = \sum_{i = 1}^{l} g_{i}.
  \end{align*}
  So we have
  $$g_{1} - G_{\Gamma(\mathbf{b})}(m_{1}) + \sum_{i=2}^{l}[g_{i} - G_{\Gamma(\mathbf{b})}(m_{i})] \leq 0. $$
  If $\mathbf{b} \in W$, then we have $g_{1} \leq G_{\Gamma(\mathbf{b})}(m_{1}).$

  Let $\Gamma(\mathbf{b}) = \{\rho(x)x;~x \in \bar{\Omega}\}$, we claim that there exists $\rho(x_{0})x_{0}$, such that $\rho(x_{0})x_{0} \in \Gamma(\mathbf{b}) \cap H(m_{1},1)$ and $\rho(x_{0})x_{0} \notin H(m_{i},b_{i})$ for all $i \geq 2$.

  Indeed, if not, we have $\mathcal{T}_{\Gamma(\mathbf{b})}(m_{1}) \subseteq E$, then
  $$G_{\Gamma(\mathbf{b})}(m_{1}) = \int_{\mathcal{T}_{\Gamma(\mathbf{b})}(m_{1})} f(x)t_{\Gamma(\mathbf{b})}(x)dx \leq \int_{E} f(x)t_{\Gamma(\mathbf{b})}(x)dx = 0$$
  for $\vert E \vert =0$. This is a contradiction with $g_{1} >0$, hence
  $$\rho(x_{0}) = \frac{1}{1 - \kappa m_{1} \cdot x_{0}} > \frac{b_{i}}{1 - \kappa m_{i} \cdot x_{0}},$$
  and thus we have
  $$b_{i} < \frac{1 - \kappa m_{i} \cdot x_{0}}{1 - \kappa m_{1} \cdot x_{0}} < \frac{1 - \kappa}{-\kappa \varepsilon}.$$
\end{proof}

\begin{lemma}
  Let $\mathbf{b}_{k} = (b_{1}^{k},\ldots,b_{l}^{k})$ and $\mathbf{b}_{0} = (b_{1}^{0},\ldots,b_{l}^{0})$ with $\mathbf{b}_{k}\rightarrow \mathbf{b}_{0}$ in $\mathbb{R}^{l}$. Suppose that $\Gamma_{k} = \Gamma(\mathbf{b}_{k}) = \{\rho_{k}(x)x;~x\in \bar{\Omega}\}$, $\Gamma_{0} = \Gamma(\mathbf{b}_{0}) = \{\rho(x)x;~x\in \bar{\Omega}\}$, then $\rho_{k} \rightarrow \rho$ uniformly on $\bar{\Omega}$.
  \label{lem3.9}
\end{lemma}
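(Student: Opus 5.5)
The plan is to estimate $|\rho_k(x)-\rho(x)|$ directly from the explicit formula (\ref{3.10}), using the elementary inequality
\[
\Bigl|\max_{1\le i\le l}a_i-\max_{1\le i\le l}c_i\Bigr|\le \max_{1\le i\le l}|a_i-c_i|,
\]
valid for any real numbers $a_i,c_i$. We apply it pointwise in $x\in\bar{\Omega}$ with
\[
a_i=\frac{b_i^k}{1-\kappa m_i\cdot x},\qquad c_i=\frac{b_i^0}{1-\kappa m_i\cdot x}.
\]

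First we record the needed lower bound on the denominators. Since every $m_i\in\bar{\Omega}^{\ast}$ and $x\in\bar{\Omega}$, assumption (\ref{2.6}) yields (\ref{3.1}), that is, $1-\kappa m_i\cdot x\ge -\varepsilon\kappa>0$ for all $i$ and all $x\in\bar{\Omega}$. This bound is uniform in both $x$ and $i$.

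Next we combine the two facts. For every $x\in\bar{\Omega}$,
\[
|\rho_k(x)-\rho(x)|\le \max_{1\le i\le l}\frac{|b_i^k-b_i^0|}{1-\kappa m_i\cdot x}\le \frac{1}{-\varepsilon\kappa}\max_{1\le i\le l}|b_i^k-b_i^0|\le \frac{|\mathbf{b}_k-\mathbf{b}_0|}{-\varepsilon\kappa}.
\]
The right-hand side does not depend on $x$ and tends to $0$ because $\mathbf{b}_k\to\mathbf{b}_0$ in $\mathbb{R}^l$. Hence $\sup_{\bar{\Omega}}|\rho_k-\rho|\to 0$, which is the claimed uniform convergence.

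There is no genuine obstacle here. The only point requiring care is that the bound on the denominators is uniform, which is exactly what the quantitative gap $\varepsilon$ in (\ref{2.6}) provides. The elementary max inequality is proved by taking an index $j$ with $\max_i a_i=a_j$; then $\max_i a_i-\max_i c_i\le a_j-c_j$, and the reverse inequality follows by symmetry. Positivity of the $b_i^0$ is not needed for this step, although it is needed elsewhere so that $\Gamma(\mathbf{b}_0)$ is a refractor.
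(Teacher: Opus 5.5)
Your proof is correct and follows the same route as the paper: an index-by-index comparison of the two maxima in (\ref{3.10}), combined with the uniform lower bound $1-\kappa m_i\cdot x\ge-\varepsilon\kappa$ from (\ref{3.1}). Your symmetric inequality $\bigl|\max_i a_i-\max_i c_i\bigr|\le\max_i|a_i-c_i|$ gives a cleaner two-sided estimate than the paper's version, which compares only against the index attaining the maximum for $\rho$ and so controls just one side of $|\rho-\rho_k|$.
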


\begin{proof}
  For $x_{0} \in \bar{\Omega}$, we have
  \begin{align*}
    \vert \rho(x_{0}) - \rho_{k}(x_{0}) \vert & = \vert \frac{b_{i}}{1 - \kappa m_{i} \cdot x_{0}} - \rho_{k}(x) \vert \quad \text{for some}~i  \\
     & \leq  \vert \frac{b_{i}}{1 - \kappa m_{i} \cdot x_{0}} - \frac{b_{i}^{k}}{1 - \kappa m_{i} \cdot x_{0}} \vert \\
     & \leq \frac{\Vert \mathbf{b} - \mathbf{b}^{k} \Vert}{-\kappa \varepsilon},
  \end{align*}
  hence $\rho_{k}\rightarrow \rho$ uniformly on $\bar{\Omega}$.
\end{proof}

\begin{lemma}
  Let $\tau >0$, then $G_{\Gamma(\mathbf{b})}(m_{i}) = \displaystyle\int_{\mathcal{T}_{\Gamma(\mathbf{b})}(m_{i})}f(x)t_{\Gamma(\mathbf{b})}(x)dx$ is continuous on the region $R_{\tau} = \{(1,b_{2},\ldots, b_{l});~0<b_{i}\leq \tau,~i=2,\ldots,l\}$, for any $1 \leq i \leq l$.
  \label{lem3.10}
\end{lemma}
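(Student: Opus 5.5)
The plan is a sequential-continuity argument: fix $i$ and a sequence $\mathbf{b}_k\to\mathbf{b}_0$ in $R_\tau$, and show $G_{\Gamma(\mathbf{b}_k)}(m_i)\to G_{\Gamma(\mathbf{b}_0)}(m_i)$. Write $\Gamma_k=\Gamma(\mathbf{b}_k)$ with defining functions $\rho_k$ and transmission coefficients $t_k$, and write $\Gamma_0=\Gamma(\mathbf{b}_0)$ with $\rho$ and $t$.

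\textbf{Step 1 (setting up the hypotheses).} By Lemma \ref{lem3.9}, $\rho_k\to\rho$ uniformly on $\bar{\Omega}$. Since $b_1=1$ and all $b_j\le\tau$, and $-\kappa\varepsilon\le 1-\kappa m_j\cdot x\le 1-\kappa$, we get
$$\frac{1}{1-\kappa}\le\rho_k\le\frac{\max\{1,\tau\}}{-\kappa\varepsilon}.$$
These are uniform bounds $C_1\le\rho_k\le C_2$, so Lemmas \ref{lem3.6}, \ref{lem3.7} and Theorem \ref{thm3.1} apply. Let $E$ be the union of the singular sets of all $\Gamma_k$ and of $\Gamma_0$. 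By Lemma \ref{lem3.1} it is a countable union of null sets, hence $|E|=0$.

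\textbf{Step 2 (pointwise convergence of the integrands off $E$).} Put $F=\{m_i\}$, which is compact, and $F_k=\mathcal{T}_{\Gamma_k}(m_i)$. Lemma \ref{lem3.6}(b) gives $\varlimsup_k\chi_{F_k}\le\chi_{\mathcal{T}_{\Gamma_0}(m_i)}$.

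For the reverse inequality I will not use Lemma \ref{lem3.6}(c) directly, since $\{m_i\}$ is not open in $\bar{\Omega}^\ast$. Instead, take $G_i$ to be a small open neighbourhood of $m_i$ containing no other $m_j$. Every supporting direction of a $\Gamma(\mathbf{b})$ at a non-singular point is one of the $m_j$. Hence, off $E$, we have $\mathcal{T}_{\Gamma_k}(G_i)=\mathcal{T}_{\Gamma_k}(m_i)$, and likewise for $\Gamma_0$. Lemma \ref{lem3.6}(c) then yields $\chi_{\mathcal{T}_{\Gamma_0}(m_i)}\le\varliminf_k\chi_{F_k}$ off $E$.

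Combining the two bounds, $\chi_{F_k}\to\chi_{\mathcal{T}_{\Gamma_0}(m_i)}$ pointwise on $\bar{\Omega}\setminus E$. Theorem \ref{thm3.1}(a),(b) then shows that $\limsup$ and $\liminf$ of $\chi_{F_k}t_k$ both equal $t\,\chi_{\mathcal{T}_{\Gamma_0}(m_i)}$ off $E$. Therefore $f\chi_{F_k}t_k\to f\chi_{\mathcal{T}_{\Gamma_0}(m_i)}t$ a.e.

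\textbf{Step 3 (passing to the limit).} Since $0<t_k<1$ by Proposition \ref{prop3.1}, the integrands are dominated by $f\in L^1(\bar{\Omega})$. Dominated convergence then gives $G_{\Gamma_k}(m_i)\to G_{\Gamma_0}(m_i)$, which is the claim.

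The main obstacle is Step 2, specifically the identification of $\mathcal{T}_{\Gamma_k}(G_i)$ with $\mathcal{T}_{\Gamma_k}(m_i)$ up to the null set $E$. This requires arguing that at a non-singular point $x$ of $\Gamma(\mathbf{b})$, the unique supporting hyperboloid is one of the $H(m_j,b_j)$. I will do this through the normal: it determines the direction via Snell's law \eqref{2.3}, and $\rho$ coincides near $x$ with some piece $b_j/(1-\kappa m_j\cdot x)$.

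A secondary worry is the pointwise subsequence issue in Lemma \ref{lem3.7}. I expect it is harmless, because every subsequence has a further subsequence converging to the same limit $t(y)$, so the whole sequence converges.
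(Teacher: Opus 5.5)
Your proposal is correct and follows essentially the same route as the paper. Both arguments use Lemma \ref{lem3.9} for uniform convergence, uniform two-sided bounds on $\rho_k$, and an open neighbourhood $G$ of $m_i$ excluding the other $m_j$, so that $\mathcal{T}(G)$ and $\mathcal{T}(m_i)$ agree off the null set $E$, followed by Lemma \ref{lem3.6}(b),(c) and Theorem \ref{thm3.1}. The only difference is that you merge the two pointwise bounds into a.e.\ convergence and apply dominated convergence, whereas the paper sandwiches the integrals with Fatou's lemma and the reverse Fatou lemma, which rests on the same domination by $f$.
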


\begin{proof}
  Suppose that $\mathbf{b}_{k}$ is a sequence converges to $\mathbf{b}_{0}$ in $R_{\tau}$, and let $\Gamma(\mathbf{b}_{k}) = \{\rho_{k}(x)x;~x\in \bar{\Omega}\}$, $\Gamma(\mathbf{b}_{0}) = \{\rho_{0}(x)x;~x\in \bar{\Omega}\}$. Then from Lemma \ref{lem3.9}, $\rho_{k} \rightarrow \rho$ uniformly on $\bar{\Omega}$. Besides, for any $x\in \bar{\Omega}$ and $k \geq 1$, we have
  $$\rho_{k}(x) = \frac{b_{i}^{k}}{1 - \kappa m_{i} \cdot x} \leq \max\{\dfrac{\tau}{-\kappa \varepsilon}, \dfrac{1}{-\kappa \varepsilon}\}$$
  and
  $$\rho_{k}(x) = \max_{1 \leq i \leq l}\frac{b_{i}^{k}}{1 - \kappa m_{i} \cdot x} \geq \frac{1}{1 - \kappa m_{1} \cdot x} \geq\frac{1}{1 - \kappa}.$$
  for some $i \in \{1,2,\ldots,l\}$. Hence there exist $0 < a_{1} \leq a_{2}$, such that $ a_{1} \leq \rho_{k}(x) \leq  a_{2}$.

  Suppose that $G \subseteq \bar{\Omega}^{\ast}$ is a neighborhood of $m_{i}$, such that $m_{j} \notin G$ for all $j \neq i$. If $x_{0} \in \mathcal{T}_{\Gamma({\mathbf{b}_{k}})}(G)$ and $x_{0} \notin E$, then there exists a unique $m \in G$ and $b >0$, such that
  $$\rho_{k}(x_{0}) = \frac{b}{1 - \kappa m \cdot x_{0}} \quad \text{and} \quad \rho_{k}(x) \geq \frac{b}{1 - \kappa m \cdot x}~\text{for all}~x \in \bar{\Omega}.$$
  From the definition of $\Gamma(\mathbf{b}_{k})$ in Lemma \ref{lem3.9}, we have $m = m_{j}$ for some $j = 1,2,\ldots,l$, hence we have $m = m_{j}$, then $\mathcal{T}_{\Gamma({\mathbf{b}_{k}})}(G) \subseteq \mathcal{T}_{\Gamma({\mathbf{b}_{k}})}(m_{i}) \cup E$. For $\vert E \vert = 0$, from Lemma \ref{lem3.6}, we have
  \begin{equation}\label{3.11}
    \begin{aligned}
    \int_{\mathcal{T}_{\Gamma({\mathbf{b}_{0}})}(G)}f(x)t_{\Gamma({\mathbf{b}_{0}})}(x)dx & \leq \int_{\varliminf\limits_{k \rightarrow \infty} \mathcal{T}_{\Gamma({\mathbf{b}_{k}})}(m_{i}) \cup E}f(x)t_{\Gamma({\mathbf{b}_{0}})}(x)dx \\
     & \leq \int_{\varliminf\limits_{k \rightarrow \infty} \mathcal{T}_{\Gamma({\mathbf{b}_{k}})}(m_{i})}f(x)t_{\Gamma({\mathbf{b}_{0}})}(x)dx + \int_{E}f(x)t_{\Gamma({\mathbf{b}_{0}})}(x)dx \\
     & = \int_{\bar{\Omega}}\chi_{\varliminf\limits_{k \rightarrow \infty} \mathcal{T}_{\Gamma({\mathbf{b}_{k}})}(m_{i})}f(x)t_{\Gamma({\mathbf{b}_{0}})}(x)dx.
  \end{aligned}
  \end{equation}
  Obviously, we have
  \begin{equation}\label{3.12}
   \chi_{\varliminf\limits_{k \rightarrow \infty} \mathcal{T}_{\Gamma({\mathbf{b}_{k}})}(m_{i})}(x) = \varliminf\limits_{k \rightarrow \infty} \chi_{\mathcal{T}_{\Gamma({\mathbf{b}_{k}})}(m_{i})}(x).
  \end{equation}
  Applying Theorem \ref{thm3.1}, (\ref{3.12}) and Fatou lemma to (\ref{3.11}), we have
  \begin{equation}\label{3.13}
    \begin{aligned}
    \int_{\mathcal{T}_{\Gamma({\mathbf{b}_{0}})}(G)}f(x)t_{\Gamma({\mathbf{b}_{0}})}(x)dx & \leq \int_{\bar{\Omega}}\varliminf\limits_{k \rightarrow \infty} \chi_{\mathcal{T}_{\Gamma({\mathbf{b}_{k}})}(m_{i})}(x)t_{\Gamma({\mathbf{b}_{k}})}f(x)dx \\
    & \leq \varliminf\limits_{k \rightarrow \infty} \int_{\bar{\Omega}}\chi_{\mathcal{T}_{\Gamma({\mathbf{b}_{k}})}(m_{i})}(x)t_{\Gamma({\mathbf{b}_{k}})}f(x)dx \\
    & = \varliminf\limits_{k \rightarrow \infty} \int_{\mathcal{T}_{\Gamma({\mathbf{b}_{k}})}(m_{i})}t_{\Gamma({\mathbf{b}_{k}})}f(x)dx.
    \end{aligned}
  \end{equation}
  Besides, we also have
  \begin{equation}\label{3.14}
    \chi_{\varlimsup\limits_{k \rightarrow \infty} \mathcal{T}_{\Gamma({\mathbf{b}_{k}})}(m_{i})}(x) = \varlimsup\limits_{k \rightarrow \infty} \chi_{\mathcal{T}_{\Gamma({\mathbf{b}_{k}})}(m_{i})}(x).
  \end{equation}
  From inverse Fatou lemma, Lemma \ref{lem3.6}, Theorem \ref{thm3.1} and (\ref{3.14}), we have
  \begin{equation}\label{3.15}
    \begin{aligned}
     \varlimsup\limits_{k \rightarrow \infty} \int_{\mathcal{T}_{\Gamma({\mathbf{b}_{k}})}(m_{i})}t_{\Gamma({\mathbf{b}_{k}})}f(x)dx  & \leq \int_{\bar{\Omega}}\varlimsup\limits_{k \rightarrow \infty} \chi_{\mathcal{T}_{\Gamma({\mathbf{b}_{k}})}(m_{i})}(x)t_{\Gamma({\mathbf{b}_{k}})}f(x)dx  \\
     & = \int_{\bar{\Omega}}\chi_{\varlimsup\limits_{k \rightarrow \infty} \mathcal{T}_{\Gamma({\mathbf{b}_{k}})}(m_{i})}(x)f(x)t_{\Gamma(\mathbf{b}_{0})}(x)dx \\
     & = \int_{\varlimsup\limits_{k \rightarrow \infty} \mathcal{T}_{\Gamma({\mathbf{b}_{k}})}(m_{i})}f(x)t_{\Gamma(\mathbf{b}_{0})}(x)dx \\
     & \leq \int_{\mathcal{T}_{\Gamma({\mathbf{b}_{0}})}(G)}f(x)t_{\Gamma(\mathbf{b}_{0})}(x)dx.
    \end{aligned}
  \end{equation}
  Combining (\ref{3.13}) with (\ref{3.15}), we get $G_{\Gamma(\mathbf{b})}(m_{i})$ is continuous on the region $R_{\tau}$.
\end{proof}

Based on the above lemmas, now we prove the existence of the weak solution.

\begin{proof}[Proof of Theorem \ref{thm3.3}]
  Fixed $\overline{\mathbf{b}} = (1,\overline{b_{2}},\ldots,\overline{b_{l}})$, consider the set $\overline{W} = \{\mathbf{b}_{i} = (1,b_{2},\ldots,b_{l});
  \newline b_{i} \leq \overline{b_{i}}, i=2,\ldots,l\}$, then from Lemma \ref{lem3.8} and Lemma \ref{lem3.10}, $W$ is a compact set. Define a mapping
  $$d:\overline{W}\rightarrow \mathbb{R};~\mathbf{b} \mapsto \sum_{i=1}^{l}b_{i}.$$
  Let $\mathbf{b}^{\ast} = \arg\max\limits_{\mathbf{b}\in \overline{W}}d(\mathbf{b})$, for the compactness of $\overline{W}$, then we know $d$ is a continuous mapping hence $\mathbf{b}^{\ast}$ exists.

  Taking $\mathbf{b}_{0} = \mathbf{b}^{\ast}$, we first prove that $\displaystyle\int_{\mathcal{T}_{\Gamma(\mathbf{b}_{0})}(m_{i})}f(x)t_{\Gamma(\mathbf{b}_{0})}(x)dx = g_{i}$ for $i = 2,\ldots,l$.

  Indeed, if not, we may assume that $\displaystyle\int_{\mathcal{T}_{\Gamma(\mathbf{b}_{0})}(m_{2})}f(x)t_{\Gamma(\mathbf{b}_{0})}(x)dx < g_{2}.$ Taking $\xi >1$ and let $\mathbf{b}_{\xi} = (1,\xi b_{2}^{\ast},\ldots, b_{l}^{\ast})$. If $x_{0} \in \mathcal{T}_{\Gamma(\mathbf{b}_{\xi})}(m_{i}) \setminus E_{\xi}^{\ast}$, where $E_{\xi}^{\ast}$ is the singular point set of $\mathbf{b}_{\xi}$, then we have
  $$\rho(x_{0}) = \frac{b_{i}^{\ast}}{1 - \kappa m_{i} \cdot x_{0}} \quad \text{and} \quad \rho(x) \geq \frac{b_{i}^{\ast}}{1 - \kappa m_{i} \cdot x}~\text{for all}~x\in \bar{\Omega},$$
  hence $x_{0} \in \mathcal{T}_{\Gamma(\mathbf{b}^{\ast})}(m_{i})$, then $\mathcal{T}_{\Gamma(\mathbf{b}_{\xi}^{\ast})}(m_{i}) \setminus E_{\xi}^{\ast} \subseteq \mathcal{T}_{\Gamma(\mathbf{b}^{\ast})}(m_{i})$. So we have
  $$\int_{\mathcal{T}_{\Gamma(\mathbf{b}_{\xi}^{\ast})}(m_{i})}f(x)t_{\Gamma(\mathbf{b}_{\xi}^{\ast})}dx = \int_{\mathcal{T}_{\Gamma(\mathbf{b}_{\xi}^{\ast})}(m_{i})}f(x)t_{\Gamma(\mathbf{b}^{\ast})}dx \leq \int_{\mathcal{T}_{\Gamma(\mathbf{b}^{\ast})}(m_{i})}f(x)t_{\Gamma(\mathbf{b}^{\ast})}dx.$$
  Let $\xi \rightarrow 1$, then from Lemma \ref{lem3.10}, we have $G_{\Gamma_{\mathbf{b}_{\xi}^{\ast}}} < g_{2}$, hence $b_{\xi}^{\ast} \in W$, this is a contradiction with $d(\mathbf{b}_{\xi}^{\ast}) \leq d(\mathbf{b}_{\xi})$.
  
  Now we prove that $\displaystyle\int_{\mathcal{T}_{\Gamma(\mathbf{b}_{0})}(m_{1})}f(x)t_{\Gamma(\mathbf{b}_{0})}(x)dx > g_{1}$.
  
  Indeed, from Lemma \ref{lem3.8}, we have $\displaystyle\int_{\mathcal{T}_{\Gamma(\mathbf{b}_{0})}(m_{1})}f(x)t_{\Gamma(\mathbf{b}_{0})}(x)dx \geq g_{1}$. If the equality holds, then we have
  $$\int_{\bar{\Omega}}f(x)t_{\Gamma(\mathbf{b}_{0})}(x)dx = \sum_{i = 1}^{l}g_{i} \leq (1 - C_{\varepsilon})\int_{\bar{\Omega}}f(x)dx,$$
  hence 
  $$\int_{\bar{\Omega}}f(x)[1 - C_{\varepsilon} - t_{\Gamma(\mathbf{b}_{0})}(x)]dx \geq 0.$$
  From (\ref{2.17}), we have $t_{\Gamma(\mathbf{b}_{0})}(x) \geq 1 - C_{\varepsilon}$. But for $\inf\limits_{x \in \bar{\Omega}}f(x)>0$, then we must have $t_{\Gamma(\mathbf{b}_{0})}(x) = 1 - C_{\varepsilon}$ for a.e $x \in \bar{\Omega}$. From (\ref{3.5}), for $x \in \mathcal{T}_{\Gamma(\mathbf{b}_{0})}(m_{1}) \backslash E$, we have $\psi(x \cdot m_{1}) = C_{\varepsilon}$, then $\vert \mathcal{T}_{\Gamma(\mathbf{b}_{0})}(m_{1}) \backslash E \vert > 0$.
  
  We claim that the set $D = \{x \cdot m_{1};~x \in \mathcal{T}_{\Gamma(\mathbf{b}_{0})}(m_{1})\}$ is infinite.
  
  Indeed, if not, then there exist $c_{1}, \ldots, c_{n}$, such that $D = {c_{1}, \ldots, c_{n}}$. Let $D_{j} = \{x \in \mathcal{T}_{\Gamma(\mathbf{b}_{0})};~x\cdot m_{1} = c_{j}\}$, then $D = \bigcup\limits_{j=1}^{n} D_{j}$. But $D_{j}$ contains in $S^{n-1}$ intersected with the plane $\{x;~x\cdot m_{1} = c_{j}\}$, hence its spherical measure is 0, then $\vert \mathcal{T}_{\Gamma(\mathbf{b}_{0})}(m_{1}) \backslash E \vert = 0$. This is a contradiction, hence $D = \{x \cdot m_{1};~x \in \mathcal{T}_{\Gamma(\mathbf{b}_{0})}(m_{1})\}$ is infinite. Besides, from Proposition \ref{prop3.1}, we know that the set $\{t;~\psi(t) = c\}$ is a finite set for any constant $c$, then we cannot have $\psi = C_{\varepsilon}$ on $D$. So we must have $\displaystyle\int_{\mathcal{T}_{\Gamma(\mathbf{b}_{0})}(m_{1})}f(x)t_{\Gamma(\mathbf{b}_{0})}(x)dx > g_{1}$.
\end{proof}

\subsection{Existence of the weak solution when $\mu$ is a finite Radon measure}

In this subsection, we assume that $\mu$ is a finite Radon measure, and the existence of the weak solution of the far field refraction problem for the case $\kappa <-1$ with loss of energy in this situation is established by using discrete measures to approximate.

\begin{theorem}
  Suppose that $f$ is integrable on $\bar{\Omega}$ and $\inf\limits_{x\in \bar{\Omega}}f(x)>0$. Let $\mu$ be a Radon measure on $\bar{\Omega}^{\ast}$ and
  \begin{equation}\label{3.16}
    \int_{\bar{\Omega}}f(x)dx \geq \frac{1}{1 - C_{\varepsilon}}\mu(\bar{\Omega}^{\ast}),
  \end{equation}
  where $C_{\varepsilon}$ is defined in Proposition \ref{prop3.1}. Then there exists a refractor $\Gamma$, such that for any Borel subset $\omega \subseteq \bar{\Omega}^{\ast}$, we have
  $$\mu(\omega) \leq \int_{\mathcal{T}_{\Gamma}(\omega)}f(x)t_{\Gamma}(x)dx,$$
  that is, there exists a weak solution of the refraction problem for the case $\kappa < -1$ with emitting illumination intensity $f$ and prescribed refracted intensity $\mu$.
  \label{thm3.4}
\end{theorem}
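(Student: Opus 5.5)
We approximate $\mu$ by discrete measures, solve each discrete problem with Theorem \ref{thm3.3}, and pass to the limit using the compactness and semicontinuity results of Lemma \ref{lem3.6} and Theorem \ref{thm3.1}.

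\textbf{Discretization.} For each $k$, partition $\bar{\Omega}^{\ast}$ into finitely many disjoint Borel sets $\omega_{1}^{k},\ldots,\omega_{l_k}^{k}$ of diameter less than $1/k$. Choose points $m_{i}^{k}\in\omega_{i}^{k}$ and set
\[
\mu_{k}=\sum_{i}\mu(\omega_{i}^{k})\delta_{m_{i}^{k}},
\]
discarding cells of zero mass; if only one cell survives, split it artificially so that $l_k\ge 2$. Then $\mu_{k}(\bar{\Omega}^{\ast})=\mu(\bar{\Omega}^{\ast})$, so (\ref{3.16}) is exactly the hypothesis of Theorem \ref{thm3.3}. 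Moreover $\mu_k\to\mu$ weakly, since for continuous $h$ we have $|\int h\,d\mu_k-\int h\,d\mu|\le \mathrm{osc}_{1/k}(h)\,\mu(\bar\Omega^\ast)$.

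\textbf{Discrete solutions.} Theorem \ref{thm3.3} gives refractors $\Gamma_{k}=\Gamma(\mathbf{b}^{k})$ satisfying $G_{\Gamma_k}(m_i^k)\ge\mu_k(\{m_i^k\})$ for every $i$. Because $\mu_k$ is supported on $\{m_i^k\}$ and the sets $\mathcal T_{\Gamma_k}(m_i^k)$ overlap only on singular sets, we get $G_{\Gamma_k}(\omega)\ge\mu_k(\omega)$ for all Borel $\omega$.

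\textbf{Normalization and limit.} Using Theorem \ref{thm3.2}, rescale so that $\inf\rho_k=1$. Lemma \ref{lem3.2} then gives $1\le\rho_k\le C(\varepsilon,\kappa)$. The proof of Lemma \ref{lem3.1} gives a uniform Lipschitz bound $L=-C/(\varepsilon^2\kappa)$. By Arzel\`a--Ascoli, a subsequence satisfies $\rho_k\to\rho$ uniformly, and by Lemma \ref{lem3.6}(a) the limit $\Gamma$ is a refractor.

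\textbf{Passing to the limit on compact sets.} This is the main obstacle: we need
\[
\limsup_k G_{\Gamma_k}(K)\le G_\Gamma(K)\quad\text{for compact } K.
\]
Set $F_k=\mathcal T_{\Gamma_k}(K)$ and let $E$ be the null union of the singular sets. Since $f\,t_k\le f$ is integrable, the reverse Fatou lemma gives
\[
\limsup_k\int \chi_{F_k}t_kf\le\int\limsup_k \chi_{F_k}t_kf.
\]
By Theorem \ref{thm3.1}(a), for $y\notin E$ the integrand equals $t(y)\chi_{\limsup F_k}(y)$, and $\limsup F_k\subseteq\mathcal T_\Gamma(K)$ by Lemma \ref{lem3.6}(b). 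Combining with weak convergence of measures on the closed set $K$,
\[
\mu(K)\le\limsup_k\mu_k(K)\le\limsup_k G_{\Gamma_k}(K)\le G_\Gamma(K).
\]
Here I am worried that Theorem \ref{thm3.1} only provides convergence along subsequences depending on $y$. If that is a problem, I would instead argue directly that, off $E$, the normals converge along the full sequence. This should hold because at a regular point the supporting hyperboloid is unique, so every subsequential limit of $(m_k,b_k)$ coincides.

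\textbf{Extension to Borel sets.} By Remark \ref{rem3.3}, $G_\Gamma$ is a finite Borel measure, and $\mu$ is Radon, hence inner regular. For Borel $\omega$,
\[
\mu(\omega)=\sup_{K\subseteq\omega}\mu(K)\le\sup_{K\subseteq\omega}G_\Gamma(K)\le G_\Gamma(\omega),
\]
using monotonicity of $\mathcal T_\Gamma$. This gives the conclusion of Theorem \ref{thm3.4}.
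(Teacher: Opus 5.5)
The step that fails is the first inequality in your chain
\[
\mu(K)\le\limsup_k\mu_k(K)\le\limsup_k G_{\Gamma_k}(K)\le G_\Gamma(K).
\]
For a closed set $K$, weak convergence $\mu_k\to\mu$ gives only the reverse inequality $\limsup_k\mu_k(K)\le\mu(K)$ (portmanteau). Your version is false even for your particular $\mu_k$. Suppose $\mu$ has an atom at $m_0$, take $K=\{m_0\}$, and let the point chosen in the cell containing $m_0$ differ from $m_0$. Then $\mu_k(K)=0<\mu(K)$ for every $k$. So, as written, nothing transfers $\mu_k\le G_{\Gamma_k}$ to an inequality for $\mu$, and the inner-regularity step has nothing to act on.

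The repair is short and uses the structure of $\mu_k$. For $\delta>0$ let $K_\delta=\{m\in\bar{\Omega}^{\ast}:\operatorname{dist}(m,K)\le\delta\}$, which is compact. If $1/k<\delta$, every cell $\omega_i^k$ meeting $K$ has its point $m_i^k$ in $K_\delta$, so
\[
\mu(K)\le\sum_{\omega_i^k\cap K\neq\emptyset}\mu(\omega_i^k)\le\mu_k(K_\delta)\le G_{\Gamma_k}(K_\delta).
\]
Your compact-set estimate applied to $K_\delta$ then gives $\mu(K)\le G_\Gamma(K_\delta)$. Since $G_\Gamma$ is a finite measure (Remark~\ref{rem3.3}) and $K_\delta\downarrow K$, letting $\delta\to0$ yields $\mu(K)\le G_\Gamma(K)$, and your inner-regularity argument finishes the proof.

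Alternatively, test against nonnegative $h\in C(\bar{\Omega}^{\ast})$. The layer-cake formula and your compact-set bound give $\limsup_k\int h\,dG_{\Gamma_k}\le\int h\,dG_\Gamma$, hence $\int h\,d\mu=\lim_k\int h\,d\mu_k\le\int h\,dG_\Gamma$. This is the route implicit in the paper. The paper proves weak convergence $G_{\Gamma_\iota}\to G_\Gamma$, including an open-set lower bound that is not needed for the inequality, and leaves the comparison with $\mu$ unstated.

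Everything else in your proposal is sound and matches the paper. Your worry about Theorem~\ref{thm3.1} is justified, and your fix is right. Every subsequential limit of $(m_k,b_k)$ gives a supporting hyperboloid of $\Gamma$ at $y$, which is unique for $y\notin E$ by Remark~\ref{rem3.1}. Hence $t_k(y)\to t(y)$ along the whole sequence.

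One minor point: if $\mu=c\,\delta_{m_0}$, no splitting produces two cells of positive mass. In that case $H(m_0,1)$ alone is a solution, because $t_\Gamma\ge 1-C_\varepsilon$.
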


\begin{proof}
  Let $\iota$ be an integer, $\iota \geq 2$. Segmenting $\bar{\Omega}^{\ast}$ into finite disjoint subsets $\omega_{1}^{\iota}, \omega_{2}^{\iota}, \ldots,\omega_{l_{\iota}}^{\iota}$, such that $diam(\omega_{i}^{\iota}) \leq \dfrac{1}{\iota}$ for $i = 1,2,\ldots,l_{\iota}$. Take $m_{i}^{\iota} \in \omega_{i}^{\iota}$ and consider the measure $\mu_{\iota} := \sum\limits_{i = 1}^{l _{\iota}}\mu(\omega_{i}^{\iota}) \delta_{m_{i}^{\iota}}$ defined on $\bar{\Omega}^{\ast}$.

  We claim that $\mu_{\iota} \rightarrow \mu$ weakly as $\iota \rightarrow \infty$.

  Indeed, take $h \in C(\bar{\Omega}^{\ast})$, then we have
  \begin{align*}
    \int_{\bar{\Omega}^{\ast}}h d\mu_{\iota} - \int_{\bar{\Omega}^{\ast}}h d\mu & = \sum_{i = 1}^{l_{\iota}}\int_{\bar{\Omega}^{\ast}}h\mu(\omega_{i}^{\iota})d\delta_{m_{i}^{\iota}} - \int_{\bar{\Omega}^{\ast}}h d\mu \\
      & = \sum_{i = 1}^{l_{\iota}} \int_{\omega_{i}^{\iota}} h(m_{i}^{\iota})d\mu - \sum_{i = 1}^{l_{\iota}}\int_{\omega_{i}^{\iota}}h(x)d\mu 
        = \sum_{i = 1}^{l_{\iota}} \int_{\omega_{i}^{\iota}}(h(m_{i}^{\iota}) - h(x))d\mu.
  \end{align*}
  For $h \in C(\bar{\Omega}^{\ast})$ and $diam(\omega_{i}^{\iota}) \leq \dfrac{1}{\iota}$, hence $\displaystyle\int_{\bar{\Omega}^{\ast}}hd\mu_{\iota} \rightarrow \displaystyle\int_{\bar{\Omega}^{\ast}}hd\mu$ as $\iota \rightarrow \infty$. Consequently, $\mu_{\iota} \rightarrow \mu$ weakly as $\iota \rightarrow \infty$.

  From (\ref{3.16}), we have $\mu_{\iota}(\bar{\Omega}^{\ast}) = \mu(\bar{\Omega}^{\ast}) \leq (1 - C_{\varepsilon})\displaystyle\int_{\bar{\Omega}}f(x)dx$, then from Theorem \ref{thm3.3}, there exists a refractor $\Gamma_{\iota} = \{\rho_{\iota}(x)x;~\rho_{\iota}(x) = \max\limits_{1 \leq i \leq l_{\iota}}\dfrac{b_{i}}{1 - \kappa m_{i}^{\iota} \cdot x}\}$, such that $\mu_{\iota}(\omega) = \displaystyle\int_{\mathcal{T}_{\Gamma_{\iota}}(\omega)}f(x)t_{\Gamma_{\iota}}(x)dx$. Normalized $\Gamma_{\iota}$, such that $\inf\limits_{x \in \bar{\Omega}}\rho_{\iota}(x) = 1$, then from Lemma \ref{lem3.2}, there exists a constant $C>0$, such that $\sup\limits_{x \in \bar{\Omega}}\rho_{\iota}(x) \leq C$ for all $\iota \geq 1$.

  Besides, if $x_{0}, x_{1} \in \bar{\Omega}$ and $H(m_{0},b_{0})$ supports $\Gamma_{\iota}$ at $\rho_{\iota}(x_{0})x_{0}$, then for $x_{1} \in \bar{\Omega}$, we have
  \begin{align*}
    \vert \rho_{\iota}(x_{0}) - \rho_{\iota}(x_{1}) \vert & \leq \vert \frac{b_{0}}{1 - \kappa m_{0} \cdot x_{0}} - \frac{b_{0}}{1 - \kappa m_{0} \cdot x_{1}} \vert \\
      & \leq \frac{\kappa b_{0}}{(1 - \kappa m_{0} \cdot x_{0})(1 - \kappa m_{0} \cdot x_{1})} \Vert x_{0} - x_{1} \Vert \\
      & \leq \frac{b_{0}}{-\kappa \varepsilon^{2}} \Vert x_{0} - x_{1} \Vert  
        \leq \frac{C}{-\kappa \varepsilon^{2}} \Vert x_{0} - x_{1} \Vert.
  \end{align*}
  Exchanging the roles of $x_{0}$ and $x_{1}$, we have
  $$\vert \rho_{\iota}(x_{1}) - \rho_{\iota}(x_{0}) \vert \leq \frac{C}{-\kappa \varepsilon^{2}} \Vert x_{1} - x_{0} \Vert,$$
  hence $\{\rho_{\iota}(x);~\iota \geq 1\}$ is a family of bounded uniformly and equicontinuous functions. Then from Arezlà-Ascoli Theorem, $\rho_{\iota}(x) \rightarrow \rho(x)$ uniformly as $\iota \rightarrow \infty$ for all $x \in \bar{\Omega}$. Then from Lemma \ref{lem3.6} $(a)$, $\Gamma = \{\rho(x)x;~x \in \bar{\Omega}\}$ is a refractor. 
  
  Let $G_{\Gamma{\iota}}(\omega) := \displaystyle \int_{\mathcal{T}_{\Gamma_{\iota}}(\omega)}f(x)t_{\Gamma_{\iota}}(x)dx$ and $G_{\Gamma}(\omega) := \displaystyle \int_{\mathcal{T}_{\Gamma}(\omega)}f(x)t_{\Gamma}(x)dx$. In order to prove the existence of the weak solution, we still need to prove that $G_{\Gamma{\iota}} \rightarrow G_{\Gamma}$ weakly as $\iota \rightarrow \infty$.
  
  Indeed, on the one hand, for any compact set $K \subseteq \bar{\Omega}^{\ast}$, from inverse Fatou lemma, we have
  \begin{align*}
    \varlimsup_{\iota \rightarrow \infty} G_{\Gamma{\iota}}(K) & = \varlimsup_{\iota \rightarrow \infty} \int_{\mathcal{T}_{\Gamma_{\iota}}(K)}f(x)t_{\Gamma_{\iota}}(x)dx \\
      & \leq \int_{\bar{\Omega}} \varlimsup_{\iota \rightarrow \infty} \chi_{\mathcal{T}_{\Gamma_{\iota}}(K)} f(x)t_{\Gamma_{\iota}}(x)dx\\
      & \leq \int_{\mathcal{T}_{\Gamma}(K)}f(x)t_{\Gamma}(x)dx = G_{\Gamma}(K).
  \end{align*}
  On the other hand, for any open set $F \subseteq \bar{\Omega}^{\ast}$, from Fatou lemma, we have
  \begin{align*}
    G_{\Gamma}(F) & = \int_{\mathcal{T}_{\Gamma}(F)}f(x)t_{\Gamma}(x)dx \\
     & \leq \int_{\bar{\Omega}}\varliminf_{\iota \rightarrow \infty}\chi_{\mathcal{T}_{\Gamma_{\iota}}(F)}f(x)t_{\Gamma}(x)dx \\
     & \leq \varliminf_{\iota \rightarrow \infty} \int_{\bar{\Omega}}\chi_{\mathcal{T}_{\Gamma_{\iota}}(F)}f(x)t_{\Gamma}(x)dx = \varliminf_{\iota \rightarrow \infty} G_{\Gamma{\iota}}(F).
  \end{align*}
  Consequently, we have $G_{\Gamma{\iota}} \rightarrow G_{\Gamma}$ weakly as $\iota \rightarrow \infty$, hence we have proved the existence of the weak solution.
\end{proof}

\section{Far field refraction problem for the case $-1 <\kappa <0$ with loss of energy}\label{Section 4}

\sloppy{}

In this section, we use the similar way as Section \ref{Section 3} to study the far field refraction problem for the case $-1 <\kappa <0$ with loss of energy. Recall from (\ref{2.7}) in Remark \ref{rem2.1}, we mast have
\begin{equation*}
  \inf_{x \in \bar{\Omega}, m \in \bar{\Omega}^{\ast}} x \cdot m \geq \kappa + \varepsilon
\end{equation*}
in this case.

\subsection{Refractor and its properties}

The definition of refractor in the case $-1 < \kappa<0$ also stems from \cite{St17}.

\begin{definition}
  A parameterized surface $\Gamma$ in $\mathbb{R}^{n}$ given by $\Gamma = \{\rho(x)x; ~\rho \in C(\bar{\Omega})\}$ is a refractor from $\bar{\Omega}$ to $\bar{\Omega}^{\ast}$ in the case $-1 < \kappa<0$, if for any $x_{0} \in \bar{\Omega}$, there exists a semi-ellipsoid defined as $E(m,b) = \{\rho(x)x; ~\rho(x) = \dfrac{b}{1 - \kappa m \cdot x}, ~x\in S^{n-1}, ~x \cdot m \geq \kappa\}$, such that $\rho(x_{0}) = \dfrac{b}{1 - \kappa m\cdot x_{0}}$ and $\rho(x) \leq \dfrac{b}{1 - \kappa m\cdot x}$ for all $x\in \bar{\Omega}$. Such $E(m,b)$ is called a supporting ellipsoid to $\Gamma$ at the point $\rho(x_{0})x_{0}$.
  \label{def4.1}
\end{definition}

The following Figure \ref{fig4} shows the semi-ellipsoid which refracts all ray emitted from the source $O$ to a specific direction for the case $-1<\kappa<0$.

\begin{figure}[h]
  \includegraphics[width=8.3cm]{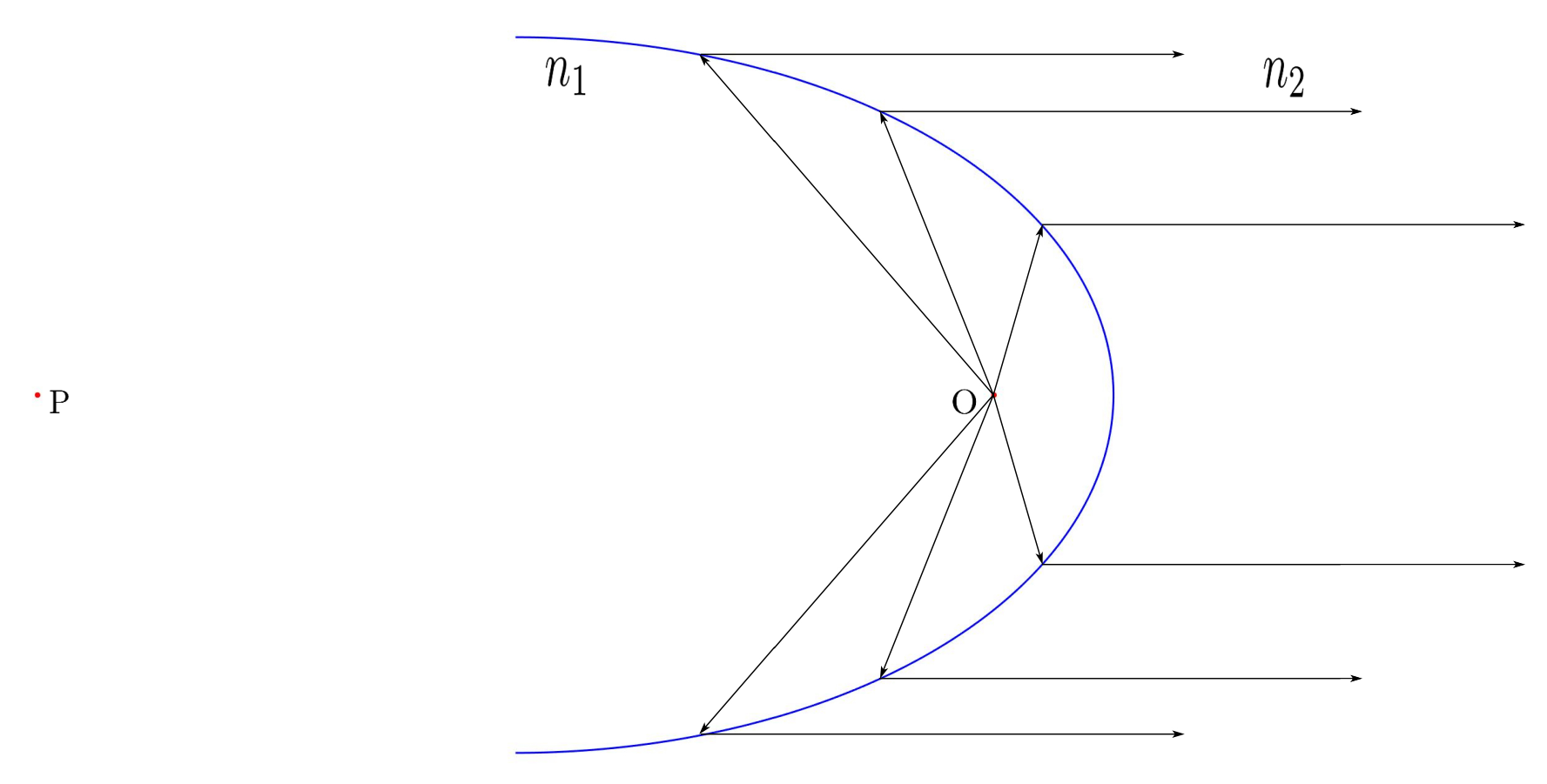}
\caption{Ellipsoid refracting when $-1<\kappa<0$, where $O$ and $P$ are focus of ellipsoid.}\label{fig4}
\end{figure}

The following lemmas are similar as Lemmas \ref{lem3.1} and   \ref{lem3.2}.

\begin{lemma}
  Any refractor is globally Lipschitz continuous on $\bar{\Omega}$, hence the set of singular points is a null set.
  \label{lem4.1}
\end{lemma}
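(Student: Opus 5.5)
The argument will follow the proof of Lemma \ref{lem3.1}, replacing supporting semi-hyperboloids by supporting semi-ellipsoids. For $-1<\kappa<0$ the inequality of the supporting function is reversed, so the two one-sided estimates have to be taken in the opposite order. By (\ref{2.7}), for $x\in\bar{\Omega}$ and $m\in\bar{\Omega}^{\ast}$ we have $x\cdot m\geq\kappa+\varepsilon$. Since $\kappa<0$, this gives
\[
1-\kappa\, m\cdot x\geq 1-\kappa(\kappa+\varepsilon)=1-\kappa^{2}-\kappa\varepsilon\geq -\kappa\varepsilon>0 .
\]
It also gives the upper bound $1-\kappa m\cdot x\leq 1-\kappa$. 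This plays the role of (\ref{3.1}) and keeps every denominator uniformly away from zero.

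Let $\Gamma=\{\rho(x)x\}$ be a refractor and fix $x,y\in\bar{\Omega}$. Since $\rho\in C(\bar{\Omega})$, we have $\rho\leq M$ for some $M$. Let $E(m,b)$ be a supporting ellipsoid at $\rho(x)x$. Then $b=\rho(x)(1-\kappa m\cdot x)\leq M(1-\kappa)$. The supporting property gives $\rho(y)\leq \frac{b}{1-\kappa m\cdot y}$ together with equality at $x$, so
\[
\rho(y)-\rho(x)\leq \frac{b}{1-\kappa m\cdot y}-\frac{b}{1-\kappa m\cdot x}
=\frac{b\kappa\, m\cdot(y-x)}{(1-\kappa m\cdot x)(1-\kappa m\cdot y)}
\leq \frac{M(1-\kappa)|\kappa|}{(\kappa\varepsilon)^{2}}\|x-y\|.
\]
This is a one-sided bound. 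In the hyperbolic case one bounds $\rho(x)-\rho(y)$ from the support at $x$; here the ellipsoid lies above $\Gamma$, so the support at $x$ bounds $\rho(y)-\rho(x)$ instead.

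Next I exchange the roles of $x$ and $y$, using a supporting ellipsoid at $\rho(y)y$. This gives the same bound for $\rho(x)-\rho(y)$. Hence $|\rho(x)-\rho(y)|\leq L\|x-y\|$ with $L=M(1-\kappa)/(-\kappa\varepsilon^{2})$, so $\rho$ is globally Lipschitz on $\bar{\Omega}$.

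Rademacher's theorem \cite{EV10} then shows that $\rho$ is differentiable almost everywhere on $S^{n-1}$. The set of singular points, that is, points where $\Gamma$ has no unique tangent plane or has more than one supporting ellipsoid, is therefore a null set.

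The only step needing care is the sign bookkeeping. Because $\kappa<0$ and the ellipsoid lies above $\Gamma$, the right lower bound on the denominators must come from (\ref{2.7}) rather than from (\ref{3.1}), and $b$ must be bounded using $\sup\rho$ together with $1-\kappa m\cdot x\leq 1-\kappa$. Everything else follows the proof of Lemma \ref{lem3.1} directly.
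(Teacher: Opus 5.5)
Your proof is correct and follows the paper's approach: the paper gives no separate proof of Lemma \ref{lem4.1} and defers to the proof of Lemma \ref{lem3.1}, which you reproduce with supporting ellipsoids in place of hyperboloids, bounding $b\le M(1-\kappa)$, bounding the denominators below via (\ref{2.7}), and then invoking Rademacher's theorem. Your one-sided estimate followed by exchanging $x$ and $y$ is cleaner than the absolute-value step written in the proof of Lemma \ref{lem3.1}; also, for $-1<\kappa<0$ one has $1-\kappa\, m\cdot x\geq 1+\kappa>0$ for all unit vectors, so $\varepsilon$ is not actually needed here.
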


\begin{lemma}
  Suppose $\Gamma = \{\rho(x)x ; ~\rho \in C(\bar{\Omega})\}$ is a refractor from $\bar{\Omega}$ to $\bar{\Omega}^{\ast}$, such that $\inf\limits_{x\in \bar{\Omega}}\rho(x) = 1$, then there exists a constant $C>0$ depending on $\kappa$, such that $\sup\limits_{x\in \bar{\Omega}}\rho(x) \leq C$.
  \label{lem4.2}
\end{lemma}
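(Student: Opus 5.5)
The plan is to adapt the argument of Lemma \ref{lem3.2}, but with the roles of infimum and supremum exchanged. In the case $-1<\kappa<0$, Definition \ref{def4.1} says a supporting semi-ellipsoid lies \emph{above} the refractor, $\rho(x)\le \frac{b}{1-\kappa m\cdot x}$. So the natural point at which to take a supporting ellipsoid is a point where $\rho$ attains its infimum, not its supremum.

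First, since $\rho\in C(\bar{\Omega})$ and $\bar{\Omega}$ is compact, there is $x_{0}\in\bar{\Omega}$ with $\rho(x_{0})=\inf_{\bar{\Omega}}\rho=1$. By Definition \ref{def4.1}, there is a supporting semi-ellipsoid $E(m_{0},b_{0})$ with $m_{0}\in\bar{\Omega}^{\ast}$ and $b_{0}>0$ such that
\[
1=\rho(x_{0})=\frac{b_{0}}{1-\kappa m_{0}\cdot x_{0}}, \qquad \rho(x)\le \frac{b_{0}}{1-\kappa m_{0}\cdot x} \quad \text{for all } x\in\bar{\Omega}.
\]
From the equality, $b_{0}=1-\kappa m_{0}\cdot x_{0}$.

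Second, I will bound the denominators using only $\kappa$. For any unit vectors $m,x$ we have $m\cdot x\in[-1,1]$, and $-\kappa>0$, so
\[
0<1+\kappa\le 1-\kappa\, m\cdot x\le 1-\kappa .
\]
The left inequality is strictly positive precisely because $\kappa>-1$. This gives $b_{0}\le 1-\kappa$, and for every $x\in\bar{\Omega}$,
\[
\rho(x)\le \frac{b_{0}}{1-\kappa m_{0}\cdot x}\le \frac{1-\kappa}{1+\kappa}.
\]
Hence $\sup_{\bar{\Omega}}\rho\le C:=\frac{1-\kappa}{1+\kappa}$, which depends only on $\kappa$, as claimed.

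There is no real obstacle here. The only point requiring care is to use the reversed support inequality correctly, taking the support at the minimum point. One should also note that the lower bound $1-\kappa m\cdot x\ge 1+\kappa>0$ holds on the whole sphere. For this reason the constant does not need $\varepsilon$ from (\ref{2.7}), unlike Lemma \ref{lem3.2}, where $1-\kappa m\cdot x$ could approach $0$ and the bound had to use (\ref{3.1}). If one prefers, (\ref{2.7}) gives the alternative bound $1-\kappa m\cdot x\ge 1-\kappa(\kappa+\varepsilon)$, but it is not needed.
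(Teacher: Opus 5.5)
Your proof is correct and follows the paper's argument. You take the supporting semi-ellipsoid $E(m_{0},b_{0})$ at a minimum point of $\rho$, deduce $b_{0}=1-\kappa m_{0}\cdot x_{0}\le 1-\kappa$, and bound the denominator $1-\kappa m_{0}\cdot x$ from below. The paper instead uses $m_{0}\cdot x\ge\kappa$ to get $1-\kappa m_{0}\cdot x\ge 1-\kappa^{2}$ and the sharper constant $C=\frac{1}{1+\kappa}$. Your cruder bound $m_{0}\cdot x\ge -1$ gives $C=\frac{1-\kappa}{1+\kappa}$, which is equally valid and does not need the constraint $x\cdot m\ge\kappa$.
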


\begin{proof}
    Suppose that there exists $x_{0} \in \bar{\Omega}$, such that $\rho(x_{0}) = \inf\limits_{x\in \bar{\Omega}}\rho(x)$, and let $E(m_{0},b_{0})$ be the supporting ellipsoid to $\Gamma$ at $\rho(x_{0})x_{0}$. Then we have
    $1 = \rho(x_{0}) = \dfrac{b_{0}}{1 - \kappa m_{0} \cdot x_{0}},$ and $\rho(x) \leq \dfrac{b_{0}}{1 - \kappa m_{0} \cdot x}.$
    Hence we have $b_{0} = 1- \kappa m_{0} \cdot x_{0} \leq 1 - \kappa$.
    Consequently,
    $$\rho(x) \leq \frac{b_{0}}{1 - \kappa m_{0} \cdot x} \leq \frac{1 - \kappa}{1 - \kappa^{2}} \leq \frac{1}{1 + \kappa}.$$
    Then we get $\sup\limits_{x\in \bar{\Omega}}\rho(x) \leq C$.
\end{proof}

\begin{remark}
  Compared with Lemma \ref{lem3.2}, in this case, the constant $C$ only depends on $\kappa$.
  \label{rem4.1}
\end{remark}

\begin{remark}
  If a refractor $\Gamma$ parameterized by $\rho$ has two distinct supporting semi-ellipsoid at $\rho(x)x$, then $\rho(x)x$ is a singular point of $\Gamma$.
  \label{rem4.2}
\end{remark}

We can also define refractor mapping and trace mapping for the case $-1 < \kappa <0$ and discuss some properties of them.

\begin{definition}
  Suppose that the refractor $\Gamma = \{\rho(x)x;~x \in \bar{\Omega}\}$ is given, the refractor mapping of $\Gamma$ is a multi-value map defined by
  \begin{equation}\label{4.1}
    \mathcal{N}_{\Gamma}(x_{0}) = \{m \in \bar{\Omega}^{\ast};~E(m,b) ~supports~\Gamma ~at ~\rho(x_{0})x_{0} ~ for ~ some ~ b>0 \}.
  \end{equation}
  Given $m \in \bar{\Omega}^{\ast}$, the trace mapping of $\Gamma$ is defined by
  \begin{equation}\label{4.2}
    \mathcal{T}_{\Gamma}(m_{0}) = \{x \in \bar{\Omega};~m_{0}\in \mathcal{N}_{\Gamma}(x_{0})\}.
  \end{equation}
  \label{def4.2}
\end{definition}

The proofs of the following properties are analogous to those in Section  \ref{sec3.1}.

\begin{lemma}
  If $m \in \bar{\Omega}^{\ast}$, then $\mathcal{T}_{\Gamma}(m)$ is a closed set in $\bar{\Omega}$.
  \label{lem4.3}
\end{lemma}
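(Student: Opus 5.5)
The plan is to mirror the proof of Lemma \ref{lem3.3}, with supporting semi-hyperboloids replaced by supporting semi-ellipsoids and the inequality reversed. Fix $m \in \bar{\Omega}^{\ast}$ and take a sequence $x_{n} \in \mathcal{T}_{\Gamma}(m)$ with $x_{n} \rightarrow x_{0}$. Since $\bar{\Omega}$ is closed, $x_{0} \in \bar{\Omega}$. The goal is to produce some $b_{0}>0$ such that $E(m,b_{0})$ supports $\Gamma$ at $\rho(x_{0})x_{0}$.

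For each $n$ there is $b_{n}>0$ with
\[
\rho(x_{n}) = \frac{b_{n}}{1-\kappa m\cdot x_{n}}
\quad\text{and}\quad
\rho(x) \leq \frac{b_{n}}{1-\kappa m\cdot x}\ \text{for all } x\in\bar{\Omega}.
\]
Unlike the proof of Lemma \ref{lem3.3}, I will not assume that $b_{n}$ is the same for all $n$; I will only use boundedness. From the first identity, $b_{n} = \rho(x_{n})(1-\kappa m\cdot x_{n})$. Because $-1<\kappa<0$, we have $1-\kappa m\cdot x \in [1+\kappa, 1-\kappa]$, so this factor is bounded above and away from zero. Since $\rho$ is continuous and positive on the compact set $\bar{\Omega}$, we have $0<a_{1}\leq\rho\leq a_{2}$, and therefore $a_{1}(1+\kappa)\leq b_{n}\leq a_{2}(1-\kappa)$. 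Passing to a subsequence gives $b_{n}\rightarrow b_{0}>0$.

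Now pass to the limit in both relations. By continuity of $\rho$ (Lemma \ref{lem4.1} gives Lipschitz continuity), and since the denominators stay at least $1+\kappa>0$, we get $\rho(x_{0}) = b_{0}/(1-\kappa m\cdot x_{0})$ and $\rho(x)\leq b_{0}/(1-\kappa m\cdot x)$ for every $x\in\bar{\Omega}$. We also need $x_{0}\cdot m\geq \kappa$, so that the point lies on the semi-ellipsoid. This follows from (\ref{2.7}), or alternatively from passing to the limit in $x_{n}\cdot m\geq\kappa$. Hence $E(m,b_{0})$ supports $\Gamma$ at $\rho(x_{0})x_{0}$, so $m\in\mathcal{N}_{\Gamma}(x_{0})$, that is, $x_{0}\in\mathcal{T}_{\Gamma}(m)$.

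The only delicate point is the uniform two-sided bound on $b_{n}$, which is needed to extract a convergent subsequence with a positive limit. In this case it is easier than for $\kappa<-1$, because $1-\kappa m\cdot x\geq 1+\kappa>0$ automatically, without using $\varepsilon$. The remaining steps are routine limit passages.
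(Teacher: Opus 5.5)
Your argument is correct and is essentially the paper's. The paper proves this lemma by analogy with Lemma \ref{lem3.3}: take $x_n\to x_0$ in $\mathcal{T}_{\Gamma}(m)$ and pass to the limit in the two supporting relations, with the inequality reversed for ellipsoids. The one difference is that you extract a convergent subsequence of $b_n$ rather than using a single $b$. That step is harmless but unnecessary, because for fixed $m$ the supporting constant is forced to be $b=\max_{x\in\bar{\Omega}}\rho(x)(1-\kappa m\cdot x)$, so the paper's use of a fixed $b$ is in fact justified.
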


\begin{lemma}
  For any $F \in \bar{\Omega}^{\ast}$, we have
  
   (a) $[\mathcal{T}_{\Gamma}(F)]^{c} \subseteq \mathcal{T}_{\Gamma}(F^{c})$;
   
   (b) The set $\mathcal{M} = \{F \subseteq \bar{\Omega}^{\ast};~\mathcal{T}_{\Gamma}(F)~is~Lebesgue~measurable\}$ is a $\sigma$-algebra containing all Borel sets in $\bar{\Omega}^{\ast}$.
  \label{lem4.4}
\end{lemma}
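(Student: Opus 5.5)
The proof will follow that of Lemma \ref{lem3.4}, with supporting semi-hyperboloids replaced by supporting semi-ellipsoids $E(m,b)$ and the inequality direction in Definition \ref{def4.1} reversed.

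For (a), take $x\in[\mathcal{T}_{\Gamma}(F)]^{c}$. Since $\Gamma$ is a refractor in the sense of Definition \ref{def4.1}, there is a supporting ellipsoid at $\rho(x)x$, so $\mathcal{N}_{\Gamma}(x)\neq\emptyset$. Because $x\notin\mathcal{T}_{\Gamma}(F)$, we have $\mathcal{N}_{\Gamma}(x)\cap F=\emptyset$. Hence $\mathcal{N}_{\Gamma}(x)\cap F^{c}\neq\emptyset$, which gives $x\in\mathcal{T}_{\Gamma}(F^{c})$.

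For the $\sigma$-algebra part of (b), three facts are needed.
\begin{enumerate}
\item We have $\mathcal{T}_{\Gamma}(\emptyset)=\emptyset$ and $\mathcal{T}_{\Gamma}(\bar{\Omega}^{\ast})=\bar{\Omega}$.
\item The identity $\mathcal{T}_{\Gamma}(\bigcup F_i)=\bigcup\mathcal{T}_{\Gamma}(F_i)$ holds purely set-theoretically, so $\mathcal{M}$ is closed under countable unions.
\item For complements, write
$$\mathcal{T}_{\Gamma}(F^{c})=[\mathcal{T}_{\Gamma}(F)]^{c}\cup[\mathcal{T}_{\Gamma}(F)\cap\mathcal{T}_{\Gamma}(F^{c})],$$
using (a).
\end{enumerate}
In item 3, every point of $\mathcal{T}_{\Gamma}(F)\cap\mathcal{T}_{\Gamma}(F^{c})$ has two supporting ellipsoids with distinct directions $m\in F$, $m'\in F^{c}$. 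By Remark \ref{rem4.2} it is therefore a singular point, and by Lemma \ref{lem4.1} the singular set is null. Hence $\mathcal{T}_{\Gamma}(F^{c})$ is Lebesgue measurable whenever $\mathcal{T}_{\Gamma}(F)$ is.

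For the Borel part of (b), I will show that $\mathcal{T}_{\Gamma}(K)$ is closed, hence compact, for every closed $K\subseteq\bar{\Omega}^{\ast}$. Closed sets then lie in $\mathcal{M}$, and so do all Borel sets. Take $x_i\in\mathcal{T}_{\Gamma}(K)$ with $x_i\to x_0$, and $m_i\in\mathcal{N}_{\Gamma}(x_i)\cap K$ with parameters $b_i$. These satisfy
$$\rho(x)(1-\kappa m_i\cdot x)\le b_i \ \text{ for all } x\in\bar{\Omega}, \qquad \rho(x_i)(1-\kappa m_i\cdot x_i)=b_i.$$

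The main step is bounding $b_i$ away from $0$ and $\infty$. Let $0<a_1\le\rho\le a_2$ (possible since $\rho$ is continuous and positive on a compact set). Since $-1<\kappa<0$ and $x\cdot m\ge\kappa+\varepsilon$, we have
$$1-\kappa m\cdot x\ \ge\ 1-\kappa(\kappa+\varepsilon)\cdot\operatorname{sign}\ \ \text{(more precisely, } 1-\kappa m\cdot x\ge 1+\kappa>0\text{)}$$
and $1-\kappa m\cdot x\le 1-\kappa$. The equality at $x_i$ then gives
$$a_1(1+\kappa)\le b_i\le a_2(1-\kappa).$$

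Passing to subsequences, $m_i\to m_0\in K$ (using compactness of $K$) and $b_i\to b_0>0$. Continuity of $\rho$ allows taking limits in both relations, so $E(m_0,b_0)$ supports $\Gamma$ at $\rho(x_0)x_0$. Thus $x_0\in\mathcal{T}_{\Gamma}(m_0)\subseteq\mathcal{T}_{\Gamma}(K)$.

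The only point needing care beyond Lemma \ref{lem3.4} is the lower bound on $b_i$. In the ellipsoid case the denominator is bounded below by $1+\kappa>0$ directly, with no need for $\varepsilon$, so this step is actually simpler than in the hyperbolic case.
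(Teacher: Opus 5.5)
Your proposal is correct and is the argument the paper intends: it gives no separate proof and says the argument is analogous to Lemma \ref{lem3.4}, and you transcribe that proof to semi-ellipsoids (nonemptiness of $\mathcal{N}_{\Gamma}(x)$ for (a), the decomposition of $\mathcal{T}_{\Gamma}(F^{c})$ modulo the null singular set for complements, and closedness of $\mathcal{T}_{\Gamma}(K)$ via $a_{1}(1+\kappa)\le b_{i}\le a_{2}(1-\kappa)$ for Borel sets). The only blemish is the garbled display containing ``sign'', which should simply read $1-\kappa m\cdot x\ge 1+\kappa>0$ (valid since $m\cdot x\ge -1$ and $-\kappa>0$).
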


\begin{lemma}
  Suppose that $E(m_{k},b_{k})$ is a sequence of semi-ellipsoid, and $m_{k}\rightarrow m_{0}$, $b_{k}\rightarrow b_{0}$ as $k\rightarrow \infty$. Let $z_{k} \in E(m_{k},b_{k})$ with $z_{k} \rightarrow z_{0}$ as $k\rightarrow \infty$. Then $z_{0} \in E(m_{0},b_{0})$, and the normal $\nu_{k}(z_{k})$ to the semi-ellipsoid $E(m_{k},b_{k})$ at $z_{k}$ satisfies $\nu_{k}(z_{k}) \rightarrow \nu(z_{0})$ the normal to the semi-ellipsoid $E(m_{0},b_{0})$ at the point $z_{0}$.
  \label{lem4.5}
\end{lemma}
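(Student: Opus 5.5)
The proof will mirror Lemma \ref{lem3.5}. The only change is that the semi-ellipsoid $E(m,b)$ now replaces the semi-hyperboloid. I will work with the Cartesian description of $E(m,b)$. A point $z=\rho(x)x$ with $\rho(x)=\frac{b}{1-\kappa m\cdot x}$ and $x=z/|z|$ satisfies $|z|-\kappa m\cdot z=b$, with the side condition $\frac{z}{|z|}\cdot m\ge \kappa$. So $E(m,b)$ is a piece of the zero set of $F_{m,b}(z)=|z|-\kappa m\cdot z-b$. Its gradient is $\nabla F_{m,b}(z)=\frac{z}{|z|}-\kappa m$, and the unit normal is
$$\nu_{m}(z)=\frac{\frac{z}{|z|}-\kappa m}{\bigl|\frac{z}{|z|}-\kappa m\bigr|}.$$
This is consistent with Snell's law (\ref{2.3}), $x-\kappa m=\lambda\nu$.

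\textbf{Membership of the limit point.} First I check that $z_0\in E(m_0,b_0)$. The points of $E(m_k,b_k)$ satisfy $|z_k|=\frac{b_k}{1-\kappa m_k\cdot x_k}\ge \frac{b_k}{1-\kappa}$ with $x_k=z_k/|z_k|$. Since $b_k\to b_0>0$, the norms $|z_k|$ are bounded away from $0$, so $z_0\neq 0$ and $x_k\to x_0=z_0/|z_0|$. Passing to the limit in $|z_k|-\kappa m_k\cdot z_k=b_k$ gives $|z_0|-\kappa m_0\cdot z_0=b_0$. The closed constraint $x_k\cdot m_k\ge\kappa$ passes to the limit as well, giving $x_0\cdot m_0\ge\kappa$. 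Hence $z_0\in E(m_0,b_0)$.

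\textbf{Convergence of normals.} The map $(z,m)\mapsto \frac{z}{|z|}-\kappa m$ is continuous on $\{z\neq 0\}$, so $\frac{z_k}{|z_k|}-\kappa m_k\to \frac{z_0}{|z_0|}-\kappa m_0$. To normalize, I need the limit to be nonzero. Since $|\kappa|<1$,
$$\Bigl|\frac{z}{|z|}-\kappa m\Bigr|\ge 1-|\kappa|>0.$$
The normalization map is therefore continuous along the sequence, and $\nu_k(z_k)\to\nu(z_0)$.

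\textbf{Main obstacle.} The only delicate point is excluding $z_0=0$ together with the degeneracy of the gradient. Both are handled by the lower bound $b_k\ge c>0$ and by $|\kappa|<1$. In the hyperboloid case the corresponding bound would instead use (\ref{3.1}).
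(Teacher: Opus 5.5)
Your proof is correct and follows the paper's route: the paper gives no separate proof and defers to the argument of Lemma~\ref{lem3.5}, which writes the surface as $|z|-\kappa m\cdot z=b$, takes $z/|z|-\kappa m$ as the normal, and passes to the limit. You additionally verify three things the paper leaves implicit: that $z_0\in E(m_0,b_0)$, that $z_0\neq 0$ (using the implicit hypothesis $b_0>0$), and that $\bigl|z/|z|-\kappa m\bigr|\ge 1-|\kappa|>0$, which is what allows you to normalize the normal.
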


\begin{lemma}
  Assume that $\Gamma_{k} = \{\rho_{k}(x)x;~x\in \bar{\Omega}\}$, $k \geq 1$ is a sequence of refractors from $\bar{\Omega}$ to $\bar{\Omega}^{\ast}$. Suppose that $0<a_{1}\leq \rho_{k} \leq a_{2}$ and $\rho_{k} \rightarrow \rho$ uniformly on $\bar{\Omega}$. Then we have
  
   (a) $\Gamma := \{\rho(x)x;~x\in \bar{\Omega}\}$ is a refractor from $\bar{\Omega}$ to $\bar{\Omega}^{\ast}$;
  
   (b) For any compact set $K\subseteq \bar{\Omega}^{\ast}$,
  $$\varlimsup_{k\rightarrow \infty} \mathcal{T}_{\Gamma_{k}}(K) \subseteq \mathcal{T}_{\Gamma}(K);$$
  
   (c) For any open set $G \subseteq \bar{\Omega}^{\ast}$,
  $$\mathcal{T}_{\Gamma}(G) \subseteq \varliminf_{k\rightarrow \infty}\mathcal{T}_{\Gamma_{k}}(G) \cup E,$$
  where $E$ is the singular set of $\Gamma$.
  \label{lem4.6}
\end{lemma}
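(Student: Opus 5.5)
We follow the proof of Lemma \ref{lem3.6}, now with supporting semi-ellipsoids $E(m,b)$ from above instead of supporting semi-hyperboloids from below. The key preliminary fact replaces (\ref{3.1}). By (\ref{2.7}) we have $x\cdot m\ge \kappa+\varepsilon$ on $\bar\Omega\times\bar\Omega^{\ast}$, and $-1<\kappa<0$, so
\[
1+\kappa \le 1-\kappa\, m\cdot x\le 1-\kappa(\kappa+\varepsilon)\le 1-\kappa .
\]
Hence the denominators $1-\kappa m\cdot x$ are bounded above and away from zero, uniformly in $x\in\bar\Omega$ and $m\in\bar\Omega^{\ast}$.

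For (a), note first that $\rho$ is continuous as a uniform limit and satisfies $a_1\le\rho\le a_2$. Fix $x_0\in\bar\Omega$. For each $k$ choose $m_k\in\bar\Omega^{\ast}$ and $b_k>0$ such that $E(m_k,b_k)$ supports $\Gamma_k$ at $\rho_k(x_0)x_0$. Then $b_k=\rho_k(x_0)(1-\kappa m_k\cdot x_0)$, which gives
\[
a_1(1+\kappa)\le b_k\le a_2(1-\kappa).
\]
The lower bound is positive because $\kappa>-1$. By compactness of $\bar\Omega^{\ast}$ we pass to a subsequence with $m_k\to m_0\in\bar\Omega^{\ast}$ and $b_k\to b_0>0$. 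We then take limits in
\[
\rho_k(x_0)=\frac{b_k}{1-\kappa m_k\cdot x_0},\qquad \rho_k(x)\le \frac{b_k}{1-\kappa m_k\cdot x}\quad\text{for all } x\in\bar\Omega .
\]
Uniform convergence of $\rho_k$, together with continuity of the right-hand sides in $(m,b)$, gives the same relations for $\rho$, $m_0$, $b_0$. So $E(m_0,b_0)$ supports $\Gamma$ at $\rho(x_0)x_0$, and $\Gamma$ is a refractor. This step is a direct transcription of the earlier proof, with the inequalities reversed.

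For (b), let $x_0\in\varlimsup_k\mathcal{T}_{\Gamma_k}(K)$. Passing to a subsequence, we may assume $x_0\in\mathcal{T}_{\Gamma_k}(K)$ for every $k$, so there are $m_k\in\mathcal{N}_{\Gamma_k}(x_0)\cap K$ and $b_k>0$. The bounds on $b_k$ from (a) apply, so a further subsequence gives $m_k\to m_0$ and $b_k\to b_0>0$. Here $m_0\in K$ because $K$ is closed. The limit argument of (a) shows that $E(m_0,b_0)$ supports $\Gamma$ at $\rho(x_0)x_0$, hence $x_0\in\mathcal{T}_\Gamma(m_0)\subseteq\mathcal{T}_\Gamma(K)$.

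For (c) we argue by complements, using Lemma \ref{lem4.4}(a) and part (b) applied to the compact set $G^c=\bar\Omega^{\ast}\setminus G$. Let $x_0\in\mathcal{T}_\Gamma(G)\setminus E$. Since $x_0$ is not a singular point, Remark \ref{rem4.2} says $\Gamma$ has exactly one supporting semi-ellipsoid at $\rho(x_0)x_0$. Hence $\mathcal{N}_\Gamma(x_0)$ is a single point $m\in G$, and so $x_0\notin\mathcal{T}_\Gamma(G^c)$. Suppose $x_0\notin\varliminf_k\mathcal{T}_{\Gamma_k}(G)$. Then $x_0\notin\mathcal{T}_{\Gamma_k}(G)$ for infinitely many $k$. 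By Lemma \ref{lem4.4}(a), every such $k$ satisfies $x_0\in\mathcal{T}_{\Gamma_k}(G^c)$, because $\mathcal{N}_{\Gamma_k}(x_0)$ is nonempty, as $\Gamma_k$ is a refractor. Therefore $x_0\in\varlimsup_k\mathcal{T}_{\Gamma_k}(G^c)$, and part (b) gives $x_0\in\mathcal{T}_\Gamma(G^c)$, a contradiction. This yields the claimed inclusion.

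The main point needing care is the uniqueness statement just used: a non-singular point has exactly one supporting ellipsoid. I would justify it as in Lemma \ref{lem3.5}/\ref{lem4.5}. At a differentiability point of $\rho$, the normal of the supporting ellipsoid equals the normal of $\Gamma$, and the normal is $x_0/|x_0|-\kappa m$ by the analogue of Lemma \ref{lem3.5}. Since $\kappa\neq0$, this normal determines $m$, and then $b$ is determined by $b=\rho(x_0)(1-\kappa m\cdot x_0)$. The second point needing care is the positive lower bound on $b_k$, which is exactly where $\kappa>-1$ is needed.
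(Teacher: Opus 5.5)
Your proof is correct and follows the paper's route. The paper proves this lemma by transcribing the proof of Lemma~\ref{lem3.6}: bound $b_k$ above and away from zero, extract convergent subsequences of $(m_k,b_k)$, and pass to the limit in the support relations for (a) and (b). It then gets (c) from (b) applied to the compact set $G^c$, together with Lemma~\ref{lem4.4}(a) and the (implicit) inclusion $\mathcal{T}_\Gamma(G)\cap\mathcal{T}_\Gamma(G^c)\subseteq E$; your pointwise contradiction argument is simply a cleaner write-up of that complement argument.

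Two small fixes are needed. First, the middle of your displayed chain is reversed and should read $1+\kappa\le 1-\kappa(\kappa+\varepsilon)\le 1-\kappa\, m\cdot x\le 1-\kappa$; only the outer bounds are used, so nothing breaks. Second, in your uniqueness justification, tangency only makes $x_0-\kappa m$ and $x_0-\kappa m'$ parallel. For a fixed unit normal $\nu$ this still allows two vectors $m=(x_0-\lambda\nu)/\kappa$, one for each root of $|x_0-\lambda\nu|=|\kappa|$. It is the constraint $x_0\cdot m\ge\kappa$ from (\ref{2.7}), i.e.\ the choice of the $+$ root in $\Phi$, rather than merely $\kappa\neq0$, that forces $m=m'$.
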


\subsection{Properties of Fresnel coefficients}

Recall again the Fresnel coefficients in (\ref{2.16}) and (\ref{2.17}),
\begin{equation*}
  \begin{cases}
    r_{\Gamma}(x) \!\!\!\!&= \left[\dfrac{z_{2} + \kappa z_{1} - (z_{1} + \kappa z_{2}) x \cdot m}{z_{2} - \kappa z_{1} + (z_{1} - \kappa z_{2}) x \cdot m} \right]^{2} \dfrac{A_{\parallel}^{2}}{A_{\parallel}^{2} + A_{\bot}^{2}} \\
    \!\!\!\!&\quad+ \left[ \dfrac{z_{1} + \kappa z_{2} - (z_{2} + \kappa z_{1}) x \cdot m}{z_{1} - \kappa z_{2} + (z_{2} - \kappa z_{1}) x \cdot m}\right]^{2} \dfrac{A_{\bot}^{2}}{A_{\parallel}^{2} + A_{\bot}^{2}},  \\
    t_{\Gamma}(x) \!\!\!\!&= 1 - r_{\Gamma}(x).
  \end{cases}
\end{equation*}
We can also discuss the boundedness of $r_{\Gamma}(x)$ and $t_{\Gamma}(x)$.

\begin{proposition}
  Suppose that $\bar{\Omega}$ and $\bar{\Omega}^{\ast}$ satisfy (\ref{2.7}), $\Gamma$ is a refractor from $\bar{\Omega}$ to $\bar{\Omega}^{\ast}$, then there exists a constant $C_{\varepsilon}$ associated with $\varepsilon$, such that $r_{\Gamma}(x) \leq C_{\varepsilon}$ and $C_{\varepsilon} < t_{\Gamma}(x) <1$.
  \label{prop4.1}
\end{proposition}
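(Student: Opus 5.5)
We follow the scheme used for Proposition \ref{prop3.1}. By (\ref{2.16}) and (\ref{3.5}), $r_{\Gamma}(x)=\psi(x\cdot m)$, where $m\in\mathcal{N}_{\Gamma}(x)$ and $\alpha+\beta=1$, $\alpha,\beta\ge 0$. Condition (\ref{2.7}) confines $t=x\cdot m$ to the compact interval $I_{\varepsilon}=[\kappa+\varepsilon,1]$. It therefore suffices to show that $p^{2}$ and $q^{2}$ are bounded on $I_{\varepsilon}$ by a constant strictly less than $1$. Set $C_{\varepsilon}:=\max\{\max_{I_{\varepsilon}}p^{2},\max_{I_{\varepsilon}}q^{2}\}$. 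Then $r_{\Gamma}(x)\le \alpha C_{\varepsilon}+\beta C_{\varepsilon}=C_{\varepsilon}$, and $t_{\Gamma}=1-r_{\Gamma}\ge 1-C_{\varepsilon}$. Note that, as in Proposition \ref{prop3.1}, the statement should be read as $r_\Gamma\le C_\varepsilon$ and $t_\Gamma\ge 1-C_\varepsilon>0$. The upper bound $t_{\Gamma}\le 1$ is immediate from $r_{\Gamma}\ge 0$, and it is strict whenever $r_\Gamma\neq0$.

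First we check that the denominators do not vanish on $I_{\varepsilon}$. With $\sigma>0$ and $-1<\kappa<0$, we have $\sigma-\kappa>0$ and $1-\kappa\sigma>0$. The denominator of $p$ is $\sigma-\kappa+(1-\kappa\sigma)t$; it is affine in $t$, and at $t=\kappa$ it equals $\sigma(1-\kappa^{2})>0$. Since it is positive at $t=\kappa$ and at $t=1$, it is positive on the whole of $[\kappa,1]\supset I_{\varepsilon}$. The same argument applies to the denominator $1-\kappa\sigma+(\sigma-\kappa)t$ of $q$, which equals $1-\kappa^{2}>0$ at $t=\kappa$. Hence $p$ and $q$ are smooth on $I_\varepsilon$.

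Next comes monotonicity. The formulas $p'(t)=\frac{2\sigma(\kappa^{2}-1)}{[\cdot]^{2}}$ and $q'(t)=\frac{2\sigma(\kappa^2-1)}{[\cdot]^2}$ are now negative, because $\kappa^{2}<1$. So $p$ and $q$ are decreasing, and $\max_{I_\varepsilon}p^{2}=\max\{p^{2}(\kappa+\varepsilon),p^{2}(1)\}$, with the analogous formula for $q$.

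The key step, and the main obstacle, is to show that these endpoint values are strictly less than $1$ and to identify where the constant depends on $\varepsilon$. At $t=1$ we get $p^{2}(1)=q^{2}(1)=\big(\frac{\sigma-1}{\sigma+1}\big)^{2}<1$, since $\sigma>0$. At $t=\kappa$ we get $p(\kappa)=\frac{\sigma(1-\kappa^{2})}{\sigma(1-\kappa^{2})}=1$ and $q(\kappa)=\frac{1-\kappa^{2}}{1-\kappa^{2}}=1$. This is the grazing limit, which corresponds to total reflection. Because $p$ and $q$ are strictly decreasing, $p(\kappa+\varepsilon)<1$ and $q(\kappa+\varepsilon)<1$. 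We must also rule out $p,q\le -1$ on $I_{\varepsilon}$. This follows because $p$ and $q$ decrease to their values at $t=1$, which lie in $(-1,1)$. Consequently $|p|,|q|<1$ on $I_{\varepsilon}$, and $C_{\varepsilon}<1$ depends only on $\varepsilon,\kappa,\sigma$. For $\varepsilon$ small, the maximum is attained at $t=\kappa+\varepsilon$, and we would write these values out explicitly as in Section \ref{Section 3}, e.g.\ $p(\kappa+\varepsilon)=\frac{\sigma(1-\kappa^{2})-\varepsilon(1+\kappa\sigma)}{\sigma(1-\kappa^{2})+\varepsilon(1-\kappa\sigma)}$. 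This completes the plan.
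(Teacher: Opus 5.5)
Your proposal is correct and is essentially the paper's argument. In both, $p$ and $q$ are decreasing on $[\kappa+\varepsilon,1]$ because $p'$ and $q'$ share the numerator $2\sigma(\kappa^{2}-1)<0$, so $\max p^{2}$ and $\max q^{2}$ are attained at the endpoints $t=\kappa+\varepsilon$ and $t=1$. Your extra checks fill in what the paper leaves implicit: positive denominators on $[\kappa,1]$; $p(\kappa)=q(\kappa)=1$, which gives $C_{\varepsilon}<1$; $p(1),q(1)\in(-1,1)$; and reading the conclusion as $t_{\Gamma}\geq 1-C_{\varepsilon}$, with strict $t_{\Gamma}<1$ only when $r_{\Gamma}\neq 0$. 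This is exactly the form the paper needs later, when it uses $\frac{1}{1-C_{\varepsilon}}$ in Theorem \ref{thm4.2}.
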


\begin{proof}
  Similar to the discussion in section 3.2, we introduce a function
  \begin{equation}\label{4.3}
    \psi(t) := \left[\frac{\sigma + \kappa -(1 + \kappa \sigma)t}{\sigma - \kappa + (1 - \kappa \sigma)t}\right]^{2} \alpha + \left[\frac{1 + \kappa \sigma - (\sigma + \kappa)t}{1 - \kappa \sigma + (\sigma - \kappa)t}\right]^{2} \beta,
  \end{equation}
  where $\sigma = \displaystyle \frac{z_{2}}{z_{1}} = \displaystyle \sqrt{\frac{\mu_{2}\epsilon_{1}}{\mu_{1}\epsilon_{2}}}$, $\alpha = \dfrac{A_{\parallel}^{2}}{A_{\parallel}^{2} + A_{\bot}^{2}}$ and $\beta = \dfrac{A_{\bot}^{2}}{A_{\parallel}^{2} + A_{\bot}^{2}}$. Then $r_{\Gamma}(x) = \psi(x \cdot m)$, $t_{\Gamma}(x) = 1- \psi(x \cdot m)$. From (\ref{2.7}), we know $t \in [\kappa + \varepsilon, 1]$. We denote
  $$p(t) = \frac{\sigma + \kappa -(1 + \kappa \sigma)t}{\sigma - \kappa + (1 - \kappa \sigma)t} \quad \text{and} \quad q(t) = \frac{1 + \kappa \sigma - (\sigma + \kappa)t}{1 - \kappa \sigma + (\sigma - \kappa)t}.$$
  
  For $p(t)$, we have $p'(t) = \displaystyle \frac{2 \sigma (\kappa^{2} - 1)}{[\sigma - \kappa + (1 - \kappa \sigma)t]^{2}}$.  For $-1< \kappa <0$, then $\kappa^{2} - 1 <0$, so $p(t)$ decreases on $[\kappa + \varepsilon,1]$. Hence 
  $$p^{2}(t)_{\max} = \max\{p^{2}(\kappa + \varepsilon), p^{2}(1)\}.$$
  We have 
  $$p^{2}(1) = \left[\dfrac{\sigma - 1}{\sigma + 1}\right]^{2},~~p^{2}(\kappa + \varepsilon) = \left[\dfrac{\sigma(1 - \kappa^{2}) - \varepsilon (1 + \kappa \sigma)}{\sigma(1 - \kappa^{2}) + \varepsilon (1 - \kappa \sigma)}\right]^{2}.$$
   For $\varepsilon$ is small enough, then $p^{2}(t)_{\max} = p^{2}(\kappa + \varepsilon)$.
  
  For $q(t)$, we have $q'(t) = \dfrac{2 \sigma (\kappa^{2} - 1)}{[1 - \kappa \sigma + (\sigma - \kappa)t]^{2}}$. For $-1< \kappa <0$, then $\kappa^{2} - 1 <0$, then $q(t)$ decreases on $[\kappa + \varepsilon,1]$. Hence 
  $$q^{2}(t)_{\max} = \max\{q^{2}(\kappa + \varepsilon), q^{2}(1)\}.$$
  We have $q^{2}(1) = \left[-\dfrac{\sigma - 1}{\sigma + 1}\right]^{2}$, $q^{2}(\kappa + \varepsilon) = \left[\dfrac{1 - \kappa^{2} - \varepsilon(\sigma + \kappa)}{1 - \kappa^{2} + \varepsilon(\sigma - \kappa)}\right]^{2}$. For $\varepsilon$ is small enough, then $q^{2}(t)_{\max} = q^{2}(\kappa + \varepsilon)$.
  
  From above analysis, we can get there exists a constant $C_{\varepsilon}$ associated with $\varepsilon$, such that $r_{\Gamma}(x) \leq C_{\varepsilon}$, and for $t_{\Gamma}(x) = 1 - r_{\Gamma}(x)$, then $C_{\varepsilon} < t_{\Gamma}(x) <1$.
\end{proof}

 We can also get the continuity of $t_{\Gamma}(x)$.

 \begin{proposition}
   Suppose that $\Gamma = \{\rho(x)x;~x\in \bar{\Omega}\}$ is refractor from $\bar{\Omega}$ to $\bar{\Omega}^{\ast}$ and $E$ is the singular set of $\Gamma$, then $t_{\Gamma}(x)$ is continuous on $\bar{\Omega}\backslash E$.
   \label{prop4.2}
 \end{proposition}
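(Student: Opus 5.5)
We follow the proof of Proposition \ref{prop3.2}, replacing supporting semi-hyperboloids by supporting semi-ellipsoids and (\ref{3.1}) by (\ref{2.7}). Since $t_{\Gamma}=1-r_{\Gamma}$, it suffices to prove continuity of $r_{\Gamma}$ on $\bar{\Omega}\backslash E$. By (\ref{2.16}) and Remark \ref{rem2.3}, for $x\notin E$ we have $r_{\Gamma}(x)=G(x,\nu(x))$, where $\nu(x)$ is the unit normal to $\Gamma$ at $\rho(x)x$. Equivalently, by the Snell law (\ref{2.3}), $r_{\Gamma}(x)=\psi(x\cdot m(x))$, where $m(x)$ is the unique element of $\mathcal{N}_{\Gamma}(x)$. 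The function $G$ (or $\psi$) is continuous on the relevant compact set, because by (\ref{2.7}) the denominators in (\ref{4.3}) stay away from zero for $t\in[\kappa+\varepsilon,1]$. We therefore only need: if $x_k\to x_0$ with $x_k,x_0\in\bar{\Omega}\backslash E$, then $\nu(x_k)\to\nu(x_0)$.

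First we record two-sided bounds on $\rho$. By Theorem~\ref{thm3.2} (whose proof is identical here) we may rescale, and by continuity and positivity of $\rho$ there are $0<C_1\le\rho\le C_2$ on $\bar{\Omega}$. Let $E(m_k,b_k)$ support $\Gamma$ at $\rho(x_k)x_k$. Then $b_k=\rho(x_k)(1-\kappa m_k\cdot x_k)$, and since $-1<\kappa<0$ gives $1-\kappa m\cdot x\in[1+\kappa,\,1-\kappa]$, we get
$$C_1(1+\kappa)\le b_k\le C_2(1-\kappa).$$
Since $\bar{\Omega}^{\ast}$ is compact, any subsequence has a further subsequence with $m_{k_j}\to m_0\in\bar{\Omega}^{\ast}$ and $b_{k_j}\to b_0>0$. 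Passing to the limit in $\rho(x)\le b_{k_j}/(1-\kappa m_{k_j}\cdot x)$ and $\rho(x_{k_j})=b_{k_j}/(1-\kappa m_{k_j}\cdot x_{k_j})$, using continuity of $\rho$ (Lemma \ref{lem4.1}), shows that $E(m_0,b_0)$ supports $\Gamma$ at $\rho(x_0)x_0$.

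Since $x_0\notin E$, Remark \ref{rem4.2} says the supporting ellipsoid at $x_0$ is unique. Hence $(m_0,b_0)$ is independent of the subsequence, and the whole sequence satisfies $m_k\to m_0$, $b_k\to b_0$. By Lemma \ref{lem4.5}, the normal to $E(m_k,b_k)$ at $\rho(x_k)x_k$, namely $\frac{x_k-\kappa m_k}{|x_k-\kappa m_k|}$, converges to the normal to $E(m_0,b_0)$ at $\rho(x_0)x_0$. At points of $\bar{\Omega}\backslash E$ these normals coincide with $\nu(x_k)$ and $\nu(x_0)$. Consequently $r_{\Gamma}(x_k)=\psi(x_k\cdot m_k)\to\psi(x_0\cdot m_0)=r_{\Gamma}(x_0)$. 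This gives upper and lower semicontinuity simultaneously, so $t_{\Gamma}$ is continuous on $\bar{\Omega}\backslash E$.

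The main obstacle is the identification step: it needs that a non-singular point has a unique supporting ellipsoid, so that the subsequential limit does not depend on the subsequence and the full sequence converges. It also needs the lower bound $b_k\ge C_1(1+\kappa)>0$, so that the limit ellipsoid is nondegenerate. Both rely on $-1<\kappa<0$; for the lower bound this is simpler than in the $\kappa<-1$ case, since $1-\kappa m\cdot x\ge1+\kappa>0$ holds automatically.
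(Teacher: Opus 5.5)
Your proposal is correct and takes essentially the paper's route. The paper proves this by transplanting the argument of Proposition~\ref{prop3.2}: bound $b_k$ using two-sided bounds on $\rho$, extract convergent $(m_k,b_k)$, check that the limit ellipsoid supports $\Gamma$ at $\rho(x_0)x_0$, and pass the normals to the limit via Lemma~\ref{lem4.5}. Your explicit use of uniqueness of the supporting ellipsoid at non-singular points (Remark~\ref{rem4.2}) to upgrade subsequential convergence to full convergence just makes precise what the paper's semicontinuity phrasing leaves implicit.
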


 The following lemma and theorem are similar as those in Section 3.2, which are useful in proving the existence of the weak solution.

 \begin{lemma}
   Let $\Gamma_{k}$ and $\Gamma$ be refractors with defining functions $\rho_{k}(x)$ and $\rho(x)$, the corresponding fresnel coefficients are $t_{k}$ and $t$. Suppose that $\rho_{k}\rightarrow \rho$ pointwise in $\bar{\Omega}$ and there exist constants $C_{1},C_{2}>0$, such that $C_{1} \leq \rho_{k}(x) \leq C_{2}$ in $\bar{\Omega}$. Then for $y \notin E$, there exists subsequence $t_{k_{j}}(y)\rightarrow t(y)$ as $j\rightarrow \infty$, where $E$ is the union of singular points of refractors $\Gamma_{k}$ and $\Gamma$.
   \label{lem4.7}
 \end{lemma}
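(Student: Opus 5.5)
The plan is to mirror the proof of Lemma \ref{lem3.7}, replacing supporting hyperboloids by supporting ellipsoids and the bound (\ref{3.1}) by its analogue for $-1<\kappa<0$. From (\ref{2.7}) we have $x\cdot m\geq \kappa+\varepsilon$ for $x\in\bar{\Omega}$, $m\in\bar{\Omega}^{\ast}$. Since $\kappa<0$, this gives $1-\kappa m\cdot x\geq 1-\kappa(\kappa+\varepsilon)=1-\kappa^{2}-\kappa\varepsilon>0$ and $1-\kappa m\cdot x\leq 1-\kappa$. So denominators stay uniformly away from $0$ and $\infty$.

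Fix $y\notin E$. For each $k$, Definition \ref{def4.1} provides $m_{k}\in\bar{\Omega}^{\ast}$ and $b_{k}>0$ such that $E(m_{k},b_{k})$ supports $\Gamma_{k}$ at $\rho_{k}(y)y$. That is, $\rho_{k}(y)=\frac{b_{k}}{1-\kappa m_{k}\cdot y}$ and $\rho_{k}(z)\leq \frac{b_{k}}{1-\kappa m_{k}\cdot z}$ for all $z\in\bar{\Omega}$. From $C_{1}\leq\rho_{k}(y)\leq C_{2}$ and the two-sided bound on the denominator we get
\[
C_{1}(1-\kappa^{2}-\kappa\varepsilon)\leq b_{k}\leq C_{2}(1-\kappa).
\]
By compactness of $\bar{\Omega}^{\ast}$ we extract a subsequence with $m_{k_{j}}\to m\in\bar{\Omega}^{\ast}$ and $b_{k_{j}}\to b>0$. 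Passing to the limit, using pointwise convergence $\rho_{k}\to\rho$ at $y$ and at every $z$, gives $\rho(y)=\frac{b}{1-\kappa m\cdot y}$ and $\rho(z)\leq\frac{b}{1-\kappa m\cdot z}$. Hence $E(m,b)$ supports $\Gamma$ at $\rho(y)y$, and $m\in\mathcal{N}_{\Gamma}(y)$.

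Since $y\notin E$, each $\Gamma_{k}$ and $\Gamma$ has a unique supporting ellipsoid at $y$ (Remark \ref{rem4.2}). Therefore the normal to $\Gamma_{k_{j}}$ at $\rho_{k_{j}}(y)y$ equals the normal to $E(m_{k_{j}},b_{k_{j}})$ there, and likewise for $\Gamma$ and $E(m,b)$. Moreover $\mathcal{N}_{\Gamma_{k_j}}(y)=\{m_{k_j}\}$ and $\mathcal{N}_{\Gamma}(y)=\{m\}$. Since $\rho_{k_{j}}(y)y\to\rho(y)y$, Lemma \ref{lem4.5} gives convergence of the normals $\nu_{k_{j}}(y)\to\nu(y)$; explicitly, $\nu=\frac{z}{|z|}-\kappa m$ up to normalization.

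Finally, the Fresnel coefficient is $t_{k}(y)=1-\psi(y\cdot m_{k})$, with $\psi$ from (\ref{4.3}), equivalently a continuous function of $(y,\nu)$ by Remark \ref{rem2.3}. The denominators in $\psi$ do not vanish on $[\kappa+\varepsilon,1]$, because $p,q$ are monotone there as computed in Proposition \ref{prop4.1}. So $\psi$ is continuous, and $t_{k_{j}}(y)=1-\psi(y\cdot m_{k_{j}})\to 1-\psi(y\cdot m)=t(y)$.

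The main obstacle is the identification step: the limit $m$ must be \emph{the} refracted direction of $\Gamma$ at $y$, so that $t(y)$ is really $1-\psi(y\cdot m)$. This is where $y\notin E$ is essential, since it gives uniqueness of the supporting ellipsoid. The lower bound on $b_{k}$ needs $1-\kappa^{2}-\kappa\varepsilon>0$, which holds automatically for $-1<\kappa<0$.
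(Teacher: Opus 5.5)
Your proposal is correct and follows the paper's argument, which is the proof of Lemma~\ref{lem3.7} carried over to supporting ellipsoids: bound $b_k$ via the two-sided bound on $1-\kappa m\cdot x$, extract convergent $(m_{k_j},b_{k_j})$, pass to the limit to get a supporting ellipsoid of $\Gamma$ at $y$, and use $y\notin E$ to identify it as the unique one, so that the refracted directions (equivalently the normals), and hence the Fresnel coefficients, converge. One small fix: nonvanishing of the denominators of $\psi$ does not follow from the monotonicity of $p,q$, since that derivative computation already assumes it; check it directly instead, because on $[\kappa+\varepsilon,1]$ the denominators are increasing in $t$ and so are bounded below by $\sigma(1-\kappa^{2})+\varepsilon(1-\kappa\sigma)>0$ and $1-\kappa^{2}+\varepsilon(\sigma-\kappa)>0$.
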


 \begin{theorem}
   Assume the hypotheses and notations of Lemma \ref{lem4.7} hold, and let $F \subseteq \bar{\Omega}^{\ast}$ be a compact set, set $F_{k} = \mathcal{T}_{\Gamma_{k}}(F)$. Then for all $y \notin E$, we have
  \begin{equation}\label{4.4}
    (a) \varlimsup_{k \rightarrow \infty} \chi_{F_{k}}(y)t_{k}(y) = t(y)\varlimsup_{k \rightarrow \infty} \chi_{F_{k}}(y);
  \end{equation}
  \begin{equation}\label{4.5}
    (b) \varliminf_{k \rightarrow \infty} \chi_{F_{k}}(y)t_{k}(y) = t(y)\varliminf_{k \rightarrow \infty} \chi_{F_{k}}(y).
  \end{equation}
   \label{thm4.1}
 \end{theorem}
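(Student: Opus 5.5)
The plan is to prove the two identities (\ref{4.4}) and (\ref{4.5}) pointwise, fixing $y \notin E$ and using only Lemma \ref{lem4.7}, Proposition \ref{prop4.1} and the closedness statement Lemma \ref{lem4.6}(b). This mirrors Theorem \ref{thm3.1}, i.e. Theorem 5.5 in \cite{GM13}. Since $\chi_{F_k}(y)\in\{0,1\}$ and $0<t_k(y)<1$ by Proposition \ref{prop4.1}, the statement concerns only those indices $k$ with $y\in F_k$. I will argue by cases on the value of $\varlimsup_k \chi_{F_k}(y)$ (respectively $\varliminf_k \chi_{F_k}(y)$).

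For (a), the first case is $\varlimsup_k\chi_{F_k}(y)=0$. Then $y\notin F_k$ for all large $k$, so both sides vanish. Otherwise $y\in F_k$ for infinitely many $k$, and the right side equals $t(y)$. I will show that along \emph{every} subsequence $k_j$ with $y\in F_{k_j}$ one has $t_{k_j}(y)\to t(y)$. Since $\chi_{F_k}(y)t_k(y)$ equals $t_k(y)$ on that set of indices and $0$ elsewhere, its $\varlimsup$ is then exactly $t(y)$.

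The convergence claim comes from Lemma \ref{lem4.7} applied to the subsequence $\Gamma_{k_j}$. Lemma \ref{lem4.7} only yields a further subsequence along which $t\to t(y)$. Applying it to every sub-subsequence shows that every subsequence of $t_{k_j}(y)$ has a further subsequence converging to $t(y)$, which forces convergence of the whole sequence $t_{k_j}(y)$. The key fact making the limit independent of the subsequence is that $y\notin E$. The supporting ellipsoids $E(m_{k},b_{k})$ at $\rho_k(y)y$ have bounded parameters, by the same bounds as in Lemma \ref{lem4.2} with $1-\kappa m\cdot x\ge 1+\kappa>0$. Any limit $E(m,b)$ supports $\Gamma$ at $\rho(y)y$, by the argument of Lemma \ref{lem4.6}(a). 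Because $y$ is a regular point of $\Gamma$, Remark \ref{rem4.2} forces the limit ellipsoid to be unique, so the normals converge to $\nu(y)$ by Lemma \ref{lem4.5}. Continuity of the Fresnel expression $G(x,\nu)$ then gives $t_{k}(y)\to t(y)$.

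For (b), if $y\notin F_k$ for infinitely many $k$, both sides are $0$. Otherwise $y\in F_k$ for all large $k$, so the left side is $\varliminf_k t_k(y)$ and the right side is $t(y)$. The full-sequence convergence established above then gives equality. Compactness of $F$ is used via Lemma \ref{lem4.6}(b): it ensures that the limiting direction $m$ lies in $F$, so $y\in\mathcal{T}_\Gamma(F)$. This keeps the statement consistent with the refractor measure on compact sets.

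The main obstacle is upgrading the subsequential convergence of Lemma \ref{lem4.7} to convergence along the whole relevant subsequence. The uniqueness of the supporting ellipsoid at regular points (Remark \ref{rem4.2}) carries that step, so I will spell it out carefully. In particular I will check that $y\notin E$, with $E$ including the singular points of all $\Gamma_k$, means each $t_k(y)$ is well defined.
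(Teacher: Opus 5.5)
Your proof is correct and follows essentially the same route as the paper. The paper gives no separate proof here and defers, through Theorem \ref{thm3.1}, to Theorem 5.5 of \cite{GM13}; that argument is the same pointwise one: split on whether $y\in F_k$ for infinitely many $k$ or for all large $k$, and use the convergence $t_k(y)\to t(y)$ at $y\notin E$, which you correctly obtain by applying Lemma \ref{lem4.7} to every subsequence. One small correction: compactness of $F$ plays no role in these identities, which hold for arbitrary sets $F_k$, and your appeal to Lemma \ref{lem4.6}(b) is also unnecessary, since that lemma assumes uniform rather than pointwise convergence.
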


\subsection{Weak solution of the far field refraction problem for the case $-1< \kappa <0$ with loss of energy}

In this subsection, we will give the definition of refractor measure and the weak solution of the far field refraction problem for the case $-1< \kappa <0$ with loss of energy. The definition of refractor measure is same as Definition \ref{def3.3}.

\begin{definition}
  Suppose $\Gamma$ is a refractor from $\bar{\Omega}$ to $\bar{\Omega}^{\ast}$, $f \in L^{1}(\bar{\Omega})$ and $\inf\limits_{\bar{\Omega}}f >0$. The refractor measure associated with $\Gamma$ and $f$ is defined by a set function on Borel subsets of $\bar{\Omega}^{\ast}$:
  \begin{equation}\label{4.6}
    G_{\Gamma}(F) := \int_{\mathcal{T}_{\Gamma}(F)}f(x)t_{\Gamma}(x)dx,
  \end{equation}
  where $dx$ is the surface measure on $S^{n-1}$.
  \label{def4.3}
\end{definition}

Based on the definition of refractor measure, we can define the weak solution of the far field refraction problem for the case $-1< \kappa <0$ with loss of energy.

\begin{definition}
  Suppose that $\mu$ is a Radon measure on the Borel subset of $\bar{\Omega}^{\ast}$ and $f \in L^{1}(\bar{\Omega})$, a refractor $\Gamma$ is a weak solution of the far field refraction problem for the case $-1 <\kappa <0$ with emitting illumination intensity $f(x)$ and prescribe refracted illumination intensity $\mu$ if for any Borel set $\omega \subseteq \bar{\Omega}^{\ast}$, there holds:
  \begin{equation}\label{4.7}
    G_{\Gamma}(\omega) = \int_{\mathcal{T}_{\Gamma}(\omega)}f(x)t_{\Gamma}(x)dx \geq \mu(\omega).
  \end{equation}
  \label{def4.4}
\end{definition}

\begin{remark}
    Similar to Remark \ref{rem3.4}, in order to ensure that light can be refracted into $\bar{\Omega}^{\ast}$, we use ``$\geq$'' in (\ref{4.7}).
  \label{rem4.3}
\end{remark}

\begin{remark}
  From Definition \ref{def4.4}, we can get that if $\Gamma$ is a weak solution of the refraction problem, then for any $c>0$, $c\Gamma$ is also a weak solution of the refraction problem, namely the weak solution is unique up to a multiplicative constant.
  \label{rem4.4}
\end{remark}

The existence of the weak solution is discussed in the following two subsections.

\subsection{Existence of the weak solution when $\mu$ is discrete measure}

In this subsection, we assume that $\mu$ equals finite sum of $\delta$-measures, hence all rays are refracted into finite directions. Based on this assumption, we establish the existence of the weak solution of the far field refraction problem for the case $-1 < \kappa <0$ with loss of energy when $\mu$ is discrete measure.

\begin{remark}
  Suppose that $m_{1},m_{2},\ldots, m_{l},~l \geq 2$ are discrete points in $\bar{\Omega}^{\ast}$, then for $\mathbf{b} = (b_{1},b_{2},\cdots ,b_{l}) \in \mathbb{R}^{l},~b_{i}>0$, the refractor is defined as
  \begin{equation}\label{4.8}
    \Gamma(\mathbf{b}) = \{\rho(x)x;~x\in \bar{\Omega},~\rho(x)=\min_{1\leq i \leq l}\frac{b_{i}}{1 - \kappa m_{i}\cdot x}\}.
  \end{equation}
  \label{rem4.5}
\end{remark}

Now we show the existence of the weak solution when $\mu$ equals the linear combination of the $\delta$-measures at $m_{1},m_{2},\ldots, m_{l}$. 

\begin{theorem}
  Suppose that $f \in L^{1}(\bar{\Omega})$ and $\inf\limits_{x \in \bar{\Omega}}f(x)>0$, $m_{1},m_{2},\ldots, m_{l},~l \geq 2$ are discrete points in $\bar{\Omega}^{\ast}$, $g_{1}, g_{2}, \ldots, g_{l}>0$. Let $\mu$ be the Borel measure defined on $\bar{\Omega}^{\ast}$ by $\mu = \sum\limits_{i = 1}^{l}g_{i}\delta_{m_{i}}(\omega)$, where $\omega \in \bar{\Omega}^{\ast}$ is Borel set. Also suppose that $$\displaystyle\int_{\bar{\Omega}}f(x)dx \geq \dfrac{1}{1 - C_{\varepsilon}} \mu(\bar{\Omega}^{\ast}),$$ where $C_{\varepsilon}$ is defined in Proposition \ref{prop4.1}. Then there exist $\mathbf{b}_{0} \in \mathbb{R}^{l}$ and refractor $\Gamma(\mathbf{b}_{0})$, such that
  $$\int_{\mathcal{T}_{\Gamma{(\mathbf{b_{0}})}(m_{i})}}f(x)t_{\Gamma(\mathbf{b}_{0})}(x)dx = g_{i}$$
  for $i = 2,\ldots,l$, and
  $$\int_{\mathcal{T}_{\Gamma{(\mathbf{b_{0}})}(m_{1})}}f(x)t_{\Gamma(\mathbf{b}_{0})}(x)dx > g_{1},$$
  namely the weak solution of the far field refraction problem for the case $-1 < \kappa <0$ with loss of energy exists.
  \label{thm4.2}
\end{theorem}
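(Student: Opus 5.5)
The plan is to repeat the Minkowski-type argument behind Theorem \ref{thm3.3}, adapted to the ellipsoid geometry. The refractor is now $\Gamma(\mathbf{b})$ from (\ref{4.8}), defined as a \emph{minimum} of the semi-ellipsoids, so every monotonicity in $b_i$ is reversed. Increasing $b_i$ now \emph{shrinks} $\mathcal{T}_{\Gamma(\mathbf{b})}(m_i)$, while decreasing $b_i$ enlarges it. Normalize $b_1=1$ and set
\[
W:=\{\mathbf{b}=(1,b_2,\ldots,b_l);\ b_i>0,\ G_{\Gamma(\mathbf{b})}(m_i)\le g_i,\ i=2,\ldots,l\}.
$$
We will extremize $d(\mathbf{b})=\sum_i b_i$ over a compact piece of $W$. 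Because the monotonicity is reversed, we \emph{minimize} rather than maximize.

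First, prove the analogue of Lemma \ref{lem3.8}.

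(a) $W\neq\emptyset$. Take each $b_i$, $i\ge2$, large, say $b_i>\frac{1-\kappa}{1-\kappa^2}\cdot(1-\kappa)/(\kappa\varepsilon)$-type bounds built from $1-\kappa m\cdot x\in[1-\kappa\cdot(\text{something}),1-\kappa]$ and $1-\kappa m\cdot x\ge 1-\kappa>0$. With such $b_i$, the ellipsoid $E(m_1,1)$ lies strictly below $E(m_i,b_i)$ on $\bar\Omega$. Hence $\mathcal{T}_{\Gamma(\mathbf{b})}(m_i)\subseteq E$, which is a null set by Lemma \ref{lem4.1}, so $G_{\Gamma(\mathbf{b})}(m_i)=0<g_i$.

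(b) A lower bound. For $\mathbf{b}\in W$, the energy identity
\[
\sum_i G_{\Gamma(\mathbf{b})}(m_i)=\int_{\bar\Omega}f\,t_{\Gamma(\mathbf{b})}\ge(1-C_\varepsilon)\int f\ge\sum g_i
\]
uses Proposition \ref{prop4.1} and the fact that the sets $\mathcal{T}(m_i)$ overlap only on $E$. It gives $G_{\Gamma(\mathbf{b})}(m_1)\ge g_1>0$. Therefore there is a regular point $x_0$ where $E(m_1,1)$ is the unique minimizing ellipsoid. This yields
\[
b_i>\frac{1-\kappa m_i\cdot x_0}{1-\kappa m_1\cdot x_0}\ge\frac{1+\kappa}{1-\kappa}>0 .
$$
So each $b_i$ is bounded below, uniformly on $W$.

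Continuity needs two ingredients. Lemma \ref{lem3.9} transfers verbatim, since $|\min_i a_i-\min_i a_i'|\le\max_i|a_i-a_i'|$, so $\rho_{\mathbf{b}}$ depends uniformly continuously on $\mathbf{b}$. The proof of Lemma \ref{lem3.10} then carries over using Lemma \ref{lem4.6}, Theorem \ref{thm4.1} and the Fatou arguments. This gives continuity of $\mathbf{b}\mapsto G_{\Gamma(\mathbf{b})}(m_i)$ on regions $\{\delta\le b_i\le\tau\}$.

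Next, restrict to $\overline{W}=W\cap\{b_i\le\bar b_i\}$ for a fixed $\bar{\mathbf{b}}\in W$. This set is closed by continuity and bounded by part (b), so it is compact. Let $\mathbf{b}^*$ minimize $d$ on $\overline{W}$.

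To show $G_{\Gamma(\mathbf{b}^*)}(m_i)=g_i$ for $i\ge2$, argue by contradiction: suppose $G_{\Gamma(\mathbf{b}^*)}(m_2)<g_2$. Replace $b_2^*$ by $\xi b_2^*$ with $\xi<1$ close to $1$. Lowering one ellipsoid only shrinks the traces of the other $m_j$ up to null sets, so the constraints for $j\ne2$ persist. The $j=2$ constraint persists for $\xi$ near $1$ by continuity. The new point lies in $\overline{W}$ and has smaller $d$, a contradiction.

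For the strict inequality at $m_1$, suppose equality holds. Then $\int f\,(t_{\Gamma(\mathbf{b}^*)}-(1-C_\varepsilon))=0$, and since $\inf f>0$, $\psi(x\cdot m_1)=C_\varepsilon$ for a.e.\ $x$ in a set of positive measure in $\mathcal{T}(m_1)$. The set $\{x\cdot m_1\}$ over that set is infinite, because finitely many level sets of $x\cdot m_1$ are spherically null. But $\psi$ is a nonconstant rational function, so each of its level sets is finite. This is a contradiction.

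The main obstacle is the bookkeeping of the reversed inequalities, in particular making sure the lower bound in part (b) keeps $\overline{W}$ away from $b_i=0$, which is where continuity would fail. I will check it carefully using $\kappa\in(-1,0)$, which gives $1+\kappa\le 1-\kappa m\cdot x\le1-\kappa$.
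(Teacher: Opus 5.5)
Your proposal is correct and follows the paper's proof essentially step by step. The shared steps are nonemptiness of $W$ and a positive lower bound on the $b_i$ over $W$, continuity of $\mathbf{b}\mapsto G_{\Gamma(\mathbf{b})}(m_i)$, minimizing $d(\mathbf{b})=\sum_i b_i$ over a compact piece of $W$, the perturbation $b_2^*\mapsto \xi b_2^*$ (your $\xi<1$ is the right direction for a minimization), and finiteness of the level sets of $\psi$ for the strict inequality at $m_1$; the only blemish is that the explicit threshold written in part (a) is negative as stated, but any $b_i>\frac{1-\kappa}{1+\kappa}$ works by your own bounds $1+\kappa\le 1-\kappa m\cdot x\le 1-\kappa$.
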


\begin{remark}
    Similar to Remark \ref{rem3.6}, in order to avoid this problem being overdetermined, we must have
    $$\int_{\mathcal{T}_{\Gamma{(\mathbf{b_{0}})}(m_{1})}}f(x)t_{\Gamma(\mathbf{b}_{0})}(x)dx > g_{1}.$$
    \label{rem4.6}
\end{remark}

To prove Theorem \ref{thm4.2}, we also need the following lemmas which are similar as Lemmas \ref{lem3.8}--\ref{lem3.10}.

\begin{lemma}
  Suppose that $f \in L^{1}(\bar{\Omega})$ and $\inf\limits_{x \in \bar{\Omega}}f(x)>0$, $m_{1},m_{2},\ldots, m_{l},~l \geq 2$ are discrete points in $\bar{\Omega}^{\ast}$, $g_{1}, g_{2}, \ldots, g_{l}>0$. Suppose that $W \subseteq \mathbb{R}^{l}$ is a set defined by $W := \{\mathbf{b} = (1,b_{2},\ldots, b_{l});~b_{i}>0\ {for} \ i=2, \ldots, l\}$, and for any $\mathbf{b} \in W$, $\Gamma(\mathbf{b})$ satisfies $G_{\Gamma(\mathbf{b})}(m_{i}) = \displaystyle\int_{\mathcal{T}_{\Gamma(\mathbf{b})}(m_{i})}f(x)t_{\Gamma(\mathbf{b})}(x)dx \leq g_{i}$ for $i = 2,\ldots,l$. Then we have:
  
   (a) $W \neq \emptyset$;
  
   (b) If $\mathbf{b} = (1,b_{2}, \ldots, b_{l}) \in W$, then $b_{i}>1 + \kappa$ for $i=2, \ldots, l$.
  \label{lem4.8}
\end{lemma}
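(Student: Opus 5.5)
The plan is to follow the proof of Lemma \ref{lem3.8}, with the inequalities reversed because $\Gamma(\mathbf{b})$ is now the \emph{minimum} of semi-ellipsoids (see (\ref{4.8})), and to use (\ref{2.7}) in the form of two-sided bounds. Since $-\kappa>0$ and $x\cdot m\in[\kappa+\varepsilon,1]$ for $x\in\bar{\Omega}$, $m\in\bar{\Omega}^{\ast}$, we have
$$1-\kappa^{2}-\kappa\varepsilon \le 1-\kappa\, m\cdot x \le 1-\kappa .$$
Consequently, for all $x\in\bar{\Omega}$ and all $i,j$,
$$\frac{1-\kappa^{2}-\kappa\varepsilon}{1-\kappa}\le \frac{1-\kappa m_{i}\cdot x}{1-\kappa m_{j}\cdot x}\le \frac{1-\kappa}{1-\kappa^{2}-\kappa\varepsilon}.$$

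\emph{Step 1: part (a).} Choose $\mathbf{b}=(1,b_{2},\ldots,b_{l})$ with every $b_{i}>\frac{1-\kappa}{1-\kappa^{2}-\kappa\varepsilon}$ for $i\ge 2$. By the ratio bound, $\frac{b_{i}}{1-\kappa m_{i}\cdot x}>\frac{1}{1-\kappa m_{1}\cdot x}$ for all $x\in\bar{\Omega}$. Hence $\rho(x)=\frac{1}{1-\kappa m_{1}\cdot x}$ on $\bar{\Omega}$, and no $E(m_{i},b_{i})$ with $i\ge2$ touches $\Gamma(\mathbf{b})$.

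Now let $x_{0}\in\mathcal{T}_{\Gamma(\mathbf{b})}(m_{i})$ for some $i\ge2$. Then some $E(m_{i},b)$ supports $\Gamma(\mathbf{b})$ at $\rho(x_{0})x_{0}$. Comparing at $x_{0}$ gives $b=\rho(x_{0})(1-\kappa m_{i}\cdot x_{0})<b_{i}$, so $E(m_{i},b)\neq E(m_{1},1)$. But $E(m_{1},1)$ also supports $\Gamma(\mathbf{b})=E(m_{1},1)$ at $\rho(x_{0})x_{0}$. Thus $x_{0}$ carries two distinct supporting semi-ellipsoids and lies in the singular set $E$ by Remark \ref{rem4.2}. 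Therefore $\mathcal{T}_{\Gamma(\mathbf{b})}(m_{i})\subseteq E$. Since $|E|=0$ by Lemma \ref{lem4.1}, we get $G_{\Gamma(\mathbf{b})}(m_{i})=0\le g_{i}$, so $\mathbf{b}\in W$.

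\emph{Step 2: energy bookkeeping for part (b).} Take any $\mathbf{b}\in W$. The traces $\mathcal{T}_{\Gamma(\mathbf{b})}(m_{i})$ cover $\bar{\Omega}$, and by Lemma \ref{lem4.4} their pairwise intersections lie in the null set $E$. Using Proposition \ref{prop4.1} and the mass hypothesis,
$$\sum_{i=1}^{l}G_{\Gamma(\mathbf{b})}(m_{i})=\int_{\bar{\Omega}}f\,t_{\Gamma(\mathbf{b})}\,dx\ge(1-C_{\varepsilon})\int_{\bar{\Omega}}f\,dx\ge\sum_{i=1}^{l}g_{i}.$$
Since $G_{\Gamma(\mathbf{b})}(m_{i})\le g_{i}$ for $i\ge2$, it follows that $G_{\Gamma(\mathbf{b})}(m_{1})\ge g_{1}>0$. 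In particular $|\mathcal{T}_{\Gamma(\mathbf{b})}(m_{1})\setminus E|>0$, so we can pick a point $x_{0}\in\mathcal{T}_{\Gamma(\mathbf{b})}(m_{1})\setminus E$.

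\emph{Step 3: the bound $b_{i}>1+\kappa$.} At this $x_{0}$, $\rho(x_{0})=\frac{1}{1-\kappa m_{1}\cdot x_{0}}$. Fix $i\ge2$ and suppose $\frac{b_{i}}{1-\kappa m_{i}\cdot x_{0}}$ also equalled $\rho(x_{0})$. Since $\rho$ is a minimum, $E(m_{i},b_{i})$ would then be a second, distinct supporting semi-ellipsoid at $x_{0}$, forcing $x_{0}\in E$ by Remark \ref{rem4.2}, a contradiction. Hence
$$\frac{1}{1-\kappa m_{1}\cdot x_{0}}<\frac{b_{i}}{1-\kappa m_{i}\cdot x_{0}},$$
and therefore
$$b_{i}>\frac{1-\kappa m_{i}\cdot x_{0}}{1-\kappa m_{1}\cdot x_{0}}\ge\frac{1-\kappa^{2}-\kappa\varepsilon}{1-\kappa}>\frac{1-\kappa^{2}}{1-\kappa}=1+\kappa.$$
The last inequality uses $-\kappa\varepsilon>0$.

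The main obstacle is the singular-point argument shared by Steps 1 and 3. One must check that a second supporting ellipsoid at $x_{0}$ is genuinely distinct, i.e.\ that $m_{i}\neq m_{1}$ or $b\neq b_{i}$; this holds since the $m_{i}$ are distinct. One also needs the null-overlap property of traces from Lemma \ref{lem4.4} to justify splitting $\int_{\bar{\Omega}}$ into the sum over traces. Both follow as in Section \ref{Section 3}.
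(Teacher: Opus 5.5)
Your proposal is correct and follows the paper's proof essentially step for step. Part (a) uses the same singular-set argument; your threshold $\frac{1-\kappa}{1-\kappa^{2}-\kappa\varepsilon}$, slightly sharper than the paper's $\frac{1}{1+\kappa}$, makes $\Gamma(\mathbf{b})$ coincide with $E(m_1,1)$. Part (b) uses the same energy count, followed by evaluation at a regular point $x_0\in\mathcal{T}_{\Gamma(\mathbf{b})}(m_1)\setminus E$.

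The one step you should justify explicitly is $\rho(x_0)=\frac{1}{1-\kappa m_1\cdot x_0}$. Some index $k$ attains the minimum at $x_0$, so $E(m_k,b_k)$ supports $\Gamma(\mathbf{b})$ there. By Remark \ref{rem4.2}, the supporting ellipsoid at the non-singular point $x_0$ is unique, so $k=1$. After that, the minimum property $\rho(x_0)\le\frac{b_i}{1-\kappa m_i\cdot x_0}$ already suffices, and $\varepsilon>0$ supplies the strict inequality, so the second singular-point argument in your Step 3 is not needed.
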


\begin{proof}
  $(a)$ If for some $i \neq 1$, $E(m_{i},b_{i})$ supports $\Gamma$ at $\rho(x)x$, then we have
  $$\frac{b_{i}}{1 - \kappa} \leq \frac{b_{i}}{1 - \kappa m_{i} \cdot x} = \rho(x) \leq \frac{1}{1 - \kappa m_{1} \cdot x} \leq \frac{1}{1 - \kappa^{2}},$$
  so $b_{i} \leq \dfrac{1}{1 + \kappa}.$

  We claim that if for some $i \neq 1$, there holds $b_{i} > \dfrac{1}{1 + \kappa}$, then $\mathcal{T}_{\Gamma_{\mathbf{b}}(m_{i})} \subseteq E$, where $E$ is the singular point set of $\Gamma(\mathbf{b})$.

  Indeed, if $x \in \mathcal{T}_{\Gamma_{\mathbf{b}}(m_{i})}$, then there exists $b>0$, such that $E(m,b)$ supports $\Gamma$ at $\rho(x_{0})x_{0}$. Then we have
  $$\rho(x) = \min\limits_{1 \leq i \leq l}\frac{b_{i}}{1 - \kappa m_{i} \cdot x}, \quad \rho(x) \geq \frac{b}{1 - \kappa m_{i} \cdot x} \quad \text{and} \quad \rho(x_{0}) = \frac{b}{1 - \kappa m_{i} \cdot x_{0}},$$
  hence
  $$\frac{b}{1 - \kappa m_{i} \cdot x_{0}} = \rho(x_{0}) \leq \frac{b_{i}}{1 - \kappa m_{i} \cdot x_{0}},$$
  so we have $b < b_{i}$. If $b = b_{i}$, then $E(m_{i},b_{i})$ supports $\Gamma$, that is a contradiction, so $b < b_{i}$. Then for any $x \in \bar{\Omega}$, we have $\rho(x) \leq \dfrac{b}{1 - \kappa m_{i} \cdot x} < \dfrac{b_{i}}{1 - \kappa m_{i} \cdot x}$, so $\rho(x) = \min\limits_{j \neq 1}\dfrac{b_{j}}{1 - \kappa m_{j} \cdot x}$. Consequently, there exist $k \neq i$, such that
  $$\rho(x_{0}) = \frac{b_{k}}{1 - \kappa m_{k} \cdot x_{0}} \quad \text{and} \quad \rho(x) \leq \frac{b_{k}}{1 - \kappa m_{k} \cdot x}~\text{for all}~x\in \bar{\Omega}.$$
  So $x_{0} \in E$, then we have $\mathcal{T}_{\Gamma_{\mathbf{b}}(m_{i})} \subseteq E$.

  So we have
  $$G_{\Gamma(\mathbf{b})}(m_{i}) = \int_{\mathcal{T}_{\Gamma(\mathbf{b})}(m_{i})}f(x)t_{\Gamma(\mathbf{b})}(x)dx \leq  \int_{E}f(x)t_{\Gamma(\mathbf{b})}(x)dx < g_{i}.$$
  Take $\mathbf{b} = (1,b_{2}, \ldots, b_{l})$, such that $b_{i} > \dfrac{1}{1 + \kappa}$ for $2 \leq i \leq l$, then $\mathbf{b} \in W$, hence $W \neq \emptyset$.

  $(b)$ From Remark \ref{rem4.2}, we first claim that if $\mathbf{b} \in W$, then $g_{1} \leq G_{\Gamma(\mathbf{b})}(m_{1})$.

  Indeed, for we have
  \begin{align*}
     \sum_{i=1}^{l} G_{\Gamma(\mathbf{b})}(m_{i}) & =  \sum_{i=1}^{l} \int_{\mathcal{T}_{\Gamma(\mathbf{b})}(m_{i})}f(x)t_{\Gamma(\mathbf{b})}(x)dx \\
     & =  \int_{\bigcup\limits_{i=1}^{l}\mathcal{T}_{\Gamma(\mathbf{b})}(m_{i})}f(x)t_{\Gamma(\mathbf{b})}(x)dx  =  \int_{\bar{\Omega}}f(x)t_{\Gamma(\mathbf{b})}(x)dx\\
     & \geq (1 - C_{\varepsilon})\int_{\bar{\Omega}}f(x)dx \geq \mu(\bar{\Omega}^{\ast}) \\
     & = \sum_{i = 1}^{l} g_{i}.
  \end{align*}
  So we have
  $$g_{1} - G_{\Gamma(\mathbf{b})}(m_{1}) + \sum_{i=2}^{l}[g_{i} - G_{\Gamma(\mathbf{b})}(m_{i})] \leq 0. $$
  If $\mathbf{b} \in W$, then we have $g_{1} \leq G_{\Gamma(\mathbf{b})}(m_{1}).$

  Let $\Gamma(\mathbf{b}) = \{\rho(x)x;~x\in \bar{\Omega}\}$, we claim that there exists $\rho(x_{0})x_{0}$, such that $\rho(x_{0})x_{0} \in \Gamma(\mathbf{b}) \cap E(m_{1},1)$ and $\rho(x_{0})x_{0} \notin E(m_{i},b_{i})$ for all $i \geq 2$.

  Indeed, if not, we have $\mathcal{T}_{\Gamma(\mathbf{b})}(m_{1}) \subseteq E$, then
  $$G_{\Gamma(\mathbf{b})}(m_{1}) = \int_{\mathcal{T}_{\Gamma(\mathbf{b})}(m_{1})} f(x)t_{\Gamma(\mathbf{b})}(x)dx \leq \int_{E} f(x)t_{\Gamma(\mathbf{b})}(x)dx = 0$$
  for $\vert E \vert =0$. This is a contradiction with $g_{1} >0$, hence
  $$\rho(x_{0}) = \frac{1}{1 - \kappa m_{1} \cdot x_{0}} < \frac{b_{i}}{1 - \kappa m_{i} \cdot x_{0}},$$
  and thus we have
  $$b_{i} > \frac{1 - \kappa m_{i} \cdot x_{0}}{1 - \kappa m_{1} \cdot x_{0}} > 1 + \kappa.$$
\end{proof}

\begin{lemma}
  Let $\mathbf{b}_{k} = (b_{1}^{k},\ldots,b_{l}^{k})$ and $\mathbf{b}_{0} = (b_{1}^{0},\ldots,b_{l}^{0})$ with $\mathbf{b}_{k}\rightarrow \mathbf{b}_{0}$ in $\mathbb{R}^{l}$. Suppose that $\Gamma_{k} = \Gamma(\mathbf{b}_{k}) = \{\rho_{k}(x)x;~x\in \bar{\Omega}\}$, $\Gamma_{0} = \Gamma(\mathbf{b}_{0}) = \{\rho(x)x;~x\in \bar{\Omega}\}$, then $\rho_{k} \rightarrow \rho$ uniformly on $\bar{\Omega}$.
  \label{lem4.9}
\end{lemma}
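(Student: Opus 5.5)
The plan mirrors the proof of Lemma \ref{lem3.9}, with the maximum replaced by a minimum. The key elementary fact is that for real numbers $a_{1},\ldots,a_{l}$ and $a_{1}',\ldots,a_{l}'$ one has
$$\Bigl|\min_{1\le i\le l}a_{i}-\min_{1\le i\le l}a_{i}'\Bigr|\le \max_{1\le i\le l}|a_{i}-a_{i}'|,$$
and the same bound holds for the maximum. To check it, let $i_{0}$ be an index where the second minimum is attained. Then $\min_{i} a_{i}\le a_{i_{0}}\le \min_{i}a_{i}' + |a_{i_{0}}-a_{i_{0}}'|$, and the reverse inequality follows by exchanging the roles of the two families.

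For fixed $x_{0}\in\bar{\Omega}$ apply this with
$$a_{i}=\frac{b_{i}^{0}}{1-\kappa m_{i}\cdot x_{0}}\quad\text{and}\quad a_{i}'=\frac{b_{i}^{k}}{1-\kappa m_{i}\cdot x_{0}}.$$
This gives
$$|\rho(x_{0})-\rho_{k}(x_{0})|\le \max_{i}\frac{|b_{i}^{0}-b_{i}^{k}|}{1-\kappa m_{i}\cdot x_{0}}.$$

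It remains to bound the denominator from below uniformly in $x_{0}$. By (\ref{2.7}) we have $m_{i}\cdot x_{0}\ge \kappa+\varepsilon$. Since $-\kappa>0$, this gives
$$1-\kappa m_{i}\cdot x_{0}\ge 1-\kappa(\kappa+\varepsilon)=1-\kappa^{2}-\kappa\varepsilon\ge 1-\kappa^{2}>0.$$
Alternatively, one can use only $m_{i}\cdot x_{0}\ge -1$, which gives $1-\kappa m_{i}\cdot x_{0}\ge 1+\kappa>0$ because $-1<\kappa<0$. Either way,
$$\sup_{x_{0}\in\bar{\Omega}}|\rho(x_{0})-\rho_{k}(x_{0})|\le \frac{\Vert \mathbf{b}_{k}-\mathbf{b}_{0}\Vert}{1+\kappa}\to 0,$$
which is the claimed uniform convergence.

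There is no real obstacle here. The only point needing care is the min/max inequality, since the proof of Lemma \ref{lem3.9} glosses over the fact that the minimizing index may differ between $\rho$ and $\rho_{k}$. Checking both inequalities as above handles this.
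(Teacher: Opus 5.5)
Your proof is correct and follows the same route as the paper. Both compare the $l$ functions $b_i/(1-\kappa m_i\cdot x)$ term by term and use the uniform lower bound $1-\kappa m_i\cdot x\ge 1-\kappa^{2}$ coming from (\ref{2.7}). Your two-sided min inequality is more careful than the paper's step, which fixes only the minimizing index of $\rho$ at $x_0$ and so, strictly speaking, only bounds $\rho_k(x_0)-\rho(x_0)$ from above.
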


\begin{proof}
  For $x_{0} \in \bar{\Omega}$, we have
  \begin{align*}
    \vert \rho_{k}(x_{0}) - \rho(x_{0}) \vert& = \vert \rho_{k}(x_{0}) - \frac{b_{i}}{1 - \kappa m_{i} \cdot x_{0}}  \vert \quad \text{for some}~i  \\
     & \leq  \vert \frac{b_{i}^{k}}{1 - \kappa m_{i} \cdot x_{0}} - \frac{b_{i}}{1 - \kappa m_{i} \cdot x_{0}} \vert \\
     & \leq \frac{\Vert \mathbf{b}^{k} - \mathbf{b} \Vert}{1 -\kappa^{2}},
  \end{align*}
  hence $\rho_{k} \rightarrow \rho$ uniformly on $\bar{\Omega}$.
\end{proof}

\begin{lemma}
  Let $\tau >0$, then $G_{\Gamma(\mathbf{b})}(m_{i}) = \displaystyle\int_{\mathcal{T}_{\Gamma(\mathbf{b})}(m_{i})}f(x)t_{\Gamma(\mathbf{b})}(x)dx$ is continuous on the region $R_{\tau} = \{(1,b_{2},\ldots, b_{l});~b_{i}> \tau,~i=2,\ldots,l\}$, for any $1 \leq i \leq l$.
  \label{lem4.10}
\end{lemma}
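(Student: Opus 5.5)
The plan is to copy the proof of Lemma \ref{lem3.10}, changing the geometry from the maximum of hyperboloids to the minimum of ellipsoids. Take $\mathbf{b}_{k}\rightarrow \mathbf{b}_{0}$ in $R_{\tau}$. By Lemma \ref{lem4.9}, the radial functions satisfy $\rho_{k}\rightarrow \rho_{0}$ uniformly on $\bar{\Omega}$.

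\textbf{Step 1: uniform two-sided bounds on $\rho_k$.} The condition $b_{i}^{k}>\tau$ for $i\ge 2$ together with $b_{1}=1$ gives
\[
\rho_{k}(x)=\min_{i}\frac{b_{i}^{k}}{1-\kappa m_{i}\cdot x}\ \ge\ \frac{\min\{1,\tau\}}{1-\kappa}>0 .
\]
For the upper bound, use
\[
\rho_{k}(x)\le \frac{1}{1-\kappa m_{1}\cdot x}\le \frac{1}{1-\kappa^{2}}\le \frac{1}{1+\kappa},
\]
where the middle inequality holds because $x\cdot m_{1}\ge\kappa$. So $a_{1}\le\rho_{k}\le a_{2}$, and the hypotheses of Lemma \ref{lem4.6}, Lemma \ref{lem4.7} and Theorem \ref{thm4.1} are met. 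Let $E$ be the union of the singular sets of all $\Gamma(\mathbf{b}_{k})$ and of $\Gamma(\mathbf{b}_{0})$. By Lemma \ref{lem4.1}, $|E|=0$.

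\textbf{Step 2: localization at $m_{i}$.} Choose a neighborhood $G$ of $m_{i}$ that contains no other $m_{j}$. Suppose $x_{0}\in\mathcal{T}_{\Gamma(\mathbf{b}_{k})}(G)\setminus E$. Then $\Gamma(\mathbf{b}_k)$ has a unique supporting ellipsoid at $\rho_k(x_0)x_0$, and this ellipsoid must be one of the $E(m_{j},b_{j}^{k})$ from the minimum. Hence it is $E(m_{i},b^k_i)$, and so
\[
\mathcal{T}_{\Gamma(\mathbf{b}_{k})}(G)\subseteq \mathcal{T}_{\Gamma(\mathbf{b}_{k})}(m_{i})\cup E ,
\]
and likewise for $\mathbf{b}_{0}$.

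\textbf{Step 3: lower bound.} Lemma \ref{lem4.6}(c) gives $\mathcal{T}_{\Gamma(\mathbf{b}_{0})}(G)\subseteq\varliminf_k\mathcal{T}_{\Gamma(\mathbf{b}_{k})}(G)\cup E$. Combine this with Step 2, Theorem \ref{thm4.1}(b) (applied with $F=\{m_i\}$, to replace $t_{\Gamma(\mathbf{b}_0)}$ by the $\liminf$ of $\chi t_{\Gamma(\mathbf{b}_k)}$) and Fatou's lemma. This yields
\[
G_{\Gamma(\mathbf{b}_{0})}(m_{i})\le G_{\Gamma(\mathbf{b}_{0})}(G)\le\varliminf_k G_{\Gamma(\mathbf{b}_{k})}(m_{i}).
\]

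\textbf{Step 4: upper bound.} Lemma \ref{lem4.6}(b) with the compact set $K=\{m_{i}\}$ gives $\varlimsup_k\mathcal{T}_{\Gamma(\mathbf{b}_{k})}(m_{i})\subseteq\mathcal{T}_{\Gamma(\mathbf{b}_{0})}(m_{i})$. The reverse Fatou lemma applies because the integrands are dominated by $f\in L^{1}$, using $t\le1$. Together with Theorem \ref{thm4.1}(a) this gives
\[
\varlimsup_k G_{\Gamma(\mathbf{b}_{k})}(m_{i})\le G_{\Gamma(\mathbf{b}_{0})}(m_{i}).
\]
Steps 3 and 4 together prove continuity.

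\textbf{Expected obstacle.} The delicate point is Theorem \ref{thm4.1} and Lemma \ref{lem4.7}, which give convergence of $t_{k}(y)$ only along subsequences. I would handle this with a subsequence-of-subsequence argument. At a point $y\notin E$ lying in the relevant $\liminf$ or $\limsup$ set, the supporting ellipsoids $E(m_{i},b_{i}^{k})$ at $y$ have fixed direction $m_{i}$ and parameters $b_{i}^{k}\to b_{i}^{0}$. So by Lemma \ref{lem4.5} the normals converge along the full sequence, and $t_{k}(y)\to t(y)$ without passing to a subsequence. Away from those sets the characteristic functions vanish eventually, so nothing is needed there. This makes the Fatou steps legitimate.
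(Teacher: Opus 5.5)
Your proposal is correct and follows the paper's proof essentially step for step. Both arguments get uniform bounds on $\rho_k$ and uniform convergence from Lemma~\ref{lem4.9}, localize to a neighborhood $G$ of $m_i$, then use Lemma~\ref{lem4.6}(c) with Fatou for the lower bound and Lemma~\ref{lem4.6}(b) with reverse Fatou for the upper bound, invoking Theorem~\ref{thm4.1} in both. Your refinements are minor improvements over points the paper glosses over: the lower bound $\min\{1,\tau\}/(1-\kappa)$ instead of $\tau/(1-\kappa)$, ending the upper bound directly with $\mathcal{T}_{\Gamma(\mathbf{b}_0)}(m_i)$, and noting that on the relevant sets $t_k(y)$ is fixed by $y$ and $m_i$, so no subsequence is needed.
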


\begin{proof}
  Suppose that $\mathbf{b}_{k}$ is a sequence converges to $\mathbf{b}_{0}$ in $R_{\tau}$, and let $\Gamma(\mathbf{b}_{k}) = \{\rho_{k}(x)x;~x\in \bar{\Omega}\}$, $\Gamma(\mathbf{b}_{0}) = \{\rho_{0}(x)x;~x\in \bar{\Omega}\}$. Then from Lemma \ref{lem4.9}, $\rho_{k} \rightarrow \rho$ uniformly on $\bar{\Omega}$. Besides, for any $x\in \bar{\Omega}$ and $k \geq 1$, we have
  $$\rho_{k}(x) = \frac{b_{i}^{k}}{1 - \kappa m_{i} \cdot x} \geq \frac{\tau}{1 - \kappa m_{i} \cdot x} \geq \frac{\tau}{1 - \kappa}$$
  and
  $$\rho_{k}(x) = \min_{1 \leq i \leq l}\frac{b_{i}^{k}}{1 - \kappa m_{i} \cdot x} \leq \frac{1}{1 - \kappa m_{1} \cdot x} \leq\frac{1}{1 - \kappa^{2}}.$$
  for some $i \in \{1,2,\ldots,l\}$. Hence there exist $0 < a_{1} \leq a_{2}$, such that $ a_{1} \leq \rho_{k}(x) \leq  a_{2}$.

  Suppose that $G \in \bar{\Omega}^{\ast}$ is a neighborhood of $m_{i}$, such that $m_{j} \notin G$ for all $j \neq i$. If $x_{0} \in \mathcal{T}_{\Gamma({\mathbf{b}_{k}})}(G)$ and $x_{0} \notin E$, then there exist a unique $m \in G$ and $b >0$, such that
  $$\rho_{k}(x_{0}) = \frac{b}{1 - \kappa m \cdot x_{0}} \quad \text{and} \quad \rho_{k}(x) \leq \frac{b}{1 - \kappa m \cdot x}~\text{for all}~x \in \bar{\Omega}.$$
  From the definition of $\Gamma(\mathbf{b}_{k})$ in Lemma \ref{lem4.9}, we have $m = m_{j}$ for some $j = 1,2,\ldots,l$, hence we have $m = m_{j}$, then $\mathcal{T}_{\Gamma({\mathbf{b}_{k}})}(G) \subseteq \mathcal{T}_{\Gamma({\mathbf{b}_{k}})}(m_{i}) \cup E$. For $\vert E \vert = 0$, from Lemma \ref{lem4.6}, we have
  \begin{equation}\label{4.9}
    \begin{aligned}
    \int_{\mathcal{T}_{\Gamma({\mathbf{b}_{0}})}(G)}f(x)t_{\Gamma({\mathbf{b}_{0}})}(x)dx & \leq \int_{\varliminf\limits_{k \rightarrow \infty} \mathcal{T}_{\Gamma({\mathbf{b}_{k}})}(m_{i}) \cup E}f(x)t_{\Gamma({\mathbf{b}_{0}})}(x)dx \\
     & \leq \int_{\varliminf\limits_{k \rightarrow \infty} \mathcal{T}_{\Gamma({\mathbf{b}_{k}})}(m_{i})}f(x)t_{\Gamma({\mathbf{b}_{0}})}(x)dx + \int_{E}f(x)t_{\Gamma({\mathbf{b}_{0}})}(x)dx \\
     & = \int_{\bar{\Omega}}\chi_{\varliminf\limits_{k \rightarrow \infty} \mathcal{T}_{\Gamma({\mathbf{b}_{k}})}(m_{i})}f(x)t_{\Gamma({\mathbf{b}_{0}})}(x)dx.
  \end{aligned}
  \end{equation}
  Obviously, we have
  \begin{equation}\label{4.10}
   \chi_{\varliminf\limits_{k \rightarrow \infty} \mathcal{T}_{\Gamma({\mathbf{b}_{k}})}(m_{i})}(x) = \varliminf\limits_{k \rightarrow \infty} \chi_{\mathcal{T}_{\Gamma({\mathbf{b}_{k}})}(m_{i})}(x).
  \end{equation}
  Applying Theorem \ref{thm4.1}, (\ref{4.10}) and Fatou Lemma to (\ref{4.9}), we have
  \begin{equation}\label{4.11}
    \begin{aligned}
    \int_{\mathcal{T}_{\Gamma({\mathbf{b}_{0}})}(G)}f(x)t_{\Gamma({\mathbf{b}_{0}})}(x)dx & \leq \int_{\bar{\Omega}}\varliminf\limits_{k \rightarrow \infty} \chi_{\mathcal{T}_{\Gamma({\mathbf{b}_{k}})}(m_{i})}(x)t_{\Gamma({\mathbf{b}_{k}})}f(x)dx \\
    & \leq \varliminf\limits_{k \rightarrow \infty} \int_{\bar{\Omega}}\chi_{\mathcal{T}_{\Gamma({\mathbf{b}_{k}})}(m_{i})}(x)t_{\Gamma({\mathbf{b}_{k}})}f(x)dx \\
    & = \varliminf\limits_{k \rightarrow \infty} \int_{\mathcal{T}_{\Gamma({\mathbf{b}_{k}})}(m_{i})}t_{\Gamma({\mathbf{b}_{k}})}f(x)dx.
    \end{aligned}
  \end{equation}
  Besides, we also have
  \begin{equation}\label{4.12}
    \chi_{\varlimsup\limits_{k \rightarrow \infty} \mathcal{T}_{\Gamma({\mathbf{b}_{k}})}(m_{i})}(x) = \varlimsup\limits_{k \rightarrow \infty} \chi_{\mathcal{T}_{\Gamma({\mathbf{b}_{k}})}(m_{i})}(x).
  \end{equation}
  From inverse Fatou lemma, Lemma \ref{lem4.6}, Theorem \ref{thm4.1} and (\ref{4.12}), we have
  \begin{equation}\label{4.13}
    \begin{aligned}
     \varlimsup\limits_{k \rightarrow \infty} \int_{\mathcal{T}_{\Gamma({\mathbf{b}_{k}})}(m_{i})}t_{\Gamma({\mathbf{b}_{k}})}f(x)dx  & \leq \int_{\bar{\Omega}}\varlimsup\limits_{k \rightarrow \infty} \chi_{\mathcal{T}_{\Gamma({\mathbf{b}_{k}})}(m_{i})}(x)t_{\Gamma({\mathbf{b}_{k}})}f(x)dx  \\
     & = \int_{\bar{\Omega}}\chi_{\varlimsup\limits_{k \rightarrow \infty} \mathcal{T}_{\Gamma({\mathbf{b}_{k}})}(m_{i})}(x)f(x)t_{\Gamma(\mathbf{b}_{0})}(x)dx \\
     & = \int_{\varlimsup\limits_{k \rightarrow \infty} \mathcal{T}_{\Gamma({\mathbf{b}_{k}})}(m_{i})}f(x)t_{\Gamma(\mathbf{b}_{0})}(x)dx \\
     & \leq \int_{\mathcal{T}_{\Gamma({\mathbf{b}_{0}})}(G)}f(x)t_{\Gamma(\mathbf{b}_{0})}(x)dx.
    \end{aligned}
  \end{equation}
  Combining (\ref{4.11}) with (\ref{4.13}), we get $G_{\Gamma(\mathbf{b})}(m_{i})$ is continuous on the region $R_{\tau}$.
\end{proof}

Based on the above lemmas, we can prove the existence of the weak solution when $\mu$ is discrete measure.

\begin{proof}[Proof of Theorem \ref{thm4.2}]
  Fixed $\overline{\mathbf{b}} = (1,\overline{b_{2}},\ldots,\overline{b_{l}})$, consider the set $\overline{W} = \{\mathbf{b}_{i} = (1,b_{2},\ldots,b_{l});~
    b_{i} \leq \overline{b_{i}},~ i=2,\ldots,l\}$, from Lemma \ref{lem4.8} and Lemma \ref{lem4.10}, $W$ is a compact set. Define a mapping
  $$d:\overline{W}\rightarrow \mathbb{R};~\mathbf{b} \mapsto \sum_{i=1}^{l}b_{i}.$$
  Let $\mathbf{b}^{\ast} = \arg\min\limits_{\mathbf{b}\in \overline{W}}d(\mathbf{b})$, for the compactness of $\overline{W}$, then we know $d$ is a continuous mapping hence $\mathbf{b}^{\ast}$ exists.

  Taking $\mathbf{b}_{0} = \mathbf{b}^{\ast}$, we first prove that $\displaystyle\int_{\mathcal{T}_{\Gamma(\mathbf{b}_{0})}(m_{i})}f(x)t_{\Gamma(\mathbf{b}_{0})}(x)dx = g_{i}$ for $i = 2,\ldots,l$.

  Indeed, if not, we may assume that $\displaystyle\int_{\mathcal{T}_{\Gamma(\mathbf{b}_{0})}(m_{2})}f(x)t_{\Gamma(\mathbf{b}_{0})}(x)dx < g_{2}.$ Taking $\xi >1$ and let $\mathbf{b}_{\xi} = (1,\xi b_{2}^{\ast},\ldots, b_{l}^{\ast})$. If $x_{0} \in \mathcal{T}_{\Gamma(\mathbf{b}_{\xi})}(m_{i}) \setminus E_{\xi}^{\ast}$, where $E_{\xi}^{\ast}$ is the singular point set of $\mathbf{b}_{\xi}$, then we have
  $$\rho(x_{0}) = \frac{b_{i}^{\ast}}{1 - \kappa m_{i} \cdot x_{0}} \quad \text{and} \quad \rho(x) \leq \frac{b_{i}^{\ast}}{1 - \kappa m_{i} \cdot x}~\text{for all}~x\in \bar{\Omega},$$
  hence $x_{0} \in \mathcal{T}_{\Gamma(\mathbf{b}^{\ast})}(m_{i})$, then $\mathcal{T}_{\Gamma(\mathbf{b}_{\xi}^{\ast})}(m_{i}) \setminus E_{\xi}^{\ast} \subseteq \mathcal{T}_{\Gamma(\mathbf{b}^{\ast})}(m_{i})$. So we have
  $$\int_{\mathcal{T}_{\Gamma(\mathbf{b}_{\xi}^{\ast})}(m_{i})}f(x)t_{\Gamma(\mathbf{b}_{\xi}^{\ast})}dx = \int_{\mathcal{T}_{\Gamma(\mathbf{b}_{\xi}^{\ast})}(m_{i})}f(x)t_{\Gamma(\mathbf{b}^{\ast})}dx \leq \int_{\mathcal{T}_{\Gamma(\mathbf{b}^{\ast})}(m_{i})}f(x)t_{\Gamma(\mathbf{b}^{\ast})}dx.$$
  Let $\xi \rightarrow 1$, then from Lemma \ref{lem4.10}, we have $G_{\Gamma_{\mathbf{b}_{\xi}^{\ast}}} < g_{2}$, hence $b_{\xi}^{\ast} \in W$, this is a contradiction with $d(\mathbf{b}_{\xi}^{\ast}) \geq d(\mathbf{b}_{\xi})$.
  
  The proof of $\displaystyle\int_{\mathcal{T}_{\Gamma(\mathbf{b}_{0})}(m_{1})}f(x)t_{\Gamma(\mathbf{b}_{0})}(x)dx > g_{1}$ is same as which in Theorem \ref{thm3.3}. 
     \end{proof}

\subsection{Existence of the weak solution when $\mu$ is a finite Radon measure}

In this subsection, we discuss the existence of the weak solution of the far field refraction problem for the case $-1 < \kappa <0$ with loss of energy when $\mu$ is a finite Radon measure. We use the similar method as Section 3.5 to prove the following theorem.

\begin{theorem}
  Suppose that $f$ is integrable on $\bar{\Omega}$ and $\inf\limits_{x\in \bar{\Omega}}f(x)>0$. Let $\mu$ be a Radon measure on $\bar{\Omega}^{\ast}$ and
  \begin{equation}\label{4.14}
    \int_{\bar{\Omega}}f(x)dx \geq \frac{1}{1 - C_{\varepsilon}}\mu(\bar{\Omega}^{\ast}),
  \end{equation}
  where $C_{\varepsilon}$ is defined in Proposition \ref{prop4.1}. Then there exists a refractor $\Gamma$, such that for any Borel subset $\omega \subseteq \bar{\Omega}^{\ast}$, we have
  $$\mu(\omega) \leq \int_{\mathcal{T}_{\Gamma}(\omega)}f(x)t_{\Gamma}(x)dx,$$
  that is, there exists a weak solution of the refraction problem for the case $-1 <\kappa < 0$ with emitting illumination intensity $f$ and prescribed refracted intensity $\mu$.
  \label{thm4.3}
\end{theorem}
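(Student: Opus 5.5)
The proof follows the scheme of Theorem \ref{thm3.4}: approximate $\mu$ by discrete measures, solve each discrete problem with Theorem \ref{thm4.2}, normalize, extract a uniformly convergent subsequence, and pass to the limit in the refractor measures.

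\emph{Discretization.} For each integer $\iota\geq 2$ partition $\bar{\Omega}^{\ast}$ into disjoint Borel sets $\omega_{1}^{\iota},\ldots,\omega_{l_{\iota}}^{\iota}$ with $\mathrm{diam}(\omega_{i}^{\iota})\leq 1/\iota$. Pick $m_{i}^{\iota}\in\omega_{i}^{\iota}$ and set $\mu_{\iota}=\sum_{i}\mu(\omega_{i}^{\iota})\delta_{m_{i}^{\iota}}$. We discard empty or null cells; if only one cell remains, we merge in a second point with tiny mass, so that $l_{\iota}\geq 2$. Exactly as in Theorem \ref{thm3.4}, uniform continuity of $h\in C(\bar{\Omega}^{\ast})$ gives $\mu_{\iota}\to\mu$ weakly. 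Since $\mu_{\iota}(\bar{\Omega}^{\ast})=\mu(\bar{\Omega}^{\ast})$, condition (\ref{4.14}) holds for every $\iota$. Theorem \ref{thm4.2} then gives $\Gamma_{\iota}=\Gamma(\mathbf{b}^{\iota})$ of the form (\ref{4.8}), i.e. a minimum of semi-ellipsoids, with $G_{\Gamma_{\iota}}(\omega)\geq\mu_{\iota}(\omega)$ for every Borel $\omega$. By Remark \ref{rem4.4} we rescale so that $\inf\rho_{\iota}=1$.

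\emph{Compactness.} Lemma \ref{lem4.2} gives $1\leq\rho_{\iota}\leq C(\kappa)$. For a uniform Lipschitz bound, take a supporting ellipsoid $E(m_{0},b_{0})$ at $x_{0}$. Then $b_{0}=\rho_{\iota}(x_{0})(1-\kappa m_{0}\cdot x_{0})\leq C(1-\kappa)$, and $1-\kappa m\cdot x\geq 1+\kappa\,\cdot\,1\cdot(-1)\cdot(-1)$; more precisely, $1-\kappa m\cdot x\geq 1-|\kappa|>0$ because $-1<\kappa<0$. Since $\rho_{\iota}(x_{1})\leq b_{0}/(1-\kappa m_{0}\cdot x_{1})$, we get $\rho_{\iota}(x_{1})-\rho_{\iota}(x_{0})\leq C'|x_{1}-x_{0}|$, and swapping roles gives the uniform Lipschitz estimate. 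Arzel\`a--Ascoli yields a subsequence $\rho_{\iota}\to\rho$ uniformly. Lemma \ref{lem4.6}(a) shows $\Gamma=\{\rho(x)x\}$ is a refractor.

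\emph{Passing to the limit.} For compact $K\subseteq\bar{\Omega}^{\ast}$ we argue as in Theorem \ref{thm3.4}. By Lemma \ref{lem4.6}(b), Theorem \ref{thm4.1}(a) (applicable off the null set $E$ of all singular points, via Lemma \ref{lem4.1}) and the reverse Fatou lemma (dominated by $f\in L^{1}$),
\[
\limsup_{\iota}G_{\Gamma_{\iota}}(K)\leq G_{\Gamma}(K).
\]
By weak convergence of $\mu_{\iota}$ on closed sets, $\mu(K)\leq\limsup\mu_{\iota}(K)\leq\limsup G_{\Gamma_{\iota}}(K)\leq G_{\Gamma}(K)$.

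\emph{Extension to Borel sets.} By Lemma \ref{lem4.4}(b) and Remark \ref{rem3.3}, $G_{\Gamma}$ is a finite Borel measure on $\bar{\Omega}^{\ast}$, and $\mu$ is Radon, hence inner regular. For Borel $\omega$ we therefore get $\mu(\omega)=\sup_{K\subseteq\omega}\mu(K)\leq\sup_{K\subseteq\omega}G_{\Gamma}(K)\leq G_{\Gamma}(\omega)$, which proves the theorem.

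\emph{Main obstacle.} The delicate step is the upper-semicontinuity estimate on compact sets. The Fresnel factor $t_{\Gamma_{\iota}}$ depends on the normals of varying surfaces, and Lemma \ref{lem4.7} and Theorem \ref{thm4.1} only give convergence along subsequences depending on the point $y$. Theorem \ref{thm4.1}(a) is formulated with $\limsup$, which is exactly what the reverse Fatou lemma needs. Still, we must check that the exceptional set, namely the union of the countably many singular sets, is null. We must also check that on the limsup of the traces $\mathcal{T}_{\Gamma_{\iota}}(K)$, the points lie in $\mathcal{T}_{\Gamma}(K)$ by Lemma \ref{lem4.6}(b) and $t_{\Gamma}$ is the correct limit value there.
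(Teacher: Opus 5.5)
Your construction and compactness steps (discretization, Theorem \ref{thm4.2}, normalization $\inf\rho_\iota=1$, Lemma \ref{lem4.2}, the uniform Lipschitz bound, Arzel\`a--Ascoli, Lemma \ref{lem4.6}(a)) match the paper. So does your estimate $\limsup_{\iota}G_{\Gamma_\iota}(K)\le G_\Gamma(K)$ for compact $K$.

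The gap is the link to $\mu$. You write $\mu(K)\le\limsup_\iota\mu_\iota(K)$ ``by weak convergence of $\mu_\iota$ on closed sets''. The portmanteau theorem gives the \emph{opposite} inequality $\limsup_\iota\mu_\iota(K)\le\mu(K)$ for closed $K$, and yours is false in general. For instance, suppose $\mu$ has an atom at $m^{\ast}$ and the representative of the unique cell containing $m^{\ast}$ is chosen different from $m^{\ast}$. Then $\mu_\iota(\{m^{\ast}\})=0$ for every $\iota$, while $\mu(\{m^{\ast}\})>0$. So your chain $\mu(K)\le\limsup\mu_\iota(K)\le\limsup G_{\Gamma_\iota}(K)\le G_\Gamma(K)$ breaks at its first step.

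The repair uses only what you already have, applied to closed neighbourhoods of $K$ rather than $K$ itself. Let $K_\delta=\{m\in\bar{\Omega}^{\ast}:\operatorname{dist}(m,K)\le\delta\}$, which is compact. If $1/\iota\le\delta$, every cell $\omega_i^\iota$ meeting $K$ has $m_i^\iota\in K_\delta$, because $\operatorname{diam}(\omega_i^\iota)\le 1/\iota$. Hence $\mu(K)\le\mu_\iota(K_\delta)\le G_{\Gamma_\iota}(K_\delta)$; alternatively, use the open-set half of portmanteau on $\{\operatorname{dist}(\cdot,K)<\delta\}$. Your upper semicontinuity on $K_\delta$, along the convergent subsequence, gives $\mu(K)\le G_\Gamma(K_\delta)$. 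Since $K_\delta\downarrow K$ as $\delta\downarrow 0$ and $G_\Gamma$ is a finite measure, $\mu(K)\le G_\Gamma(K)$, and your inner-regularity step then finishes the proof.

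The paper handles this differently. It proves $G_{\Gamma_\iota}\to G_\Gamma$ weakly, using an upper bound on compact sets and a lower bound on open sets. It then implicitly passes to the limit on both sides of $\int h\,d\mu_\iota\le\int h\,dG_{\Gamma_\iota}$ for continuous $h\ge 0$. Once repaired, your route needs only the compact-set bound.

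A minor point concerns ``merging in a second point with tiny mass''. If this adds mass, it contradicts your claim $\mu_\iota(\bar{\Omega}^{\ast})=\mu(\bar{\Omega}^{\ast})$ and can violate (\ref{4.14}) when it holds with equality. When only one cell carries mass, simply take the single semi-ellipsoid $E(m_1^\iota,1)$. Its trace at $m_1^\iota$ is all of $\bar{\Omega}$, so its refractor measure there is $\int_{\bar{\Omega}}f\,t\ge(1-C_\varepsilon)\int_{\bar{\Omega}}f\ge\mu(\bar{\Omega}^{\ast})$, using $t=1-r\ge 1-C_\varepsilon$.
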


\begin{proof}
  Let $\iota$ be an integer, $\iota \geq 2$. Segmenting $\bar{\Omega}^{\ast}$ into finite disjoint subsets $\omega_{1}^{\iota}, \omega_{2}^{\iota}, \ldots,\omega_{l_{\iota}}^{\iota}$, such that $diam(\omega_{i}^{\iota}) \leq \dfrac{1}{\iota}$ for $i = 1,2,\ldots,l_{\iota}$. Take $m_{i}^{\iota} \in \omega_{i}^{\iota}$ and consider the measure $\mu_{\iota} := \sum\limits_{i = 1}^{l _{\iota}}\mu(\omega_{i}^{\iota}) \delta_{m_{i}^{\iota}}$ defined on $\bar{\Omega}^{\ast}$. From the proof of Theorem \ref{thm3.4}, we know that $\mu_{\iota} \rightarrow \mu$ weakly as $\iota \rightarrow \infty$.

  From (\ref{4.14}), we have $\mu_{\iota}(\bar{\Omega}^{\ast}) = \mu(\bar{\Omega}^{\ast}) \leq (1 - C_{\varepsilon})\displaystyle\int_{\bar{\Omega}}f(x)dx$, then from Theorem \ref{thm4.2}, there exists a refractor $\Gamma_{\iota} = \{\rho_{\iota}(x)x;~\rho_{\iota}(x) = \min\limits_{1 \leq i \leq l_{\iota}}\dfrac{b_{i}}{1 - \kappa m_{i}^{\iota} \cdot x}\}$, such that $\mu_{\iota}(\omega) \leq \displaystyle\int_{\mathcal{T}_{\Gamma_{\iota}}(\omega)}f(x)t_{\Gamma_{\iota}}(x)dx$. Normalized $\Gamma_{\iota}$, such that $\inf\limits_{x \in \bar{\Omega}}\rho_{\iota}(x) = 1$, then from Lemma \ref{lem4.2}, there exists a constant $C>0$, such that $\sup\limits_{x \in \bar{\Omega}}\rho_{\iota}(x) \leq C$ for all $\iota \geq 1$.

  Besides, if $x_{0}, x_{1} \in \bar{\Omega}$ and $E(m_{0},b_{0})$ supports $\Gamma_{\iota}$ at $\rho_{\iota}(x_{0})x_{0}$, then for $x_{1} \in \bar{\Omega}$, we have
  \begin{align*}
    \vert \rho_{\iota}(x_{1}) - \rho_{\iota}(x_{0}) \vert & \leq \vert \frac{b_{0}}{1 - \kappa m_{0} \cdot x_{1}} - \frac{b_{0}}{1 - \kappa m_{0} \cdot x_{0}} \vert\\
      & \leq \frac{-\kappa b_{0}}{(1 - \kappa m_{0} \cdot x_{1})(1 - \kappa m_{0} \cdot x_{0})} \Vert x_{1} - x_{0} \Vert \\
      & \leq \frac{-\kappa}{1 - \kappa^{2}}\frac{b_{0}}{1 - \kappa^{2}} \Vert x_{1} - x_{0} \Vert\\
      & \leq \frac{C}{1 - \kappa^{2}} \Vert x_{1} - x_{0} \Vert.
  \end{align*}
  Exchanging the roles of $x_{1}$ and $x_{0}$, we have
  $$\vert \rho_{\iota}(x_{0}) - \rho_{\iota}(x_{1}) \vert \leq \frac{C}{1 - \kappa^{2}} \Vert x_{0} - x_{1} \Vert,$$
  hence $\{\rho_{\iota}(x);~\iota \geq 1\}$ is a family of bounded uniformly and equicontinuous functions. Then from Arezlà-Ascoli Theorem, $\rho_{\iota}(x) \rightarrow \rho(x)$ uniformly as $\iota \rightarrow \infty$ for all $x \in \bar{\Omega}$. Then from Lemma \ref{lem4.6} $(a)$, $\Gamma = \{\rho(x)x;~x \in \bar{\Omega}\}$ is a refractor. 
  
  Similar as the proof of Theorem \ref{thm3.4}, we also have $G_{\Gamma{\iota}} := \displaystyle \int_{\mathcal{T}_{\Gamma_{\iota}}(\omega)}f(x)t_{\Gamma_{\iota}}(x)dx \rightarrow G_{\Gamma} := \displaystyle \int_{\mathcal{T}_{\Gamma}(\omega)}f(x)t_{\Gamma}(x)dx$ weakly as $\iota \rightarrow \infty$, hence the weak solution of the problem exists.
\end{proof}

\section{The inequality for the problem}\label{Section 5}

\sloppy{}

In this section, we derive inequality involving a Monge-Amp\`ere type operator satisfied by $\rho$. We first recall the Jacobian equation given in \cite{GM13}.

Let $X = (x,x_{n})$ be a point in the sphere $S^{n-1}$, where $x = (x_{1},\ldots,x_{n-1})$. Let $\Gamma = \{\rho(X)X;~X\in \bar{\Omega}\}$ be a weak solution of the refractor problem from $\bar{\Omega}$ to $\bar{\Omega}^{\ast}$ with emitting illumination intensity $f$ and prescribed refracted illumination intensity $g$. Assume that $\bar{\Omega}$ is a subset of upper unit sphere $S^{n-1}_{+} = S^{n-1} \cap \{x_{n} >0\}$, then $\bar{\Omega}$ can be identified by its orthogonal projection $\mathcal{V} = \{x = (x_{1},\ldots,x_{n-1});~(x,\sqrt{1 - \vert x \vert^{2}}) \in \bar{\Omega}\}$. Suppose that $\rho$ is a function of $x$ with $x \in \mathcal{V}$. For convenience, we may assume that $\rho \in C^{2}(\bar{\Omega})$.

Let $Y$ be the refracted direction of the ray $X$ by the surface $\rho(X)X$, then from Snell law (\ref{2.3}), we have
\begin{equation}\label{5.1}
  Y = \frac{1}{\kappa}(X - \Phi(x \cdot \nu)\nu),
\end{equation}
where $\Phi$ is given by (\ref{2.5}) and $\nu$ is the outward unit normal to the refractor at $\rho(X)X$.

Define a map $T:\mathcal{V}\rightarrow \bar{\Omega}^{\ast}:X \mapsto Y$, then the Jacobian matrix of $T$ is given by
\begin{equation*}
  \left(
  \begin{array}{cccc}
    \partial_{1} y_{1} & \ldots & \partial_{n-1} y_{1} & y_{1} \\
    \partial_{1} y_{2} & \ldots & \partial_{n-1} y_{2} & y_{2} \\
    \vdots & \ddots & \vdots & \vdots \\
    \partial_{1} y_{n-1} & \ldots & \partial_{n-1} y_{n-1} & y_{n-1} \\
    \partial_{1} y_{n} & \ldots & \partial_{n-1} y_{n} & y_{n}
  \end{array}
  \right).
\end{equation*}
Then we have
\begin{equation}\label{5.2}
  \det J = \frac{1}{y_{n}} \det Dy,
\end{equation}
where
\begin{equation*}
  Dy =
  \left(
  \begin{array}{ccc}
    \partial_{1} y_{1} & \ldots & \partial_{n-1} y_{1}  \\
    \partial_{1} y_{2} & \ldots & \partial_{n-1} y_{2} \\
    \vdots & \ddots & \vdots  \\
    \partial_{1} y_{n-1} & \ldots & \partial_{n-1} y_{n-1}
  \end{array}
  \right).
\end{equation*}

Let $dS_{\bar{\Omega}^{\ast}}$ be the area elements corresponding to $\bar{\Omega}^{\ast}$, $dS_{\mathcal{V}}$ be the volume element corresponding to $\mathcal{V}$, then
\begin{equation}\label{5.3}
  \vert \det J \vert = \frac{dS_{\bar{\Omega}^{\ast}}}{dS_{\mathcal{V}}}.
\end{equation}
Similarly, if $dS_{\bar{\Omega}}$ denotes the area elements corresponding to $\bar{\Omega}$, then
\begin{equation}\label{5.4}
  \frac{dS_{\bar{\Omega}}}{dS_{\mathcal{V}}} = \frac{1}{\sqrt{1 - \vert x \vert^{2}}}.
\end{equation}

Suppose that $x_{0} \notin E$, where $E$ is the singular point set of $\Gamma$, and $m_{0} = \mathcal{T}_{\Gamma}(x_{0}) = T(x_{0})$. 
Combing~(\ref{5.3}), (\ref{5.4}) and Lebesgue differentiation theorem with the following energy condition
$$\int_{\mathcal{T}_{\Gamma}(\omega)}f(x)t_{\Gamma}(x)dx \geq \int_{\omega}g(m)dm,$$
we obtain the Jacobian equation
\begin{equation}\label{5.5}
   \vert \det J \vert = \frac{dS_{\bar{\Omega}^{\ast}}}{dS_{\mathcal{V}}} \leq \frac{f(x)t_{\Gamma}(x)}{\sqrt{1 - \vert x \vert^{2}}g(T(x))}.
\end{equation}

Next, we derive the inequality involving a Monge-Amp\`ere type operator satisfied by $\rho$.

\begin{theorem}
  Suppose a refractor $\Gamma$ is defined by $\rho$, and $\rho$ is the weak solution to the refractor problem in negative refractive index material with loss of energy with emitting illumination intensity $f \in L^{1}(\bar{\Omega})$ and prescribed refracted illumination intensity $g \in L^{1}(\bar{\Omega}^{\ast})$, then we have
  \begin{equation}\label{5.6}
    \vert \det (D^{2}\rho + C^{-1}B) \vert \leq \frac{f(x)t_{\Gamma}(x)\vert \kappa \vert^{n - 2} \omega}{g(T(x))h^{n - 1}\left(1 - h^{-1}\left(\dfrac{\rho}{1 - \vert x \vert^{2}}x - D\rho\right)\cdot D_{p}h\right)},
  \end{equation}
  where $C^{-1}$ is given in (\ref{5.25}), $B$ is given by (\ref{5.23}), $h$ and $\omega$ are defined in (\ref{5.14}) and (\ref{5.15}) correspondingly.
  \label{thm5.1}
\end{theorem}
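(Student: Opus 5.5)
The idea is to start from the Jacobian inequality (\ref{5.5}) and express $\det Dy$, hence $\det J=\frac{1}{y_n}\det Dy$ by (\ref{5.2}), explicitly in terms of $\rho$, $D\rho$ and $D^{2}\rho$ on the projected domain $\mathcal{V}$. This follows the computation in \cite{GM13} for the positive index case. The only place the sign of $\kappa$ enters is the factor $1/\kappa$ in (\ref{5.1}), which will produce the $|\kappa|^{n-2}$ in (\ref{5.6}).

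\textbf{Step 1: the normal in local coordinates.} Writing $X=(x,\sqrt{1-|x|^{2}})$ and differentiating $\rho(x)X$ in $x_i$ gives the tangent vectors. From these, the unit normal is
\begin{equation*}
\nu=\frac{\rho X-\sqrt{1-|x|^{2}}\,(D\rho,0)+\text{(correction in the last slot)}}{\omega},
\end{equation*}
where $\omega$ is its Euclidean length, a function of $x,\rho,D\rho$. This $\omega$ is the quantity (\ref{5.15}). In particular, $X\cdot\nu$ is an explicit function of $(x,\rho,D\rho)$.

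\textbf{Step 2: the map $T$ and its first $n-1$ components.} Substituting into (\ref{5.1}) gives
\begin{equation*}
y_i=\frac{1}{\kappa}\bigl(x_i-\Phi(X\cdot\nu)\nu_i\bigr).
\end{equation*}
I would group the result as
\begin{equation*}
y'=\frac{1}{\kappa}\Bigl(x-\frac{\Phi}{\omega}\,\bigl(\tfrac{\rho}{1-|x|^{2}}x-D\rho\bigr)\cdot(\text{factor})\Bigr),
\end{equation*}
and set $p=\frac{\rho}{1-|x|^{2}}x-D\rho$, writing $y'=\frac1\kappa\,\bigl(x-h(x,\rho,p)^{-1}\,\cdots\bigr)$. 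Here $h$ collects $\omega/\Phi$; this is the definition (\ref{5.14}).

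Differentiating gives
\begin{equation*}
D y'=\frac{1}{\kappa}\,\frac{1}{h}\Bigl(I-h^{-1}p\otimes D_p h\Bigr)\bigl(D^{2}\rho+C^{-1}B\bigr)
\end{equation*}
up to a sign. All terms not involving $D^2\rho$ go into $B$, and the matrix in front, after the rank-one factor is split off, is $C$. Taking determinants:
\begin{itemize}
\item the matrix determinant lemma gives $\det(I-h^{-1}p\otimes D_ph)=1-h^{-1}p\cdot D_ph$;
\item the scalar prefactor gives $|\kappa|^{-(n-1)}h^{-(n-1)}$.
\end{itemize}
Combining this with $y_n$ (expressed via $\kappa$ and $\omega$, which yields the remaining power $|\kappa|$ and the factor $\omega$) and with $\sqrt{1-|x|^{2}}$ from (\ref{5.4}) then rearranges (\ref{5.5}) into (\ref{5.6}).

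\textbf{Main obstacle.} The main difficulty is the bookkeeping in the differentiation of $y'$. Because $h$ depends on $x$, $\rho$ and $p$, and $p$ itself involves $D\rho$, the chain rule produces the rank-one term. The $D^{2}\rho$ terms must be isolated cleanly so that the remainder can be written as $C^{-1}B$ with $C$ invertible. I would follow the \cite{GM13} computation line by line, replacing $\kappa>0$ by $\kappa<0$ and tracking the absolute values. I would also check that $1-h^{-1}p\cdot D_ph\neq 0$, which should follow from the non-total-reflection conditions (\ref{2.6})--(\ref{2.7}) keeping $\Phi$ bounded away from zero.
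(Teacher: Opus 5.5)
Your outline follows the paper's skeleton: normal, then $Y$ from (\ref{5.1}), then $Dy=\frac1\kappa(B+C\,D^2\rho)$, then $\det C$, then combining (\ref{5.2}), (\ref{5.5}) and (\ref{5.17}). However, the objects you construct are not the ones in (\ref{5.6}), and your key factorization fails.

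\textbf{The identifications of $\omega$, $h$, $p$ and $y'$.} Put $N=\sqrt{\rho^2+|D\rho|^2-(x\cdot D\rho)^2}$. The unnormalized outer normal is $(\rho+D\rho\cdot x)X-(D\rho,0)$, its length is $N$, and $X\cdot\nu=\rho/N$. So (\ref{5.1}) gives
\[
Y=\frac1\kappa\bigl(\omega X+h\,(D\rho,0)\bigr),\qquad h=\frac{\Phi(\rho/N)}{N},\qquad \omega=1-h\,(\rho+D\rho\cdot x).
\]
From this:
\begin{itemize}
\item $\omega$ is not the length of the normal; $N$ sits inside $h$.
\item $h$ is $\Phi/N$, not the reciprocal you describe.
\item The variable $p$ in $h(x,z,p)$ is the slot for $D\rho$. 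That is why $D_ph$ is exactly what the chain rule produces when $h(x,\rho,D\rho)$ is differentiated.
\item The factor $\omega$ in (\ref{5.6}) enters because $y_n=\frac1\kappa\,\omega\sqrt{1-|x|^2}$. This identity is false for your $\omega$.
\item $y'=\frac1\kappa(\omega x+hD\rho)$. It is not $x$ minus a multiple of $\frac{\rho}{1-|x|^2}x-D\rho$, because that vector is in general not parallel to $(\rho+D\rho\cdot x)x-D\rho$.
\end{itemize}

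\textbf{The determinant step.} This is the more serious problem. Using $D_p\omega=-(z+p\cdot x)D_ph-hx$, the coefficient of $D^2\rho$ in $Dy$ is $\frac1\kappa C$ with
\[
C=h\,(I-x\otimes x)-\bigl[(\rho+D\rho\cdot x)x-D\rho\bigr]\otimes D_ph .
\]
This is $hI$ plus a rank-two matrix. Your formula $\frac1h(I-h^{-1}p\otimes D_ph)$ for the prefactor is therefore wrong, and the rank-one determinant lemma does not apply directly.

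The paper first factors out $M_2=I-x\otimes x$, writing $C=hM_2(I+M_2^{-1}M_1)$. Here $M_2^{-1}=I+\frac{x\otimes x}{1-|x|^2}$, and
\[
M_2^{-1}\bigl[(\rho+D\rho\cdot x)x-D\rho\bigr]=\frac{\rho}{1-|x|^2}x-D\rho .
\]
This is the actual origin of the vector in the denominator of (\ref{5.6}). The same step produces $\det M_2=1-|x|^2$, which gives (\ref{5.28}). Your factorization loses this factor.

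Be aware that the printed (\ref{5.6}) does not display the $1-|x|^2$ from (\ref{5.28}) either. So landing on (\ref{5.6}) does not confirm your factorization.

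\textbf{Nonvanishing of the bracket.} The condition $1-h^{-1}\bigl(\frac{\rho}{1-|x|^2}x-D\rho\bigr)\cdot D_ph\neq0$ does not follow from $\Phi$ being bounded away from zero, since it involves $\Phi'$ and the derivatives of $N$. The paper simply assumes tacitly that $C$ is invertible.
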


\begin{proof}
  In order to prove (\ref{5.6}), we first need to derive the unit outer normal $\nu$ to the surface $\Gamma$ at $\rho(X)X$.

  Write $\nu = (\nu',\nu_{n})$, for $\partial_{x_{k}}((x,x_{n})\rho(x))$ are tangential to the graph of the refractor $\Gamma$ for $k = 1,2,\ldots,n-1$, then we have
  $$\partial_{x_{k}}((x,x_{n})\rho(x))\cdot \nu = 0$$
  for $k = 1,2,\ldots,n-1$. Hence we have
  \begin{equation}\label{5.7}
    \rho \sum_{i = 1}^{n-1}\delta_{ik}\nu_{i} + \partial_{x_{k}}\rho \sum_{i = 1}^{n-1}x_{i}\nu_{i} = \left(\rho \frac{x_{k}}{\sqrt{1 - \vert x \vert^{2}}} - \sqrt{1 - \vert x \vert^{2}}\partial_{x_{k}}\rho\right)
  \end{equation}
  for $k = 1,2,\ldots,n-1$. Using the tensor product, (\ref{5.7}) can be written in matrix form
  \begin{equation}\label{5.8}
    (\rho I + D\rho \otimes x)(\nu')^{T} = \left(\rho \frac{x^{T}}{\sqrt{1 - \vert x \vert^{2}}} - \sqrt{1 - \vert x \vert^{2}}(D\rho)^{T}\right)\nu_{n}.
  \end{equation}
  According to Shermann-Morrison formula, we have
  \begin{equation*}
    (\rho I + D\rho \otimes x)^{-1} = \rho^{-1}\left(I - \frac{D\rho \otimes x}{\rho + D\rho \cdot x}\right).
  \end{equation*}
  Notice that for any row vectors $a,b$ and $c$, we have $(a \otimes b)c^{T} = (b \cdot c)a^{T}$, then we have
  \begin{align*}
    (\nu')^{T} & = \rho^{-1}\left(I - \frac{D\rho \otimes x}{\rho + D\rho \cdot x}\right)\left(\rho \frac{x^{T}}{\sqrt{1 - \vert x \vert^{2}}} - \sqrt{1 - \vert x \vert^{2}}(D\rho)^{T}\right)\nu_{n} \\
     & = \rho^{-1}\left(\frac{\rho}{\sqrt{1 - \vert x \vert^{2}}}x^{T} - \sqrt{1 - \vert x \vert^{2}}(D\rho)^{T} - \frac{\rho}{\sqrt{1 - \vert x \vert^{2}}(\rho + D\rho \cdot x)}(D\rho \otimes x)x^{T}\right. \\
     & \quad \left. + \frac{\sqrt{1 - \vert x \vert^{2}}}{\rho + D\rho \cdot x}(D\rho \otimes x)(D\rho)^{T}\right)\nu_{n} \\
     & = \rho^{-1}\left(\frac{\rho}{\sqrt{1 - \vert x \vert^{2}}}x^{T} - (\sqrt{1 - \vert x \vert^{2}} + \frac{\vert x \vert^{2}\rho}{\sqrt{1 - \vert x \vert^{2}}(\rho + D\rho \cdot x)}\right. \\
     & \quad \left. - \frac{\sqrt{1 - \vert x \vert^{2}}}{\rho + D\rho \cdot x}(x \cdot D\rho))(D\rho)^{T}\right)\nu_{n} \\
     & = \frac{1}{\sqrt{1 - \vert x \vert^{2}}}\left(x^{T} - \frac{1}{\rho + D\rho \cdot x}(D\rho)^{T}\right)\nu_{n}.
  \end{align*}
  Hence we have
  \begin{equation}\label{5.9}
    \nu = \left(\frac{1}{\sqrt{1 - \vert x \vert^{2}}}\left(x - \frac{1}{\rho + D\rho \cdot x}D\rho\right),1\right)\nu_{n}.
  \end{equation}
  From (\ref{5.9}), we have
  \begin{equation}\label{5.10}
    X\cdot \nu = \frac{1}{\sqrt{1 - \vert x \vert^{2}}}\left(\frac{\rho}{\rho + D\rho \cdot x}\right)\nu_{n}.
  \end{equation}
  Since $\nu$ is unit outer normal to $\Gamma$ at $\rho(X)X$, then we have $X\cdot \nu \geq 0$ and $\vert \nu' \vert^{2} + \nu_{n}^{2} = 1$, then from (\ref{5.9}), we have
  \begin{equation}\label{5.11}
    \left(\frac{\rho^{2} - (x \cdot D\rho)^{2} + \vert D\rho \vert^{2}}{(1 - \vert x \vert^{2})(\rho + D\rho \cdot x)^{2}}\right)\nu_{n}^{2} = 1.
  \end{equation}
  According to (\ref{5.11}), we obtain
  \begin{equation}\label{5.12}
    \nu_{n} = \pm \vert \rho + D\rho \cdot x \vert\sqrt{\frac{1-\vert x \vert^{2}}{\rho^{2} - (x \cdot D\rho)^{2} + \vert D\rho \vert^{2}}}.
  \end{equation}
  Then from (\ref{5.9}), we get
  \begin{equation}\label{5.13}
    \nu = \pm \frac{\vert \rho + D\rho \cdot x \vert}{\rho + D\rho \cdot x}\left(\frac{-D\rho +\rho + D\rho \cdot x}{\sqrt{\rho^{2} - (x \cdot D\rho)^{2} + \vert D\rho \vert^{2}}}x,\frac{\sqrt{1 - \vert x \vert^{2}}(\rho + D\rho \cdot x)}{\sqrt{\rho^{2} - (x \cdot D\rho)^{2} + \vert D\rho \vert^{2}}}\right).
  \end{equation}
  Besides, from (\ref{5.13}), (\ref{5.10}) can be written as
  \begin{equation*}
    X\cdot \nu = \pm \frac{\vert \rho + D\rho \cdot x \vert}{\rho + D\rho \cdot x}\frac{\rho}{\sqrt{\rho^{2} - (x \cdot D\rho)^{2} + \vert D\rho \vert^{2}}}.
  \end{equation*}

  Now we can derive the inequality involving a Monge-Amp\`ere type operator satisfied by $\rho$. For simplicity, we introduce two functions:
  \begin{equation}\label{5.14}
    h(x,z,p) = \frac{\Phi\left(\dfrac{z}{\sqrt{z^{2}+\vert p \vert^{2}-(p \cdot x)^{2}}}\right)}{\sqrt{z^{2}+\vert p \vert^{2}-(p \cdot x)^{2}}}
  \end{equation}
  and
  \begin{equation}\label{5.15}
   \omega(x,z,p) = 1 - h(x,z,p)(z + p \cdot x),
  \end{equation}
  where $\Phi$ is defined in (\ref{2.5}). Then from (\ref{5.1}), we have
  \begin{equation}\label{5.16}
    y_{i} = \frac{1}{\kappa}\left[\omega(x,\rho(x),D\rho(x))x_{i} + h(x,\rho(x),D\rho(x))\rho_{x_{i}}\right], \quad 1 \leq i \leq n-1,
  \end{equation}
  and
  \begin{equation}\label{5.17}
    y_{n} = \frac{1}{\kappa} \omega(x,\rho(x),D\rho(x)) \sqrt{1 - \vert x \vert^{2}}.
  \end{equation}
  For $1 \leq i,j \leq n-1$, differentiating $y_{i}$ with respect to $x_{j}$, we have
  \begin{equation}\label{5.18}
    \begin{aligned}
    \partial_{j}y_{i} & = \frac{1}{\kappa}\left[\omega \delta_{ij} + x_{i}(\omega_{x_{j}} + \omega_{z} \rho_{x_{j}} + \sum_{k = 1}^{n-1}\omega_{p_{k}} \rho_{x_{k}x_{j}})\right.  
      \left. + h \rho_{x_{i}x_{j}} + \rho_{x_{i}}(h_{x_{j}} + h_{z} \rho_{x_{j}} + \sum_{k=1}^{n-1} h_{p_{k}} \rho_{x_{k}x_{j}})\right].
    \end{aligned}
  \end{equation}
  For $x, D\rho, D_{x}\omega, D_{p}\omega, D_{z}\omega, D_{z}\omega, D_{x}h$ and $D_{p}h$ are row vectors, then (\ref{5.18}) can be written in matrix form
  \begin{equation}\label{5.19}
    \begin{aligned}
    Dy & = \frac{1}{\kappa}[\omega I + x \otimes D_{x}\omega + \omega_{z}x \otimes D\rho + D\rho \otimes D_{x}h  \\
    & \quad + h_{z}D\rho \otimes D\rho + (x \otimes D_{p}\omega)D^{2}\rho + hD^{2}\rho + (D\rho \otimes D_{p}h)D^{2}\rho].
    \end{aligned}
  \end{equation}
  Let
  \begin{equation}\label{5.20}
    B(x) = \omega I + x \otimes D_{x}\omega + \omega_{z}x \otimes D\rho + D\rho \otimes D_{x}h + h_{z}D\rho \otimes D\rho,
  \end{equation}
  and
  \begin{equation}\label{5.21}
    C(x) = x \otimes D_{p}\omega + hI + D\rho \otimes D_{p}h.
  \end{equation}
  Then (\ref{5.19}) can be written as
  \begin{equation}\label{5.22}
    Dy = \frac{1}{\kappa}[B(x) + C(x)D^{2}\rho].
  \end{equation}
  From (\ref{5.15}), we have $D_{x}\omega = -D_{x}h -hp$, $\omega_{z} = -h_{z}(z + p \cdot x) - h$ and $D_{p}\omega = -D_{p}h(z + p \cdot x) - hx$, then we have
  \begin{equation}\label{5.23}
    \begin{aligned}
    B(x) & = [1 - (\rho + D\rho \cdot x)h]I - [(\rho + D\rho \cdot x)x - D\rho]\otimes D_{x}h  \\
    & \quad - x\otimes [(2h + h_{z}((\rho + D\rho \cdot x)))D\rho] + h_{z}D\rho \otimes D\rho,
    \end{aligned}
  \end{equation}
  and
  \begin{equation}\label{5.24}
    \begin{aligned}
    C(x) & = [(-(\rho + D\rho \cdot x)x + D\rho) \otimes D_{p}h] - h[(x \otimes x) - I] \\
    & = h[(h^{-1}(-(\rho + D\rho \cdot x)x + D\rho)\otimes D_{p}h) + (((-x) \otimes x) + I)] \\
    & = h(M_{1} + M_{2}) \\
    & = h M_{2}(I + M_{2}^{-1}M_{1}),
    \end{aligned}
  \end{equation}
  where
  \begin{equation*}
    M_{1} = -h^{-1}((\rho + D\rho \cdot x)x - D\rho) \otimes D_{p}h \quad \text{and} \quad M_{2} = ((-x)\otimes x) + I.
  \end{equation*}
  From Shermann-Morrison formula, we have
 $
    M_{2}^{-1} = I + \frac{x \otimes x}{1 - \vert x \vert^{2}},
 $
  and
 $
    C^{-1} = \frac{1}{h}(I + M_{2}^{-1}M_{1})M_{2}^{-1}.
  $
  We may assume that $v = h^{-1}[(\rho + D\rho \cdot x)x - D\rho]$, then $I + M_{2}^{-1}M_{1} = I + (-M_{2}^{-1}v^{T})D_{p}h$, then we have
  \begin{equation*}
    N := (I + M_{2}^{-1}M_{1})^{-1} = I + \frac{(-M_{2}^{-1}v^{T})D_{p}h}{1 - (-M_{2}^{-1}v^{T})^{T}\cdot D_{p}h},
  \end{equation*}
  hence
  \begin{equation}\label{5.25}
    C^{-1} = \frac{1}{h}N\left(I + \frac{x \otimes x}{1 - \vert x \vert^{2}}\right).
  \end{equation}
  Now we calculate the matrix $N$ accurately. We have
  \begin{align*}
    M_{2}^{-1}v^{T} & = v^{T} + \frac{1}{1 - \vert x \vert^{2}} x^{T}xv^{T} \\
      & = h^{-1}\left[(\rho + D\rho \cdot x)x^{T} - (D\rho)^{T} + \frac{1}{1 - \vert x \vert^{2}}(\vert x \vert^{2}(\rho + D\rho \cdot x)x^{T} - (D\rho \cdot x)x^{T})\right] \\
      & = h^{-1}\left[\frac{\rho}{1 - \vert x \vert^{2}}x^{T} - (D\rho)^{T}\right].
  \end{align*}
  Hence, we have
  \begin{equation}\label{5.26}
    N = I + \frac{h^{-1}\left(\dfrac{\rho}{1 - \vert x \vert^{2}}x - D\rho\right)\otimes D_{p}h}{1 - h^{-1}\left(\dfrac{\rho}{1 - \vert x \vert^{2}}x - D\rho\right)\cdot D_{p}h},
  \end{equation}
  again from Shermann-Morrison formula, we obtain
    \begin{equation}\label{5.27}
    \det N = \frac{1}{1 - h^{-1}\left(\dfrac{\rho}{1 - \vert x \vert^{2}}x - D\rho\right)\cdot D_{p}h}.
  \end{equation}
  Combining (\ref{5.25}) with (\ref{5.27}), we have
    \begin{equation}\label{5.28}
    \det C = \frac{1}{\det C^{-1}} = h^{n-1}(1 - \vert x \vert^{2})\left[1 - h^{-1}\left(\frac{\rho}{1 - \vert x \vert^{2}}x - D\rho\right)\cdot D_{p}h\right].
  \end{equation}
  Substituting (\ref{5.26}) into (\ref{5.25}), we have
    \begin{align*}
    C^{-1} & = \frac{1}{h}\left[I + \frac{x\otimes x}{1 - \vert x \vert^{2}} + \frac{(M_{2}^{-1}v^{T})D_{p}h}{1 - (M_{2}^{-1}v^{T})^{T}\cdot D_{p}h} + \left(\frac{(M_{2}^{-1}v^{T})D_{p}h}{1 - (M_{2}^{-1}v^{T})^{T}\cdot D_{p}h}\right)\left(\frac{x \otimes x}{1 - \vert x \vert^{2}}\right)\right] \\
      & = \frac{1}{h}\left[I + \frac{x\otimes x}{1 - \vert x \vert^{2}} + \frac{1}{1 - (M_{2}^{-1}v^{T})^{T}\cdot D_{p}h}(M_{2}^{-1}v^{T})\left(D_{p}h + \frac{x \cdot D_{p}h}{1 - \vert x \vert^{2}}x\right)\right] \\
      & = \frac{1}{h}\left[I + \frac{x\otimes x}{1 - \vert x \vert^{2}}\right.  \\
      & \quad \left. + \frac{1}{1 - (M_{2}^{-1}v^{T})^{T}\cdot D_{p}h}\left(h^{-1}\left(\frac{\rho}{1 - \vert x \vert^{2}}x^{T} - (D\rho)^{T}\right)\left(D_{p}h + \frac{x \cdot D_{p}h}{1 - \vert x \vert^{2}}x\right)\right)\right],
  \end{align*}
  where we have used the fact that for the row vectors $a,b,c,d$, $(a \otimes b)(c \otimes d) = (b \cdot c)(a \otimes d)$. Denoting that
  \begin{equation*}
    A = \frac{1}{1 - (M_{2}^{-1}v^{T})^{T}\cdot D_{p}h}\left(h^{-1}\left(\frac{\rho}{1 - \vert x \vert^{2}}x^{T} - (D\rho)^{T}\right)\left(D_{p}h + \frac{x \cdot D_{p}h}{1 - \vert x \vert^{2}}x\right)\right),
  \end{equation*}
  then from
  \begin{equation*}
    (M_{2}^{-1}v^{T})^{T}\cdot D_{p}h = h^{-1}\left[\frac{\rho}{1 - \vert x \vert^{2}}(x \cdot D_{p}h) - D\rho \cdot D_{p}h\right],
  \end{equation*}
  we have
  \begin{equation*}
    A = \frac{1}{h - \left[\dfrac{\rho}{1 - \vert x \vert^{2}}(x \cdot D_{p}h) - D\rho \cdot D_{p}h\right]}\left(\frac{\rho}{1 - \vert x \vert^{2}}x - D\rho\right)\otimes \left(D_{p}h + \frac{x \cdot D_{p}h}{1 - \vert x \vert^{2}}x\right),
  \end{equation*}
  so (\ref{5.25}) can be written as
  \begin{equation*}
    C^{-1} = \frac{1}{h}\left[I + \frac{x \otimes x}{1 - \vert x \vert^{2}} + A\right].
  \end{equation*}

  From (\ref{5.22}), we have
  \begin{equation*}
    Dy = \frac{1}{\kappa}C(C^{-1}B + D^{2}\rho),
  \end{equation*}
  hence
  \begin{equation}\label{5.29}
    \det Dy = \frac{1}{\kappa^{n-1}} \det C \det (C^{-1}B + D^{2}\rho).
  \end{equation}
  Combining (\ref{5.2}), (\ref{5.5}), (\ref{5.17}) and (\ref{5.25}), we have
  \begin{equation*}
    \vert \frac{1}{\kappa^{n-1}} \det C \det (C^{-1}B + D^{2}\rho) \vert = \frac{f(x)t_{\Gamma}(x)\omega}{\kappa g(T(x))},
  \end{equation*}
  then from (\ref{5.28}), we finally obtain
  \begin{equation*}
    \vert \det (D^{2}\rho + C^{-1}B) \vert \leq \frac{f(x)t_{\Gamma}(x)\vert \kappa \vert^{n - 2} \omega}{g(T(x))h^{n - 1}\left(1 - h^{-1}\left(\dfrac{\rho}{1 - \vert x \vert^{2}}x - D\rho\right)\cdot D_{p}h\right)}.
  \end{equation*}

\end{proof}

\section{Conclusions}\label{Section 6}

\sloppy{}

In this paper, we studied the far field refractor problem with loss of energy in negative refractive index material. We first recalled the Snell law in vector form and derived the Fresnel formula in negative refractive index material. Then we proved the existence of the weak solution of the refraction problem with loss of energy in both the cases $\kappa < -1$ and $-1 < \kappa <0$. Finally, the inequality involving a Monge-Amp\`ere type operator satisfied by $\rho$ is derived. The conclusion of the existence of weak solutions is similar to that in \cite{GM13}. However, since $\kappa$ is negative in this paper, the process of the proof is different. Especially, for $\kappa < 0$, when proving the boundedness of some parameters, we use a different method from that in \cite{GM13} for the scaling of inequalities. For the inequality involving a Monge-Amp\`ere type operator satisfied by $\rho$, the form of the Eq (\ref{5.6}) is similar to the inequality (8.15) of the positive refractive index in \cite{GM13}, while the formula of $\omega$ is different. Besides, for $\kappa <0$, we need to take the absolute value of $\kappa$ in (\ref{5.6}). If we do not take the absolute value of $\kappa$, then the right side of (\ref{5.6}) becomes negative and the Monge-Amp\`ere type operator in (\ref{5.6}) is not elliptic, hence it is meaningless. Meanwhile, if we do not consider the loss of energy, then the energy condition is
\begin{equation*}
  \int_{\mathcal{T}_{\Gamma}(\omega)}f(x)dx = \int_{\omega}g(m)dm,
\end{equation*}
hence the Monge-Amp\`ere type equation satisfied by $\rho$ can be written as
\begin{equation*}
  \vert \det (D^{2}\rho + C^{-1}B) \vert = \frac{f(x)\vert \kappa \vert^{n - 2} \omega}{g(T(x))h^{n - 1}\left(1 - h^{-1}\left(\dfrac{\rho}{1 - \vert x \vert^{2}}x - D\rho\right)\cdot D_{p}h\right)}.
\end{equation*}
In fact, the inequality (\ref{5.6}) we have derived is universal. If we do not consider the loss of energy in negative refractive index material, we only need to take $t_{\Gamma}(x) = 1$ in (\ref{5.6}) and change the unequal sign to the equal sign, and if we consider the refraction problem in positive refractive index material with loss of energy, we only need to change the formula of $\omega$ and remove the absolute value of $\kappa$.

This paper used Minkowski method to solve the far field refractor problem with loss of energy in negative refractive index material, which is a remaining problem in \cite{St17}. Minkowski method is effective in solving the refractor problem with loss of energy. However, can the optimal transportation method be used to solve the refractor problem with loss of energy is still an open problem. Besides, in order to prove the existence of the weak solution of the far field refraction problem in negative refractive index material with loss of energy, we take $\varepsilon>0$ to strengthen Lemma \ref{lem2.1} to Remark \ref{rem2.1}. However, can we take $\varepsilon = 0$ in Remark \ref{rem2.1} is still an open problem as well.

\vspace{3mm}

\noindent {\bf Conflict of Interest}  { The authors declare no conflict of interest.}

\vspace{3mm}

\noindent {\bf Acknowledgments}  { The authors sincerely thank the anonymous referees for their valuable and insightful comments.
This work was supported by the National Natural Science Foundation of China (No. 12271093), the Jiangsu Provincial Scientific Research Center of Applied Mathematics (No. BK20233002), the Start-up Research Fund of Southeast University (No. 4007012503) and Shanghai Institute for Mathematics and Interdisciplinary Sciences (SIMIS) under grant number SIMIS-ID-2025-AD.}

\end{document}